\DeclareMathOperator{\oD}{d}
\def\D{\oD\!}
\def\dif{\,\D}
\def\R{{\mathbf R}}
\def\N{{\mathbf N}}
\def\car#1{{\mathbf 1}}
\newcommand{\esp}[1]{{\mathbf E}\left[#1\right]}
\newcommand{\espi}[2]{{\mathbf E}_{#1}\left[{#2}\right]}
\def\P{\mathbf P}
\def\Q{\mathbf Q}
\def\F{{\mathcal F}}
\def\/{\,|\,} 
\def\NN{\mathfrak N} 
\def\Id{\operatorname{Id}}
\def\indic{\mathbf 1}
\def\L{\, |\,}
\def\car{\mathbf 1}
\def\pred{\mathcal{P}}
\def\m{\mathfrak m}
\def\L{\operatorname{L}}
\def\Yfrak{\mathfrak Y}
\def\Zfrak{\mathfrak Z}
\def\MM{\mathfrak M^1}
\def\pc{\operatorname{pc}}
\def\Ent{H}
\def\hawkes{\text{g-H}_y}
  \def\Ymap{\mathbf Y}
  \def\Zmap{\mathbf Z}
 \definecolor{fonce}{HTML}{415458}
 \definecolor{cadet}{HTML}{5D737E}
 \definecolor{vertdeau}{HTML}{7E795D}
 \definecolor{ocre}{HTML}{58414F}
 \definecolor{creme}{HTML}{7E5D72}
\newmdenv[skipabove=7pt,
skipbelow=7pt,
rightline=false,
leftline=true,
topline=false,
bottomline=false,
backgroundcolor=ocre!10,
linecolor=ocre,
innerleftmargin=5pt,
innerrightmargin=20pt,
innertopmargin=5pt,
innerbottommargin=5pt,
leftmargin=0cm,
rightmargin=0cm,
linewidth=4pt]{eBox}
\newtheoremstyle{ocrenumbox}
{0pt}
{0pt}
{\itshape}
{}
{\small\bfseries\sffamily\color{cadet}}
{\;}
{0.25em}
{\small\sffamily\color{ocre}\thmname{#1}\nobreakspace\thmnumber{\@ifnotempty{#1}{}\@upn{#2}}
\thmnote{\nobreakspace\the\thm@notefont\sffamily\bfseries\color{black}---\nobreakspace#3.}} 
\newtheoremstyle{blacknumex}
{5pt}
{5pt}
{\normalfont}
{} 
{\small\bfseries\sffamily}
{\;}
{0.25em}
{\small\sffamily{\tiny\ensuremath{\blacksquare}}\nobreakspace\thmname{#1}\nobreakspace\thmnumber{\@ifnotempty{#1}{}\@upn{#2}}
\thmnote{\nobreakspace\the\thm@notefont\sffamily\bfseries---\nobreakspace#3.}}
\newtheoremstyle{blacknumbox} 
{0pt}
{0pt}
{\small}
{}
{\small\bfseries\sffamily}
{~}
{0.25em}
{\small\sffamily\thmname{#1}\nobreakspace\thmnumber{\@ifnotempty{#1}{}\@upn{#2}}
\thmnote{\nobreakspace\the\thm@notefont\sffamily\bfseries---\nobreakspace#3.}}
\newtheoremstyle{ocrenum}
{5pt}
{5pt}
{\normalfont}
{}
{\small\bfseries\sffamily\color{ocre}}
{\;}
{0.25em}
{\small\sffamily\color{ocre}\thmname{#1}\nobreakspace\thmnumber{\@ifnotempty{#1}{}\@upn{#2}}
\thmnote{\nobreakspace\the\thm@notefont\sffamily\bfseries\color{black}---\nobreakspace#3.}} 
\newmdenv[skipabove=7pt,
skipbelow=7pt,
rightline=false,
leftline=true,
topline=false,
backgroundcolor=ocre!10,
bottomline=false,
linecolor=ocre,
innerleftmargin=5pt,
innerrightmargin=20pt,
innertopmargin=5pt,
innerbottommargin=5pt,
leftmargin=0cm,
rightmargin=0cm,
linewidth=4pt]{dBox}
\newmdenv[skipabove=7pt,
skipbelow=7pt,
skipbelow=7pt,
backgroundcolor=fonce!10,
rightline=false,
leftline=true,
bottomline=false,
topline=false,
linewidth=4pt,
linecolor=fonce,
innerleftmargin=5pt,
innerrightmargin=20pt,
innertopmargin=5pt,
leftmargin=0cm,
rightmargin=0cm,
innerbottommargin=5pt]{tBox}
\theoremstyle{ocrenumbox}
\newtheorem{theoremeT}{Theorem}
\newtheorem{lemmaT}[theoremeT]{Lemma}
\newtheorem{corollaryT}[theoremeT]{Corollary}
\theoremstyle{blacknumex}
\newtheorem{exampleT}{Example}
\newtheorem{remarkT}{Remark}
\theoremstyle{blacknumbox}
\newtheorem{definitionT}{Definition}
\newenvironment{theorem}{\begin{tBox}\begin{theoremeT}}{\end{theoremeT}\end{tBox}}
\newenvironment{lemma}{\begin{tBox}\begin{lemmaT}}{\end{lemmaT}\end{tBox}}
\newenvironment{definition}{\begin{dBox}\begin{definitionT}}{\end{definitionT}\end{dBox}}
\newenvironment{remark}{\begin{remarkT}}{\hfill{\tiny\ensuremath{\blacksquare}}\end{remarkT}}
\newenvironment{corollary}{\begin{tBox}\begin{corollaryT}}{\end{corollaryT}\end{tBox}}
\numberwithin{theoremeT}{section}
\numberwithin{definitionT}{section}
\def\ZZ{\Upsilon}
\title{Invertibility of functionals of the Poisson process and applications}
\author{L. Coutin \and L. Decreusefond}
\date{2022}
\begin{document}
\maketitle{}
\begin{abstract}
  We show that
  solving SDEs with constant volatility on the Wiener space is the analog of
  constructing  Hawkes-like processes, i.e. self excited point process, on the
  Poisson space. Actually,  both problems are linked to the invertibility of some
  transformations on the sample paths which respect absolute continuity: adding
  an adapted drift for the Wiener space,
  making a random time change for the Poisson space.  Following previous
  investigations by Üstünel on the Wiener space, we establish an entropic criterion on the Poisson
  space which ensures the invertibility of such a transformation. As a
  consequence of this criterion, we improve the variational representation of
  the entropy with respect to the Poisson process distribution. Pursuing the
  Wiener-Poisson analogy so established, we define several notions  of
  generalized Hawkes processes as weak or strong solutions of some fixed point
  equations and show  a Yamada-Watanabe like theorem for these new equations. As
  a consequence, we find another construction of the classical (even non linear)
  Hawkes processes without the recourse to a Poisson measure.
\end{abstract}
\noindent{Keywords: Girsanov Theorem, Hawkes processes, Invertibility, Random
  Time Change}

\medskip
\noindent{Math Subject Classification: 60G55}
\setcounter{tocdepth}{1}
\tableofcontents
\def\veta{\omega}
\section{Introduction}
\label{sec:motiv}
The simplest non linear stochastic differential equations are those of the form:
\begin{equation}\label{eq_hawkes_brouillon:10}
  X(0)=0 \text{ and} \dif X(t)=\dot u(X(t))\dif t +\dif B(t)\Longleftrightarrow X(t)=\int_{0}^{t}\dot u(X(s))\dif s+B(t),
\end{equation}
where $B$ is a standard Brownian motion. We call these equations where the
coefficient in front of $B$ is equal to $1$, \emph{volatility-$1$} Brownian
SDEs. Denote by $W$ the space of continuous functions over $[0,1]$, null at time
$0$, equipped with the Wiener measure~$\mu$. We may view the process $X$ as a
map from $W$ into itself by the construction:
\begin{align*}
  X\, :\, W & \longrightarrow W                             \\
  \omega    & \longmapsto \Bigl(t\mapsto X(\omega,t)\Bigr).
\end{align*}
Consider also the map:
\begin{equation}\label{eq_hawkes_brouillon:8}
  \begin{split}
    U\, :\, W & \longrightarrow W                                                                 \\
    \omega    & \longmapsto \left(t\mapsto \omega(t)-\int_{0}^{t} \dot u(\omega(s))\dif s\right).
  \end{split}
\end{equation}
Then, \eqref{eq_hawkes_brouillon:10} is equivalent to say that $X$ is a solution
of the equation
\begin{equation*}
  U\circ X=\Id_{W}, \mu \text{-a.s.}.
\end{equation*}
Hence, to solve \eqref{eq_hawkes_brouillon:10} is to invert $U$. In
\cite{ustunel_entropy_2009} and further on in
\cite{hartmann2016variational,LassalleStochasticinvertibilityrelated2012,Ustunel:2014aa},
the authors showed that $U$ is invertible if and only if
\begin{equation}\label{eq_hawkes_brouillon:38}
  \Ent(U^{\#}\mu\,|\,\mu)=\frac12\,\esp{\int_{0}^{1}\dot u(\omega(s))^{2}\dif s}
\end{equation}
where $\Ent(U^{\#}\mu\,|\,\mu)$ is the relative entropy between $U^{\#}\mu$, the
image measure of $\mu $ by $U$, and $\mu$ itself.

Let $\mathbb{D}$ be the Skorohod space of right continuous with left limits
(rcll for short) functions equipped with the
Poisson measure $\pi$, i.e. the law of a unit rate Poisson process on $\R^{+}$.
The initial interpretation of the Poisson counterpart of
\eqref{eq_hawkes_brouillon:10} is to solve
\begin{equation}
  \label{eq_hawkes_core:21}
  Y(t)=\int_{0}^{t}\dot u(Y(s))\dif s +N(t),
\end{equation}
as the inversion of the map $\tilde U$, defined formally as $U$ but from
$\mathbb{D}$ into itself. That is to say that we seek a rcll process $Y$ which
satisfies $\tilde U\circ Y=\Id_{\mathbb{D}},\ \pi-\text{a.s.}$
This appears to be inconsistent with the methods of the previously mentioned
works.
Actually, to grasp the difference between the Wiener and Poisson settings and
consequently to appreciate the inherent differences between equations
\eqref{eq_hawkes_brouillon:10} and \eqref{eq_hawkes_core:21}, we must go back to
the basics of the Girsanov theorem. A measure $\nu$ on $W$ is absolutely
continuous with respect to~$\mu$ if there exists an adapted process $u$ in the
Cameron-Martin space such $B-u$ is a $\nu$-local martingale of square bracket
$(t\mapsto t)$. Since the square bracket of a semi-martingale is unchanged by
the addition of a finite variation process, the Lévy's characterization theorem
says that $B-u$ is a $\nu$ Brownian motion. It is this quasi-invariance which is
used to prove that SDEs like \eqref{eq_hawkes_brouillon:10}, have weak solutions
under mild assumptions on $\dot u$ and which is the key to the investigations
about invertibility.

Now, the main obstacle to a direct generalisation of this approach to the
Poisson space  is that the Girsanov theorem for the Poisson process is best
expressed in terms of point processes rather than in terms of rcll functions as
it involves the notion of compensator which is specific to the former. Let
$\NN$ be the space of configurations on $[0,+\infty)$ (see the exact definition
below) and $\pi$ the probability on $\NN$ such that the canonical process $N$ is
a Poisson process of unit intensity. The Girsanov theorem says that if $\nu$ is
absolutely continuous with respect to $\pi$, there exists a non-decreasing
predictable process, denoted by $y$, such that $N-y$ is a $\nu$ local
martingale. The difference here is that the $\nu$-compensator of $N$ is $y$ so
neither $N$ nor $N-y$ (which is not even a point process) are Poisson processes
under~$\nu$. Otherwise stated,  the
transformation of sample paths induced by an absolutely continuous change of
probability in the Poisson framework is no longer a translation, i.e. the
addition to the nominal path of a regular function.

We have thus to construct another transformation of the sample paths of $N$ such
that the process we obtain, after a change of probability measure, is still a
$\nu$-Poisson process of unit intensity. This question has been seldom addressed
(see
\cite{BassMalliavinCalculusPure1986,DecreusefondPerturbationAnalysisMalliavin1998})
and not in a form which is convenient for our present goal. It turns out that it
is the process $N(y^{*}(t))$, where $y^{*}$ is the right-inverse of $y$, which
plays the role of $B-u$ in the Poisson space. Thus, the true analog of the map
$U$ defined in \eqref{eq_hawkes_brouillon:8} is the map~$\Ymap$ defined as
\begin{equation*}
  \begin{split}
    \Ymap\, :\, \NN & \longrightarrow \NN       \\
    N               & \longmapsto Y:=N\circ y^{*}
  \end{split}
\end{equation*}
and not the map~$\tilde U$. Consequently, the true analog of the invertibility of
$U$  is to invert $\Ymap$ in the space of configurations. We show in  Lemma
\ref{lem:ConditionInversibilite} that this amounts to find a point process $Z$ with compensator $z$  such that
\begin{equation}
\label{eq_hawkes_core:20}z^{*}(N,\ y ^{*}(Z,t))=t, \text{ for any } t\ge 0.
\end{equation}
The interpretation of this equation is the following.
Given a point process $Y$ on the half-line which is adapted to a filtration
$\mathcal F$, we can always construct a $\mathcal F$-predictable process $y$,
known as its compensator, such that $Y-y$ is a ${\cal F}$-local martingale (see
\cite{Jacod1975}). The reverse question which is, given a $\mathcal{F}$-predictable, non-decreasing,
right-continuous, null at time $0$, process $y$, to devise the existence of a
 point process $Y$ such that $y$ is the $\mathcal{F}_{Y}$-compensator of $Y$ has, to the best of our knowledge, never been
addressed in full generality. The only situation we are aware of, where we only have
a partial  solution, is related to
the notion of Hawkes processes. Recall that, given two deterministic functions
$\phi$ and $\psi$, a Hawkes process \cite{hawkes74,massoulie96} is a point
process $H$ such that
$H(t)-\int_{0}^{t}\psi\left(\lambda+\int_{0}^{s}\phi(s-r)\dif H(r)\right)\dif s$
is a local martingale. The usual way to proceed is to construct $H$ as the
solution of a differential equation driven by a marked Poisson process $\Phi$ on
$\R^{+}\times \R^{+}$. This means that $H$  and its  compensator
\begin{equation}
\label{eq_hawkes_core:29}y(t)=\int_{0}^{t}\psi\left(\lambda+\int_{0}^{s}\phi(s-r)\dif H(r)\right)\dif s
\end{equation}
are adapted with respect to the $\sigma$-field generated by $\Phi$ and not to the
minimal $\sigma$-field we could hope for, which is the one generated by $H$
itself. We show below that, if $N$ is a unit rate Poisson process of
$[0,+\infty)$, the map $\Ymap$ is right invertible (i.e.
\eqref{eq_hawkes_core:20} is satisfied for some point process $Z$) if the  point
process $Z$  is the solution of the equation
\begin{equation}
  \label{eq_hawkes_core:22}Z(t)=N\Bigl(y(Z,t)\Bigr),
\end{equation}
see Theorem~\ref{thm_hawkes_brouillon:HawkesEquivautInverseDroite} for a precise
statement. The form of \eqref{eq_hawkes_core:22} entails that $y(Z,t)$ is the
compensator of $Z$ and Corollary~\ref{lem_hawkes_core:egalité_tribu_hawkes}
ensures that it is adapted to the minimal filtration generated by~$Z$. The process
$Z$ is what we call a generalized Hawkes process (g-Hawkes for short) as our
results do not depend on a particular expression of $y$ as in~\eqref{eq_hawkes_core:29}. This
means that constructing a g-Hawkes process is the Poissonian analogue to solving
volatility-1 Brownian SDEs, like \eqref{eq_hawkes_brouillon:10}, in the Wiener
space. This point of view, which, to the best of our knowledge, is new, has
major consequences as we can now transfer the problems known on SDEs (weak,
strong and martingales solutions, perturbations, stationarity, etc.) to g-Hawkes
processes. We only focus here on the different notions of solutions for the
g-Hawkes problem and show the analogue of the Yamada-Watanabe theorem (see
Theorem~\ref{thm_hawkes_core:YW}). We can then construct a classical, possibly
with a non linear intensity, Hawkes process without the recourse to a larger
probability space as usually done (\cite{Delattre2016,massoulie96,hawkes74} and
references therein).

The variational representation of entropy is a crucial theorem of the theory of
large deviations \cite{Deuschel2001}, also known as the Boué-Dupuis or Borell
formula \cite{Borell1997,10.1214/aop/1022855876} for the Gaussian measure on
$\R^{n}$. It has been extended to the Wiener space by Léhec in \cite{Lehec2013}
and with weaker hypothesis by Üstünel in \cite{ustunel_entropy_2009}, as a
consequence of the entropic criterion for the invertibility of $U$, see
\eqref{eq_hawkes_brouillon:38}. The core of the proof involves inverting a map
akin to $U$. Without the entropic criterion, the only known alternative approach
is to solve equation \eqref{eq_hawkes_brouillon:10}, which imposes certain
restrictions on the scope of $\dot u$ that can be considered. With the entropic
criterion, however, it is no longer necessary to solve an SDE; instead, one can pass to the limit in equation
\eqref{eq_hawkes_brouillon:38}, thereby expanding the space of drifts that can
be considered (see also \cite{hartmann2016variational}).

In this work, we follow a similar methodology to establish an analogous formula
for the Poisson space (see
Theorem~\ref{thm_hawkes_core:variational_representation}), based on the Poisson
entropic criterion we establish
(Theorem~\ref{thm_hawkes_core:left_invertible_entropy}): $\Ymap$ is left
invertible if and only if
\begin{equation}
  \label{eq_hawkes_core:37}
  H(\Ymap^{\#}\pi\,|\,\pi)=\espi{\pi}{\int_{0}^{\infty} \Bigl( \dot y^{*}(s)\log \dot y^{*}(s)-\dot y^{*}(s)+1 \Bigr)\dif s},
\end{equation}
where $\dot y^{*}$ is the derivative of the reciprocal function of~$y$.
A similar result has been previously established in
\cite{Zhang2009}, but it requires to  embed the Poisson process within the
broader probability space of marked point processes.


This paper is organized as follows: In Section~\ref{sec:changes-time-changes},
we define what we consider here as random time changes and describe their action
on stochastic integrals. We establish the quasi-invariance theorem and compute
the Radon-Nikodym density of the push-forward of the Poisson measure (see below
for the definition) by a random time change. In Section~\ref{sec:invertibility},
we combine these results to obtain the entropic criterion which guarantees the
invertibility of a map like $\Ymap$. We take profit of the robustness of the
entropic criterion with respect to limit procedure to obtain a variational
representation of the entropy in Section~\ref{sec:vari-repr-entr}. The notion of
weak, strong and martingale g-Hawkes problem are introduced in
Section~\ref{sec:weak-strong-hawkes}. We then prove the Yamada-Watanabe theorem
for the g-Hawkes problem.
\section{Changes of time and changes of measures}
\label{sec:changes-time-changes}
\subsection{Preliminaries}
\label{sec:prelim}
\begin{definition}\label{def_hawkes_core:definition_point_processes}
  Let $\NN$ be the set of locally finite, simple configurations on
  $E=(0,+\infty)$ equipped with the vague convergence. We denote by $\omega$ its
  generic element. For each $\omega\in \NN$, there exist
  $(T_{n}(\omega),n\ge 1)$ such that $ T_{n}(\omega)<T_{n+1}(\omega)$,
  $T_{n}(\omega)$ tends to infinity as $n$ tends to infinity and
  \begin{equation*}
    \omega=\sum_{n\ge 1}\epsilon_{T_{n}(\omega)}
  \end{equation*}
  where $\epsilon_{a}$ is the Dirac mass at~$a$. We denote by $N$, the counting
  process associated to this measure by:
  \begin{equation*}
    N(\omega,t)=\omega\bigl([0,t]\bigr).
  \end{equation*}
  Note that given the sample path of $N$, we can retrieve $\omega$ as
  \begin{equation*}
    T_{n}(\omega)=\inf\{t,\, N(t)=n\} \text{ or } (T_{n}(\omega),\, n\ge 1)=(t, \Delta N(t)=1)
  \end{equation*}
  where $\Delta N(t)=N(t)-N(t^{-})$. This means that we can identify a sample
  path of $N$ with an element of $\NN$. A point process is a random variable
  with values in $\NN$.
\end{definition}
The filtrations do play an important role in the following. We denote by
$\mathcal{F}=(\mathcal{F}_{t},\, t\ge 0)$ a generic right continuous filtration
on $\NN$. We denote by $\F_{\infty}$ the whole $\sigma$-field, i.e.
$\F_\infty=\vee_{t\ge 0}\F_{t}$. The minimal filtration under which the
canonical process $N$ is measurable is
\begin{equation}
  \label{eq_hawkes_core:24}
  \mathcal{N}_{t}=\sigma\{N(s),\ s\le t\}.
\end{equation}
We follow the presentation of \cite{Jacod1975} where the notion of
predictability is defined without any reference to the completion of the
filtration.
\begin{definition}
  For any filtration ${\cal F}=(\mathcal F_{t},\ t\ge 0)$ on $\NN$, a
  real-valued process $(X(t),\,t\ge 0)$ is called $\mathcal F$-predictable and
  belongs to $\pred(\F)$, if the application $(\omega,t)\mapsto X(\omega,t)$ is
  measurable with respect to the $\sigma$-algebra $\pred$ on
  $\NN\times[0,+\infty)$ generated by the applications
  $(\omega,t)\mapsto Y(\omega,t)$ which are $\mathcal F_t$ measurable in
  $\omega$ and left-continuous in $t$. We denote by $\pred(\F)$ the set of
  processes which are $\F$-predictable.
\end{definition}


\begin{theorem}\label{thm_hawkes_core:watanabe_charac} Let $\mathcal{F}$ be a
  filtration on $\NN$ and $\mu$ a probability measure on $(\NN,\F_{\infty})$.
  The next result comes from proposition (3.40) and theorem (3.42)
  of~\cite{JacodCalculstochastiqueproblemes1979}.

  \begin{enumerate}
    \item \label{item_hawkes_core:3} Then, there exists a unique predictable (up
          to $\mu$-null set) process $y$ which is non-decreasing, right
          continuous, null at time $0$ and such that for any $q\ge 1$,
          \begin{equation*}
            t\longmapsto N\bigl(t\wedge T_{q}(N)\bigr) - y\bigl( t\wedge T_{q}(N)\bigr)
          \end{equation*}
          is a uniformly integrable $(\NN,\mathcal{F},\mu)$-martingale.

    \item \label{item_hawkes_core:4} For the converse, we need to assume that
          the filtration is the minimum filtration under which the canonical
          process is measurable. For any process $y\in \pred(\mathcal{N})$,
          non-decreasing, null at time $0$ and right-continuous, there exists a
          unique probability measure on $(\NN,{\mathcal N}_{\infty})$, denoted
          by $\pi_{y}$, such that $N-y$ is a $(\mathcal{N}_{t},\, t\ge 0)$ local
          martingale and $(T_{q}(N),\, q\ge 1)$ is a localizing sequence.

          When $y=\Id$, i.e., when the canonical process is a unit rate Poisson
          process, we prefer to use the notation $\pi$ instead of~$\pi_{\Id}$.
  \end{enumerate}
\end{theorem}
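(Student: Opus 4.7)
The statement is essentially a citation from Jacod's monograph, so the plan is to reproduce the two classical constructions underpinning propositions (3.40) and (3.42). For part (\ref{item_hawkes_core:3}), I would invoke the Doob-Meyer decomposition applied to each stopped submartingale $N(\cdot\wedge T_{q}(N))$. Since this process is bounded by $q$, it is uniformly integrable and of class (D), hence admits a unique decomposition as a uniformly integrable martingale plus a predictable, non-decreasing, integrable, null-at-$0$ process $y^{(q)}$. Uniqueness of the Doob-Meyer decomposition, applied to the process further stopped at $T_{q}(N)$ inside the level-$(q{+}1)$ decomposition, yields $y^{(q+1)}(\cdot\wedge T_{q}(N))=y^{(q)}$ up to $\mu$-null sets, so the $y^{(q)}$ may be glued into a single process $y$ which is predictable, non-decreasing, right-continuous, null at $0$, and satisfies the stated localization property.

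For part (\ref{item_hawkes_core:4}), I would build $\pi_{y}$ by specifying the joint law of the jump times $(T_{q})_{q\ge 1}$ inductively through the interarrival structure. Set $T_{0}=0$; having constructed the law of $(T_{1},\dots,T_{q})$, define the conditional survival function of $T_{q+1}$ given $\mathcal{N}_{T_{q}}$ via the Doléans-Dade exponential associated with the predictable hazard $y$, which on the event $\{T_{q}<s\}$ reads
\begin{equation*}
\pi_{y}\bigl(T_{q+1}>s \,\big|\, \mathcal{N}_{T_{q}}\bigr)=\exp\bigl(-(y(s)-y(T_{q}))\bigr)
\end{equation*}
in the purely continuous case (with the usual multiplicative correction involving the jumps of $y$ in general). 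The Ionescu-Tulcea theorem assembles these conditional laws into a unique probability $\pi_{y}$ on $(\NN,\mathcal{N}_{\infty})$, and a direct computation of the conditional expectation of $N(t\wedge T_{q+1})-N(t\wedge T_{q})$ then shows that $N-y$ is a local martingale with $(T_{q})$ as a localizing sequence.

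Uniqueness in part (\ref{item_hawkes_core:4}) follows because on the minimal filtration $\mathcal{N}$ the law of a simple point process is entirely determined by the conditional distributions of its jump times, which are in turn forced by the compensator through the martingale relation applied between successive $T_{q}$'s. The main obstacle I anticipate is keeping the entire argument inside the non-completed filtration framework adopted by Jacod (emphasized after the definition of $\pred(\F)$): predictability must be verified without recourse to the usual conditions, which requires carefully selecting versions of the $y^{(q)}$ when pasting in part (\ref{item_hawkes_core:3}) and carefully selecting regular versions of the conditional laws when defining the interarrival distributions in part (\ref{item_hawkes_core:4}), so that the resulting processes are genuinely $\pred(\mathcal{N})$-measurable rather than merely measurable after completion.
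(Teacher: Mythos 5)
The paper gives no proof of this result---it is stated as a direct citation of Jacod's Proposition (3.40) and Theorem (3.42)---and your sketch faithfully reconstructs the two standard arguments behind those results: Doob-Meyer decomposition of the stopped counting processes plus consistency-gluing for part (1), and Ionescu-Tulcea assembly of interarrival conditional laws from the Doléans-Dade exponential of $y$ for part (2), with appropriate attention to the non-completed filtration framework the paper works in. The only minor point worth recording is that for a general non-decreasing $y$ (as opposed to the absolutely continuous case the paper actually uses) the converse construction additionally needs $\Delta y \le 1$ so that the multiplicative jump factors $1-\Delta y$ define a genuine survival function; your parenthetical about the jump correction shows you are aware of this.
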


\subsection{Absolute continuity and equivalence}
\label{sec:absol-cont-equiv}
We summarize the results on local absolute continuity on the Poisson space which
can be found in theorems 8.32 and 8.35 and corollary 8.37 of
\cite{JacodCalculstochastiqueproblemes1979}.
\begin{theorem}\label{thm_hawkes_brouillon:absolueContinuite}
  Consider a filtration $\F=(\F_{t},\,t\ge 0)$ on $\NN$. Let $\dot \kappa$ be a
  non-negative, locally integrable $\F$-predictable process. We set
  \begin{equation*}
    \kappa(t)=\int_{0}^{t}\dot \kappa(s)\dif s.
  \end{equation*}
  We consider $\pi_{\kappa}$ as defined in
  Theorem~\ref{thm_hawkes_core:watanabe_charac}.
  \begin{enumerate}
    \item If a probability measure $\nu$ on $\NN$ is locally absolutely
          continuous with respect to~$\pi_\kappa$ along $\F$ (that is denoted
          $\nu \ll_{\text{loc},\F} \mu$) then 
          there exists a unique process $\dot y_\kappa$ which is non-negative
          and $\mathcal{F}$-predictable such that
          \begin{equation}
            \label{eq_hawkes_brouillon:24}      \forall t\ge 0,  \   \nu\left( \int_{0}^{t} \left( 1-\sqrt{\dot y_\kappa(s)} \right)^{2}\dot\kappa(s)\dif s <\infty \right)=1
          \end{equation}
          and
          \begin{equation}
            \label{eq_hawkes_brouillon:25}      t\longmapsto N\bigl( t
            \bigr)-\int_{0}^{t}\dot y_\kappa(s)\dot \kappa(s)\dif s \text{    is a $(\F,\nu)$-local martingale}.
          \end{equation}
  \end{enumerate}
  We set
  \begin{equation*}
    y_\kappa(t)=\int_{0}^{t}\dot y_\kappa(s)\,\dot \kappa(s)\dif s
  \end{equation*}
  which belongs to $\pred(\mathcal{F})$, is non decreasing, right continuous and
  null at time $0$. Hence, with the notations introduced above,
  $\nu=\pi_{y_{\kappa}}$.
  \begin{enumerate}[resume]
    \item If $\pi_{y_\kappa}$ is absolutely continuous with respect to
          $\pi_{\kappa}$ on $\F_{\infty}$ (that is denoted by
          $\pi_{y_\kappa}\ll \pi_{\kappa}$) then
          \begin{equation}\label{eq_hawkes_brouillon:26}
            \pi_{y_\kappa}\left(  \int_{0}^{\infty}\left( 1-\sqrt{\dot y_\kappa(s)} \right)^{2}\dot\kappa(s)\dif s<\infty\right)=1.
          \end{equation}

	\end{enumerate}
	For the converse part, we need to assume that $\F=\mathcal{N}$, the minimal
  filtration which makes $N$ adapted. In this situation, consider $\dot y$ a
  locally integrable, non negative and $\mathcal{N}$-predictable process and
  $\pi_{y_{\kappa}}$ the probability measure on $\NN$, which satisfies
  \eqref{eq_hawkes_brouillon:24} and \eqref{eq_hawkes_brouillon:25}.

	\begin{enumerate}[resume]
	  \item Then, $\pi_{y_{\kappa}}$ is locally absolutely continuous with respect
          to $\pi_{\kappa}$ along $\mathcal{N}$ and
          \begin{multline*}
            \Lambda_{ y_\kappa}(N,t):= \left.\frac{d\pi_{ y_\kappa}}{d\pi_{\kappa}}\right|_{\mathcal{N}_{t}}\\
            =
            \begin{cases}
              \exp\left(\displaystyle\int_{0}^{t} \log \dot y_\kappa(s) \dif N(s)+\displaystyle\int_{0}^{t}\bigl(1-\dot
              y_\kappa(s)\bigr)\dot \kappa(s)\dif s\right) & \text{ if } t\le S_{m}  ,           \\
              0                                            & \text{ if } t\ge \limsup_{m} S_{m},
            \end{cases}
          \end{multline*}
          where for any integer $m\ge 1$,
          \begin{equation*}
            S_{m}=\inf\left\{ t,\, \int_{0}^{t} \left( 1-\sqrt{\dot y_\kappa(s)} \right)^{2}\dot\kappa(s)\dif s\ge m \right\}.
          \end{equation*}
	  \item Finally, the probability measure $\pi_{y_{\kappa}}$ is absolutely
          continuous with respect to~$\pi_{\kappa}$ on
          $(\NN,\mathcal{N}_{\infty})$ if and only if
          \eqref{eq_hawkes_brouillon:25} and \eqref{eq_hawkes_brouillon:26} are
          satisfied.
	\end{enumerate}

\end{theorem}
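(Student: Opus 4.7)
Since the statement is explicitly presented as a compilation of results from Jacod's monograph~\cite{JacodCalculstochastiqueproblemes1979}, the plan is to translate his general theory of compensators under a change of measure for random jump measures into the present setting of simple point processes on $(0,+\infty)$ with base intensity $\dot\kappa$.

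For the direct parts (1) and (2), the strategy is to invoke Girsanov's theorem for the jump measure of $N$. Assuming $\nu\ll_{\loc,\F}\pi_\kappa$, the density restricted to each $\F_t$ is a strictly positive $\F$-local martingale $L$. The structure of local martingales on $(\NN,\F,\pi_\kappa)$ lets us write $L$ as the stochastic exponential of an integral against the compensated measure $N-\kappa$, with a predictable integrand of the form $\dot y_\kappa-1$ for a uniquely determined non-negative predictable process $\dot y_\kappa$. The Girsanov identity then gives that under $\nu$ the compensator of $N$ becomes $\int_0^{\cdot}\dot y_\kappa(s)\dot\kappa(s)\dif s$, which is exactly~\eqref{eq_hawkes_brouillon:25}; the Hellinger-type bound~\eqref{eq_hawkes_brouillon:24} encodes the $\nu$-a.s.\ non-explosion of $L$, via the standard identification of the Hellinger process. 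Passing from local to global absolute continuity promotes this bound to~\eqref{eq_hawkes_brouillon:26} by forcing $L$ to be a uniformly integrable martingale on $\F_\infty$.

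For the converse parts (3) and (4), the hypothesis $\F=\mathcal N$ is crucial because it allows one to reconstruct the probability measure from the predictable intensity alone. I would define $\Lambda_{y_\kappa}$ directly by the Dol\'eans--Dade formula (which after expansion coincides with the expression displayed in the theorem), verify by the It\^o formula for pure-jump semimartingales that it is a positive local martingale with localizing sequence $(T_q(N)\wedge S_m)_{q,m}$, and check that before $S_m$ it is uniformly integrable. On each $\F_{S_m\wedge t}$ this defines a probability compatible with the previous ones, and these locally defined measures glue into a single $\pi_{y_\kappa}$ on $\mathcal N_\infty$ by a Tulcea-type extension argument, using that $\mathcal N$ is countably generated through the jump times. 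Part (4) is then the equivalence between uniform integrability of $\Lambda_{y_\kappa}$ on $\mathcal N_\infty$ and the non-explosion condition $\limsup_m S_m=+\infty$ $\pi_{y_\kappa}$-a.s., which is exactly~\eqref{eq_hawkes_brouillon:26}.

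The main obstacle is precisely this gluing step: when~\eqref{eq_hawkes_brouillon:26} fails, the density vanishes on a set of positive measure and no globally absolutely continuous extension exists, so one must carefully parametrize the construction by the stopping times $S_m$ and rely on the explicit inter-arrival representation of $\pi_\kappa$ to check consistency. The minimality of $\mathcal N$ is essential at this point, since a richer filtration would introduce external randomness not reflected in $\dot y$ and would make the Tulcea extension impossible without additional structural assumptions.
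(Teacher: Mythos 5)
The paper does not prove this theorem: the paragraph immediately preceding it says explicitly that the statement ``summarizes the results on local absolute continuity on the Poisson space which can be found in theorems 8.32 and 8.35 and corollary 8.37 of \cite{JacodCalculstochastiqueproblemes1979}.'' You correctly recognize this and sketch a reconstruction of Jacod's proofs rather than an independent argument, which is the right way to read the statement. Your identification of the key ingredients --- the Hellinger process $\int_0^\cdot (1-\sqrt{\dot y_\kappa})^2\dot\kappa\,\oD s$ governing the passage from local to global absolute continuity, the Dol\'eans--Dade exponential and its localization by the $S_m$, the special role of the minimal filtration $\mathcal N$ in the converse direction (since only there can one reconstruct the measure from the intensity alone), and the consistency/extension step to glue the $\F_{t\wedge S_m}$-restricted measures --- matches the architecture of Jacod's treatment.

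One structural caveat worth noting: for the direct part (1), stated for an arbitrary filtration $\F$, you cannot in general write the density process $L$ as a stochastic exponential of an integral against the compensated measure $N-\kappa$, nor assume $L$ strictly positive under $\pi_\kappa$. Martingale representation is not available for an arbitrary $\F$, and mere local absolute continuity only gives a non-negative density process. Jacod's Girsanov argument for compensators avoids this: one works directly with the predictable projection of $L$ relative to the jump measure to produce $\dot y_\kappa$, without ever representing $L$ as a stochastic exponential. The Dol\'eans--Dade representation is exactly what you need --- and what is available, via martingale representation --- in the converse part under $\F=\mathcal N$. In other words, you have the right toolbox but assign the stochastic-exponential representation to the wrong half of the argument; Jacod uses it to \emph{build} $\pi_{y_\kappa}$ from $\dot y$, not to \emph{extract} $\dot y_\kappa$ from $\nu$.
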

The next result gives some necessary and sufficient conditions for the
equivalence of $\pi_{y}$ and $\pi_{\kappa}$, see Proposition (7.11) of
\cite{JacodCalculstochastiqueproblemes1979}.
\begin{lemma}
	\label{thm_hawkes_core:Equivalence_criterion}
	Assume that $\nu$ is absolutely continuous with respect to $\mu$ on
  $\mathcal{N}_{\infty}$ and set
	\begin{equation}
	  \label{eq_hawkes_core:8}
	  \Lambda(t)=\left.\frac{\D \nu}{\D \mu}\right|_{\mathcal{N}_{t}}.
	\end{equation}
	Then $\nu$ and $\mu$ are equivalent if and only the following two conditions
  are satisfied:
	\begin{enumerate}[label=\roman*)]
	  \item \label{item_hawkes_core:1} The local martingale
          $(\Lambda(t),\, t\ge 0)$ is uniformly integrable, i.e. there exists
          $\Lambda\in L^{1}(\mu)$ such that
          \begin{equation*}
            \Lambda(t)=\espi{\mu}{\Lambda \/ \mathcal{N}_{t}}.
          \end{equation*}
	  \item \label{item_hawkes_core:2} The random variable $\Lambda $ is positive
          $\mu$-a.s. 
	\end{enumerate}
\end{lemma}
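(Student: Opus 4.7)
The plan is to prove the two implications by combining the Radon--Nikodym theorem with Doob's martingale convergence and the chain rule for Radon--Nikodym derivatives.

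For the necessity direction, I would start from $\nu\sim\mu$. The hypothesis $\nu\ll\mu$ on $\mathcal{N}_{\infty}$ produces a terminal density $\Lambda\in L^{1}(\mu)$ with $\nu=\Lambda\cdot\mu$, and by the tower property one has $\Lambda(t)=\espi{\mu}{\Lambda\/\mathcal{N}_{t}}$; this family is a Doob martingale, hence uniformly integrable, which is (i). To obtain (ii), I would invoke the reverse absolute continuity $\mu\ll\nu$: setting $\Xi=\D\mu/\D\nu\in L^{1}(\nu)$, the chain rule for Radon--Nikodym derivatives yields $\Lambda\,\Xi=1$ $\mu$-a.s., so $\Lambda>0$ $\mu$-a.s.

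For the sufficiency direction, I would assume (i) and (ii). By (i) and Doob's martingale convergence theorem, $\Lambda(t)\to\Lambda$ $\mu$-a.s.\ and in $L^{1}(\mu)$, and $\Lambda$ is the Radon--Nikodym density of $\nu$ with respect to $\mu$ on $\mathcal{N}_{\infty}$. To secure the reverse absolute continuity $\mu\ll\nu$, I would take any $A\in\mathcal{N}_{\infty}$ with $\nu(A)=0$: this reads $\int_{A}\Lambda\,\D\mu=0$, and since the non-negative integrand $\Lambda$ is strictly positive $\mu$-a.s.\ by (ii), it must vanish $\mu$-a.s.\ on $A$, forcing $\mu(A)=0$. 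Combining both absolute continuities gives $\mu\sim\nu$.

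No serious obstacle is expected: the argument is essentially a packaging of textbook measure-theoretic facts (Radon--Nikodym, Doob convergence, chain rule) adapted to the present filtered setting. The only subtle point, which motivates the formulation of the lemma in this form, is that in the applications that follow, $(\Lambda(t))$ will be constructed via a stochastic exponential as in Theorem~\ref{thm_hawkes_brouillon:absolueContinuite} and is a priori only a local martingale; condition (i) is then the mechanism that upgrades local absolute continuity into a genuine Radon--Nikodym relation on $\mathcal{N}_{\infty}$, while condition (ii) further upgrades that relation into equivalence.
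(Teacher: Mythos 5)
The paper does not supply a proof here: the lemma is stated with an attribution to Proposition~(7.11) of Jacod's monograph, so there is no in-text argument to compare against. Your self-contained proof is correct as a proof of the statement as literally written, and it is a clean packaging of Radon--Nikodym, the chain rule, and Doob convergence. You also put your finger on the real subtlety: if $\nu\ll\mu$ on $\mathcal{N}_{\infty}$ is taken as a standing hypothesis, then $\Lambda(t)=\espi{\mu}{\left.\tfrac{\D\nu}{\D\mu}\right|_{\mathcal{N}_{\infty}}\/\mathcal{N}_{t}}$ is automatically a closed, hence uniformly integrable, martingale, so condition~(i) carries no information and the lemma collapses to the elementary observation that $\nu\sim\mu$ iff the terminal density is $\mu$-a.s.\ positive. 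The version that Jacod actually proves --- and the one that matches the way the lemma is invoked later, e.g.\ in the proof of \ref{item:1}$\Rightarrow$\ref{item:2} of Theorem~\ref{thm_hawkes_brouillon:commeLassalle}, where one starts from $\espi{\pi}{\Lambda_{y}}=1$ rather than from a pre-existing density on $\mathcal{N}_{\infty}$ --- assumes only local absolute continuity along the filtration. There $(\Lambda(t))$ is a priori a nonnegative local martingale, hence a supermartingale, and condition~(i) is precisely the nontrivial upgrade from local to global absolute continuity: one would argue via Fatou that $\espi{\mu}{\Lambda}\le 1$, identify equality with $L^{1}$ convergence (Scheffé), and only then conclude that $\Lambda$ is the global density. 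Your proof is sound for the formulation as printed, but be aware that a faithful rendering of the intended (Jacod) result requires that extra supermartingale step, since the displayed hypothesis is slightly stronger than what is available in the applications.
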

In view of this theorem, we introduce the following sets of processes.
\begin{definition}
	Let $\F$ be a filtration on $\NN\times \R^{+}$ and $\dot \kappa$ a
  non-negative predictable process. Consider the probability measure
  $\pi_{\kappa}$ on $\NN$ and let $\pred^{++}(\F,\pi_{\kappa})$ (respectively
  $\pred^{+}(\F)$) be the set of positive (respectively non-negative)
  $\F$-predictable processes $\dot y $ such that for any $t\ge 0$
	\begin{equation*}
	  \pi_\kappa\Bigl( y_{\kappa}(t)<+\infty \Bigr)=1 \text{ and }    \pi_\kappa\left(     \lim_{t\to \infty} y_{\kappa}(t)=+\infty \right)=1,
	\end{equation*}
	where the process $y$ is defined by
	\begin{equation*}
	  y_{\kappa}(\omega,t)=\int_{0}^{t}\dot y(\omega,s)\dot \kappa(s)\dif s.
	\end{equation*}
	We also introduce the following subset of $\pred^{++}(\F,\pi_\kappa)$:
	\begin{align*}
	  \pred^{++}_{2}(\F,\pi_\kappa) & =\left\{ \dot y \in \pred^{++}(\F,\pi_\kappa), \ \pi_\kappa\left( \int_{0}^{\infty}\left( 1-\sqrt{\dot y(s)} \right)^{2}\dot \kappa(s)\dif s <\infty\right)=1 \right\}.
	\end{align*}
	The most restricted class we consider is
  $ \pred^{++}_{\infty}(\F,\pi_{\kappa})$ of processes
  $\dot y\in \pred^{++}(\F,\pi_\kappa)$ for which there exist
  $\varepsilon\in (0,1)$ and $T>0$ such that
	\begin{equation*}
	  \varepsilon\le \dot y(s)\le \frac{1}{\varepsilon},\ \forall s\ge 0\text{ and }  \dot y(s)=1\text{ for  }s\ge T, \ \pi_\kappa-\text{a.s.}
	\end{equation*}
	Note that if $\dot y $ belongs to $ \pred^{++}_{\infty}(\F,\pi_{\kappa})$ then
  $(\Lambda_{y_{\kappa}}(N,t),\, t\ge 0)$ is uniformly integrable.
\end{definition}

As long as there is a finite number of jumps of the process~$N$, which happens
on any finite interval, the argument of the exponential in
$\Lambda_{y_{\kappa}}$ cannot be minus infinity hence $\Lambda_{y_{\kappa}}$ is
positive. We give a sufficient condition which ensures that this may not happen
even on the half line.
\begin{lemma}\label{lem_hawkes_core:Conditionpositivity}
	If the predictable process $(\dot y-1)$ belongs to
  $L^{1}\bigl(\NN\times \R^{+},\, \pi_\kappa\otimes \dot \kappa(s)\dif s\bigr)$
  then, $\pi_\kappa$-a.s.,
	\begin{align}
	  \Lambda_{y_\kappa} & :=\lim_{t\to \infty} \Lambda_{y_{{\kappa}}}(t)\notag                                                                                                              \\
                       & = \exp\left( \int_{0}^{\infty}\log\dot y(s)\dif N(s)+\int_{0}^{\infty}\bigl(1-\dot y(s)\bigr) \dot \kappa(s)\dif s\right)\in (0,+\infty).\label{eq_hawkes_core:2}
	\end{align}
\end{lemma}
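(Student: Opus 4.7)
The lemma claims three things: the limit $\Lambda_{y_{\kappa}}$ exists $\pi_{\kappa}$-a.s., equals the displayed exponential, and lies in $(0,+\infty)$. Non-negative supermartingale convergence already provides an a.s.\ limit in $[0,+\infty]$, so the whole task reduces to showing that, under the $L^{1}$ hypothesis, the two improper integrals forming the exponent converge $\pi_{\kappa}$-a.s.\ to finite real numbers. Once that is done, the equality with $e^{A(\infty)}$ follows from the explicit expression in Theorem~\ref{thm_hawkes_brouillon:absolueContinuite} together with the elementary bound $(1-\sqrt{x})^{2}\le|x-1|$ for $x\ge 0$, which via Fubini gives $\int_{0}^{\infty}(1-\sqrt{\dot y})^{2}\dot\kappa(s)\dif s<\infty$ a.s.\ and hence $\limsup_{m}S_{m}=+\infty$, so the formula of Theorem~\ref{thm_hawkes_brouillon:absolueContinuite} is valid on the whole half-line.

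The Lebesgue half is immediate: Fubini applied to the hypothesis yields $\int_{0}^{\infty}|\dot y(s)-1|\dot\kappa(s)\dif s<\infty$ $\pi_{\kappa}$-a.s., so $\int_{0}^{\infty}(1-\dot y)\dot\kappa\dif s$ converges absolutely. Applying the compensator identity of Theorem~\ref{thm_hawkes_core:watanabe_charac} to the non-negative predictable process $|\dot y-1|$ then gives
$$\espi{\pi_{\kappa}}{\int_{0}^{\infty}|\dot y(s)-1|\dif N(s)}=\espi{\pi_{\kappa}}{\int_{0}^{\infty}|\dot y(s)-1|\dot\kappa(s)\dif s}<\infty,$$
so $\int_{0}^{\infty}|\dot y(s)-1|\dif N(s)$ is also $\pi_{\kappa}$-a.s.\ finite.

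The main obstacle is to show that $\int_{0}^{\infty}\log\dot y(s)\dif N(s)$ is $\pi_{\kappa}$-a.s.\ a finite real number: the logarithm is unbounded near zero while the hypothesis only controls the deviation of $\dot y$ from~$1$. I would split along the predictable set $E=\{s\ge 0:\dot y(s)<1/2\}$. On $E^{c}$ the elementary inequality $|\log x|\le 2|x-1|$ (valid for $x\ge 1/2$ by checking the sign of the derivative on each side of $1$) yields $\left|\int_{E^{c}}\log\dot y\dif N\right|\le 2\int_{0}^{\infty}|\dot y-1|\dif N<\infty$ a.s.\ by the previous step. On $E$ the bound $\indic_{E}\le 2|\dot y-1|$ combined with the compensator identity gives $\espi{\pi_{\kappa}}{N(E)}\le 2\espi{\pi_{\kappa}}{\int_{0}^{\infty}|\dot y-1|\dot\kappa\dif s}<\infty$, so $N(E)<\infty$ $\pi_{\kappa}$-a.s.; since $\dot y$ is strictly positive, each of the finitely many jump times $T_{k}$ of $N$ falling in $E$ contributes a finite value $\log\dot y(T_{k})$, so the corresponding partial sum is a finite real number. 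Adding the two halves and the already-controlled Lebesgue integral produces a finite exponent $A(\infty)$ $\pi_{\kappa}$-a.s., whence $\Lambda_{y_{\kappa}}=e^{A(\infty)}\in(0,+\infty)$, as claimed.
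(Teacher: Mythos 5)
Your proof is correct and follows essentially the same route as the paper's: the compensator identity turns the $L^{1}$ hypothesis into $\pi_{\kappa}$-a.s.\ finiteness of $\sum_{n}\lvert\dot y(T_{n})-1\rvert$, which forces the argument of the exponential to converge absolutely. The only cosmetic difference is that the paper invokes the classical infinite-product criterion (``$\sum\lvert a_{n}-1\rvert<\infty$ implies $\prod a_{n}$ converges to a nonzero limit'') as a black box, whereas you unpack it yourself via the split on $\{\dot y<1/2\}$ to control $\log\dot y$ near zero — this is exactly the content hidden inside that criterion, so the two arguments coincide.
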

\begin{proof}

	Since, for any $x\ge -1$,
	\begin{equation*}
	  \left(1-\sqrt{1+x}\right)^{2}\le |x|,
	\end{equation*}
	we have
	\begin{equation*}
	  \espi{\pi_\kappa}{\int_{0}^{\infty} \left( 1-\sqrt{\dot y(s)} \right)^{2}\dot \kappa(s)\dif s}\le \espi{\pi_\kappa}{\int_{0}^{\infty} \left|\dot y(s)-1\right|\ \dot \kappa(s)\dif s}<\infty.
	\end{equation*}
	Hence, for $\pi_\kappa$ almost all $\omega$, there exists $m(\omega)<\infty$
  such that $S_{m}(\omega)=\infty$ for any $m\ge m(\omega)$ ($S_m$ is defined in
  Theorem \ref{thm_hawkes_brouillon:absolueContinuite} point 3) and then
	we have
	\begin{align*}
	  \Lambda_{y_{{\kappa}}} & =\lim_{t\to \infty}\Lambda_{y_{{\kappa}}}(t)                                                                                                      \\
                           & = \left(\prod_{n=1}^{\infty}\dot y\left( T_{n} \right)\right)\ \exp\left( \int_{0}^{\infty}\left(1-\dot y(s)\right) \dot \kappa(s)\dif s \right).
	\end{align*}
	The infinite product is convergent as soon as
	\begin{equation*}
	  \sum_{n=1}^{\infty} \bigl|\dot y(T_{n})-1\bigr|<\infty.
	\end{equation*}
	Since
	\begin{equation*}
	  \espi{\pi_\kappa}{\sum_{n=1}^{\infty} \left|\dot y(T_{n})-1\right|}=\espi{\pi_\kappa}{\int_{0}^{\infty}\left| \dot y(s)-1 \right|\ \dot \kappa(s)\dif s},
	\end{equation*}
	we see that
  $(\dot y-1)\in L^{1}\bigl(\NN\times \R^{+},\, \pi_\kappa\otimes \dot \kappa(s)\dif s\bigr)$
  entails \eqref{eq_hawkes_core:2}.

\end{proof}

\subsection{Random time changes}
\label{sec:random-time-changes}

The sequel of this paper will be based on the notion of random time change. We
refer to \cite[chapter 10]{JacodCalculstochastiqueproblemes1979} or
\cite{MR3363697} for details on this notion and its links with stochastic
calculus.
\begin{definition}
	On $\bigl(\NN,\F=(\F_{t},\, t\ge 0)\bigr)$, a random change of time is a right
  continuous, null at time $0$, non-decreasing process $(\eta(t),\, t\ge 0)$
  such that
	\begin{equation*}
	  \eta(t)<\infty, \ \forall t\ge 0 ,\  \lim_{t\to  \infty} \eta(t)=+\infty
	\end{equation*}
	and for any $t\ge 0$, $\eta(t)$ is an $\F$-stopping time. We denote by
  $\F^{\eta}$ the filtration $(\F_{\eta(t)},\, t\ge 0)$. For $X$ a process,
  $\tau_{\eta} (X)$ is the process defined by
	\begin{equation*}
	  \tau_{\eta} (X)(t)=
	  X\Bigl(\eta(t)\Bigr).
	\end{equation*}
\end{definition}
Note that many changes of time are given by the right inverse of non decreasing
predictable processes: For $y$ such a process,
\begin{equation*}
	y^{*}(\omega,t)=\inf\left\{s,\, y(\omega,s)>  t\right\}
\end{equation*}
is a change of time (with the usual convention that the infimum of the empty set
is infinite). According to \cite[Chapter
10]{JacodCalculstochastiqueproblemes1979},
\begin{align}
	\label{eq_hawkes_core:25}
	\bigl(y(\omega,t)<s\bigr)        & =\bigl(y^{*}(\omega,\, s^{-})>t\bigr)                      \\
	\bigl(y ^{*}(\omega,\,t)<s\bigr) & =\bigl(y(\omega,\ s^{-})>t\bigr)\label{eq_hawkes_core:28}.
\end{align}
If $y$ is continuous, we have
\begin{equation*}
	y\bigl(\omega,\, y^{*}(\omega,t)\bigr)=t \text{ and
	} y^{*}\bigl(\omega,\ y(\omega,t)\bigr)=\inf\{u,\ y^{*}(\omega,u)>y^{*}(\omega,t)\}.
\end{equation*}
Note that if $\dot y$ belongs to $\pred^{++}(\F,{\mu})$, then $y$ is
almost-surely an homeomorphism from $\R^{+}$ onto itself, hence
\begin{equation}\label{eq_hawkes_brouillon:3}
	y\bigl(\omega,y^{*}(\omega,t)\bigr)=y^{*}\bigl(\omega,y(\omega,t)\bigr)=t \text{
	  and } \tau_{y^{*}}(X)(t)=X\Bigl( y^{*}(t) \Bigr), \text{ for all }t\ge 0,\ \mu\text{-a.s.}
\end{equation}
Furthermore, \eqref{eq_hawkes_core:25} entails that $y(\omega,t)$ is an
$\F^{y^{*}}$ stopping time.

In the Brownian setting, the entropic criterion involves the $L^{2}$ norm of the
drift. The square function is here replaced by a convex function which appears
frequently in Poissonian settings (see for instance \cite{Atar2012} and
references therein).
\begin{definition}
	Consider $\m$, the smooth, convex, non-negative function defined on
  $[-1,\infty)$ by
	\begin{equation*}
	  \m(x)=
	  \begin{cases}
      (x+1)\log(x+1)-x & \text{ if } x >-1, \\
      1                & \text{ if } x=-1.
	  \end{cases}
	\end{equation*}
	and $\L_{\m}$, the corresponding Orlicz space \cite{AdamsSobolevspaces2003}:
	\begin{equation*}
	  \L_{\m}=\Bigl\{f\, :\,\R^{+}\to [-1,\infty),\ \int_{0}^{\infty} \m\bigl(f(s)\bigr)\dif s<\infty\Bigr\}.
	\end{equation*}
\end{definition}
We view the time change as a map from $\NN$ into itself.
\begin{definition}\label{def_hawkes_brouillon:1}
	For $\dot y\in \pred^{++}(\mathcal{N})$, let
	\begin{align}
	  \tau_{y^{*}} :\, \NN & \longrightarrow \NN\notag                                                      \\\label{eq_hawkes_brouillon:2}
	  N                    & \longmapsto \tau_{y^{*}}(N)\,:\, \Bigl(t\mapsto N\bigl(y^{*}(N,t)\bigr)\Bigr).
	\end{align}
	We denote by $Y$ the process $\tau_{y^{*}}(N)$.
\end{definition}
Note that the process $Y$ is adapted to the filtration
\begin{equation*}
	\F^{y^{*}}=\left(\F_{y^{*}(t)},\, t\ge 0\right)
\end{equation*}
and has $(\F^{y^{*}},\pi)$-compensator~$y^{*}$ (see \cite[Theorem
10.17]{JacodCalculstochastiqueproblemes1979}).
\begin{definition}
	For a probability measure $\mu$ on $\NN$, we denote by $\tau_{y^{*}}^{\#}\mu$
  the distribution of the process~$Y$ on~$\NN$ or equivalently the push-forward
  of the measure $\mu$ by the map~$\tau_{y^{*}}$.

\end{definition}
One way to understand this transformation is to consider that the locations of
the atoms of the underlying configuration $N$ are fixed once and for all. They
are discovered at unit rate with~$N$ and at random speed by~$Y$: The $k$-th jump
of $Y $ occurs at time $t$ if and only $y^{*}({N},t)=T_{k}(N)$ hence
\begin{equation}
  \label{eq_hawkes_brouillon:sautsDeY}
  y^{*}(N,\, T_{k}(Y))=T_{k}(N).
\end{equation}

\begin{lemma}\label{lem:EgaliteFiltration}
  For $\dot y\in \pred^{++}(\mathcal{N})$, let
  $\mathcal{Y}_{t}=\sigma\bigl(Y(u),\, u\le t\bigr)$. Then, we have
  \begin{equation*}
    \mathcal{Y}_{t}\vee \sigma(y^{*}(N,s),\, s\le t)=\mathcal{N}_{y^{*}(N,t)}.
  \end{equation*}
  Furthermore,
  $\mathcal{Y}_{\infty}\subset \mathcal{N}^{y^{*}}_{\infty}=\mathcal{N}_{\infty}$.
\end{lemma}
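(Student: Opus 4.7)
The plan is to prove the set equality by two inclusions and then recover the second assertion by letting $t\to\infty$.

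The inclusion $\mathcal{Y}_{t}\vee \sigma(y^{*}(N,s),\, s\le t)\subset \mathcal{N}_{y^{*}(N,t)}$ is direct. Equation \eqref{eq_hawkes_core:28}, together with the right-continuity of the filtration, shows that each $y^{*}(N,s)$ is an $\mathcal{N}$-stopping time, and $s\mapsto y^{*}(N,s)$ is non-decreasing, so for $s\le t$ one has $y^{*}(N,s)\le y^{*}(N,t)$ and both $y^{*}(N,s)$ and $Y(u)=N(y^{*}(N,u))$ (for $u\le t$) are $\mathcal{N}_{y^{*}(N,t)}$-measurable.

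For the reverse inclusion, I would use the characterization $\mathcal{N}_{y^{*}(N,t)}=\sigma(N(s\wedge y^{*}(N,t)),\, s\ge 0)$ and show each generator lies in the left-hand side. If $s\ge y^{*}(N,t)$, the generator equals $Y(t)\in\mathcal{Y}_{t}$. If $s<y^{*}(N,t)$, it equals $N(s)=\#\{k\ge 1:T_{k}(N)\le s\}$. Because $\dot y\in \pred^{++}(\mathcal{N})$, the process $y$ is almost surely a homeomorphism of $\R^{+}$, and \eqref{eq_hawkes_brouillon:sautsDeY} gives $T_{k}(N)=y^{*}(N,T_{k}(Y))$. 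For indices $k\le Y(t)$, the jump time $T_{k}(Y)$ lies in $[0,t]$ and is $\mathcal{Y}_{t}$-measurable; composing it with the process $s\mapsto y^{*}(N,s)$ restricted to $[0,t]$ recovers $T_{k}(N)$ measurably with respect to $\mathcal{Y}_{t}\vee\sigma(y^{*}(N,s),\, s\le t)$. For $k> Y(t)$ we have $T_{k}(N)>y^{*}(N,t)>s$, so such indices do not contribute to $N(s)$, and the count is therefore measurable in the joint $\sigma$-field.

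The second assertion then follows immediately: the homeomorphism property forces $y^{*}(N,t)\to\infty$ as $t\to\infty$, hence $\mathcal{N}^{y^{*}}_{\infty}=\bigvee_{t\ge 0}\mathcal{N}_{y^{*}(N,t)}=\mathcal{N}_{\infty}$, and taking $t\to\infty$ in the first part gives $\mathcal{Y}_{\infty}\subset\mathcal{N}^{y^{*}}_{\infty}=\mathcal{N}_{\infty}$. The main delicate point is the composition step in the reverse inclusion: one has to justify that evaluating the process $s\mapsto y^{*}(N,s)$ at the $\mathcal{Y}_{t}$-measurable random time $T_{k}(Y)\le t$ yields a random variable measurable with respect to $\mathcal{Y}_{t}\vee\sigma(y^{*}(N,s),\, s\le t)$. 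This reduces to progressive measurability of $y^{*}$ on $[0,t]$, which holds because $y^{*}$ is adapted, non-decreasing and right-continuous.
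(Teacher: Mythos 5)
Your argument is correct in its main thrust and proves the same two inclusions as the paper, but it takes a genuinely different route for the nontrivial direction, so a short comparison is worth giving.

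For the inclusion $\mathcal{N}_{y^{*}(N,t)}\subset\mathcal{Y}_{t}\vee \sigma(y^{*}(N,s),\, s\le t)$, the paper invokes Jacod's Proposition~3.40, which represents any $A\in\mathcal{N}_{y^{*}(N,t)}$ as $\car_A=\sum_q \Psi_q(T_1(N),\dots,T_q(N))\,\car_{\{T_q(N)\le y^{*}(N,t)<T_{q+1}(N)\}}$; substituting $T_k(N)=y^{*}(N,T_k(Y))$ and $\car_{\{T_q(N)\le y^{*}(N,t)<T_{q+1}(N)\}}=\car_{\{Y(t)=q\}}$ reduces everything to measurability of $y^{*}(N,T_k(Y))\,\car_{\{Y(t)=q\}}$, which the paper settles by the explicit dyadic approximation of $y^{*}$. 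You instead use a Galmarino-type description of the stopped $\sigma$-field, $\mathcal{N}_{y^{*}(N,t)}=\sigma(N(s\wedge y^{*}(N,t)),\, s\ge 0)$ (augmented, implicitly, by $\sigma(y^{*}(N,t))$, which is harmless since that lives in the left-hand side), split on $\{s\ge y^{*}(N,t)\}$ and $\{s< y^{*}(N,t)\}$, and reconstruct the jump times via $T_k(N)=y^{*}(N,T_k(Y))$. This is a more generic, less point-process-specific reduction: it buys you independence from the specific Jacod representation but requires you to justify the Galmarino-type characterization for the uncompleted natural filtration of the counting process, which you take for granted.

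Two points deserve tightening. First, the crucial step, in both proofs, is showing that $y^{*}(N,T_k(Y))$ is $\mathcal{Y}_t\vee\sigma(y^{*}(N,s),\,s\le t)$-measurable. Your phrase ``progressive measurability of $y^{*}$'' is only correct if you mean progressive measurability with respect to the \emph{natural filtration of $y^{*}$ itself}, i.e.\ $(\sigma(y^{*}(N,u),\,u\le s))_{s\ge 0}$, not with respect to the ambient filtration $\mathcal{N}^{y^{*}}$; the latter would land you in a potentially larger $\sigma$-field. The paper avoids the ambiguity by writing out the piecewise-constant (dyadic) approximation of $y^{*}$, which makes the joint measurability explicit. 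Second, your argument for $\mathcal{N}^{y^{*}}_{\infty}=\mathcal{N}_{\infty}$ is compressed: you need the observation that $y^{*}(N,t)\to\infty$ pointwise (which does follow from the homeomorphism property), so that each $N(s)$ is eventually equal to $N(s\wedge y^{*}(N,t))\in\mathcal{N}_{y^{*}(N,t)}$; the paper simply cites Jacod page~326 for this equality. Neither point is fatal, but both should be spelled out for the proof to stand on its own.
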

\begin{proof}
  For any $s\le t$,
  \begin{equation*}
    y^{*}(N,s)\in \mathcal{N}_{y^{*}(N,s)}\subset \mathcal{N}_{y^{*}(N,t)}.
  \end{equation*}
  The definition of $Y$ induces that $Y(t)$ is $\mathcal{N}_{y^{*}(t)}$
  measurable hence
  \begin{equation*}
    \mathcal{Y}_{t}\subset\mathcal{N}_{y^{*}(N,t)}.
  \end{equation*}
  It follows that
  \begin{equation*}
    \mathcal{Y}_{t}\vee \sigma(y^{*}(N,s),\, s\le t)\subset\mathcal{N}_{y^{*}(N,t)}.
  \end{equation*}
  Conversely, let $A\in \mathcal{N}_{y^{*}(N,t)}$, according to
  \cite[Proposition 3.40]{JacodCalculstochastiqueproblemes1979},
  \begin{equation*}
    \car_{A}= \sum_{q=0}^{\infty} \Psi_{q}\Bigl(T_{1}(N),\cdots,T_{q}(N)\Bigr)\ \car_{\{T_{q}(N)\le y^{*}(N,t)<T_{q+1}(N)\}},
  \end{equation*}
  where $\Psi_{q}$ is measurable from $(\R^{+})^{q}$ to $\{0,1\}$. It follows
  from the definition of $Y$ (see \eqref{eq_hawkes_brouillon:sautsDeY}) that
  \begin{equation*}
    \car_{A}= \sum_{q=0}^{\infty} \Psi_{q}\Bigl(  y^{*}\bigl(N,\,T_{k}(Y)\bigr),\, k=1,\cdots,q \Bigr)\ \car_{\{Y(t)=q\}}.
	\end{equation*}
	{Since $y^*$ is continuous, for $u \leq t$, we have
	\begin{align*}
		y^*(N,u)=\lim_{n \rightarrow \infty} \sum_{i=0}^{2^n-1} y^*(N,\frac{i}{2^n}){\mathbf 1}_{[\frac{(i-1)}{2^n}, \frac{i}{2^n})}(u),
	\end{align*}
	thus, for $k \leq q$,
	\begin{align*}
		y^*(N,T_k(Y)) \car_{\{Y(t)=q\}}\in {\mathcal Y}_t \wedge \sigma(y^*(s),~~s\leq t).
	\end{align*}}
	Hence the converse embedding also holds.

	The inclusion $\mathcal{Y}_{\infty}\subset \mathcal{N}^{y^{*}}_{\infty}$ is
  immediate from the first part of the proof. The equality between
  $ \mathcal{N}^{y^{*}}_{\infty} $ and $ \mathcal{N}_{\infty}$ comes from
  \cite[page 326]{JacodCalculstochastiqueproblemes1979}.
\end{proof}
The following theorem is Theorem (10.19) of
\cite{JacodCalculstochastiqueproblemes1979} and Lemma 1.3 of \cite{MR3363697}.
\begin{theorem}\label{thm_hawkes_brouillon:chgtdetemps}
	Let $\dot y \in \pred^{++}(\F)$. For $r\in \pred(\mathcal{F})$ such that its
  stochastic integral is well defined, we have
	\begin{align}
	  \tau_{y^{*}} \left( \int_{0}^{.} r(N,\,s)\dif s \right)  & =\int_{0}^{.} r\bigl(N,\,y^{*}(N,\,s)\bigr) \dif y^{*}(s)\label{eq_hawkes_brouillon:6}
	  \\
	  \intertext{and for the stochastic integrals,}
	  \tau_{y^{*}}\left( \int_{0}^{.} r(N,\,s)\dif N(s)\right) & =\int_{0}^{.} r(N,\,y^{*}(N,\,s))\dif Y(s).
                                                               \label{eq:chgttempsIntSto}
	\end{align}
\end{theorem}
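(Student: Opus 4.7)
The statement is cited from Jacod's treatise, but the identities admit direct proofs that exploit the fact that under $\dot y\in\pred^{++}(\F)$, the function $y$ is almost surely a homeomorphism of $\R^+$ with $y\circ y^{*}=y^{*}\circ y=\Id$, as recorded in \eqref{eq_hawkes_brouillon:3}. I would treat the two identities separately, handling the pathwise (Lebesgue-Stieltjes) case first and then leveraging it to get the stochastic integral identity by a monotone class argument.

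For \eqref{eq_hawkes_brouillon:6}, I would fix $\omega$ outside a $\pi$-null set and simply perform a change of variables. By definition of $\tau_{y^{*}}$,
\[
\tau_{y^{*}}\!\left(\int_0^{\cdot} r(N,s)\dif s\right)(t)=\int_0^{y^{*}(N,t)} r(N,s)\dif s.
\]
Since $y^{*}$ is continuous, strictly increasing and satisfies $y^{*}(N,0)=0$ and $y^{*}(N,\cdot)$ pushes $[0,t]$ onto $[0,y^{*}(N,t)]$, the substitution $s=y^{*}(N,u)$ yields $\dif s = \dif y^{*}(N,u)$ in the Lebesgue-Stieltjes sense, and the right hand side becomes $\int_0^t r(N,y^{*}(N,u))\dif y^{*}(N,u)$. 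The only subtlety is the measurability of $(N,u)\mapsto r(N,y^{*}(N,u))$ as a $\F^{y^{*}}$-predictable process, which follows from the fact that $y^{*}(N,u)$ is an $\F^{y^{*}}$-stopping time (noted after \eqref{eq_hawkes_brouillon:3}) together with the definition of the predictable $\sigma$-algebra.

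For \eqref{eq:chgttempsIntSto}, I would first treat the case where $r$ is the indicator of a predictable rectangle $r(\omega,s)=\car_A(\omega)\car_{(a,b]}(s)$ with $A\in\F_a$. Both sides are then pure jump processes, and it suffices to check that they jump at the same times with the same sizes. The left hand side, evaluated at $t$, equals $\car_A\bigl(N(y^{*}(N,t)\wedge b)-N(y^{*}(N,t)\wedge a)\bigr)$, which jumps exactly when $y^{*}(N,t)$ crosses some $T_n(N)\in(a,b]$. By \eqref{eq_hawkes_brouillon:sautsDeY}, $y^{*}(N,T_k(Y))=T_k(N)$, so $y^{*}(N,\cdot)$ crosses $T_n(N)$ at $t=T_n(Y)$, and the strict monotonicity of $y^{*}$ then gives a bijection between $\{n:T_n(N)\in(a,b]\}$ and $\{k:T_k(Y)\in(a,b]\text{ with }y^{*}(N,T_k(Y))\in(a,b]\}$. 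This matches the jumps of the right hand side $\int_0^{\cdot}\car_A\car_{(a,b]}(y^{*}(N,s))\dif Y(s)$. Extending to arbitrary $r\in\pred(\F)$ with well-defined stochastic integral then proceeds by the standard monotone class / dominated convergence argument, using that $Y-y^{*}$ is a $(\F^{y^{*}},\pi)$-local martingale to transport $L^2$ isometries when needed.

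The main obstacle is bookkeeping: ensuring that the predictability of the integrand is preserved under the time change and that the localization sequences used to define both stochastic integrals are compatible. In particular, if $(\sigma_n)$ is a localizing sequence for the integral on the left, then $(\eta_n)$ with $\eta_n=\inf\{t:y^{*}(t)\ge \sigma_n\}$ localizes the integral on the right, because $\eta_n$ is an $\F^{y^{*}}$-stopping time by \eqref{eq_hawkes_core:25}. Once this is in hand, the jump-matching argument above extends the identity from simple predictable rectangles to the full class without further difficulty.
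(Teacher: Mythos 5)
The paper itself offers no proof of this statement; it is simply imported from Jacod's treatise (Theorem 10.19) and the cited reference. However, the paper \emph{does} give a proof of the closely related Theorem~\ref{thm_hawkes_core:compositionTimeChange}, and your argument mirrors it almost exactly: change of variables for the Lebesgue integral, and reduction to simple predictable integrands $r(N,s)=\car_A(N)\car_{(a,b]}(s)$ for the stochastic integral, followed by a monotone-class extension. The paper's version phrases the jump-matching step by rewriting both sides in terms of stopped processes ($N^a\circ\Ymap=Y^{y(N,a)}$), whereas you match jump times directly via \eqref{eq_hawkes_brouillon:sautsDeY}; these are the same computation in different clothes.

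One minor point worth tightening: you invoke dominated convergence and an $L^{2}$ isometry to pass from rectangles to general $r$, but under the hypothesis $\dot y\in\pred^{++}(\F)$ the process $Y$ need not be a $\pi$-martingale (it is a local martingale under $\pi_y$ after the quasi-invariance change of measure, not under $\pi$), so one cannot directly transport an $L^{2}$ isometry. The cleaner route, which avoids this issue and is what the paper's companion proof implicitly relies on, is to note that for a point process both stochastic integrals are pathwise Lebesgue--Stieltjes integrals on the event $\{N(t)<\infty,\,\forall t\}$, so the identity for simple integrands extends to all predictable $r$ with $\int_0^{\cdot}|r(N,s)|\dif N(s)<\infty$ by ordinary monotone/dominated convergence of Stieltjes integrals, no stochastic-integration isometry needed. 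With that correction your localization remark via $\eta_n=\inf\{t:y^{*}(t)\ge\sigma_n\}$ is also superfluous, though harmless.

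Otherwise the argument is correct and self-contained, which is arguably a gain over the paper's reliance on the citation.
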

We can extend $\tau_{y^{*}}$ to stochastic processes which are measurable maps
from $(\NN\times \R^{+},\, {\cal N}_{\infty}\otimes {\cal B}(\R^{+}))$ to
$(\R,\, {\cal B}(\R))$. For $\mu$ a probability measure on $\NN,$ we denote by
$L^{0}(\NN\times \R^{+},\, \R;\mu)$ the set of stochastic processes equipped
with the topology of convergence in probability. We denote by $\Ymap$ this
extension, which is not to be confused with the process~$Y$. For the sake of
simplicity, we henceforth use the notation $\Ymap$ even for $\tau_{y^{*}}$.
\begin{definition}
  \label{def_hawkes_core:1}
	The map $\Ymap$ is defined by
	\begin{align*}
	  \Ymap\, :\, L^0\bigl(\NN\times \R^{+},\, \R;\mu\bigr) & \longrightarrow   L^0\bigl(\NN\times \R^{+},\, \R;\mu\bigr)           \\
	  U                                                     & \longmapsto\Bigl( (N,\, s)\mapsto U\bigl(Y,\, y^{*}(N,s)\bigr)\Bigr).
	\end{align*}
\end{definition}
For instance, if we denote by $N^{a}$ the process $N$ stopped at time~$a$:
\begin{equation*}
	N^{a}(t)=N\bigl(t\wedge a\bigr),
\end{equation*}
we have
\begin{align*}
	Y^{y(a)}(t) & =Y\left(y(N,a)\wedge t\right)             \\
	            & =N\left(y^{*}(N,\, y(N,a)\wedge t)\right) \\
	            & =N\left(a\wedge y^{*}(N,t)\right).
\end{align*}
On the other hand, we have
\begin{align*}
	y^{*}(N^{a})(t) & =N^{a}\left(y^{*}(N,t)\right)    \\
	                & =N\bigl(y^{*}(N,t)\wedge a\bigr)
\end{align*}
so that
\begin{equation}
	\label{eq:chgt}
	N^{a}\circ \Ymap=Y^{y(N,a)}.
\end{equation}
We then have the following composition rules.
\begin{theorem}

	\label{thm_hawkes_core:compositionTimeChange}
	Let $\dot y \in \pred^{++}(\mathcal{N})$. For $r\in \pred(\mathcal{N})$ such
  that its stochastic integral with respect to $Y$ is well defined, we have
	\begin{align}
	  \left( \int_{0}^{.} r(N,\,s)\dif s \right)\circ \Ymap   & =\int_{0}^{.} r\bigl(Y,\, y^{*}(N,\,s)\bigr) \dif y^{*}(N,\,s)\label{eq_hawkes_brouillon:6b}
	  \\
	  \intertext{and}
	  \left( \int_{0}^{.} r(N,\,s)\dif N(s)\right)\circ \Ymap & =\int_{0}^{.} r(Y,\, y^{*}(N,\,s))\dif Y(s).
                                                              \label{eq:chgttempsIntStob}
	\end{align}
\end{theorem}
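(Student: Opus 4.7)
The plan is to read off both identities directly from Definition \ref{def_hawkes_core:1}, combined with a single change-of-variable step: a classical Lebesgue-Stieltjes substitution for the first identity, and the time-change formula of Theorem \ref{thm_hawkes_brouillon:chgtdetemps} for the second.

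For the first identity \eqref{eq_hawkes_brouillon:6b}, set $V(N,t) = \int_0^t r(N,s)\dif s$. The definition of $\Ymap$ gives
\[ V \circ \Ymap(N,t) \;=\; V(Y, y^*(N,t)) \;=\; \int_0^{y^*(N,t)} r(Y,s)\dif s. \]
Since $\dot y \in \pred^{++}(\mathcal{N})$, the map $s \mapsto y^*(N,s)$ is almost surely a continuous, strictly increasing homeomorphism of $\R^+$ (cf.\ \eqref{eq_hawkes_brouillon:3}). The Lebesgue-Stieltjes change of variable $s = y^*(N,u)$ then produces $\int_0^t r(Y, y^*(N,u))\dif y^*(N,u)$, which is the claimed right-hand side.

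For the second identity \eqref{eq:chgttempsIntStob}, set $W(N,t) = \int_0^t r(N,s)\dif N(s)$. Definition \ref{def_hawkes_core:1} yields
\[ W \circ \Ymap(N,t) \;=\; W(Y, y^*(N,t)) \;=\; \int_0^{y^*(N,t)} r(Y,s)\dif Y(s). \]
The integrand $r(Y,\cdot)$ is $\mathcal{N}$-predictable because $r \in \pred(\mathcal{N})$ and $Y$ is $\mathcal{N}_\infty$-measurable by Lemma \ref{lem:EgaliteFiltration}. One then invokes the time-change formula of Theorem \ref{thm_hawkes_brouillon:chgtdetemps}, applied to the $\mathcal{N}$-predictable integrand $r(Y,\cdot)$ with time change $y^*$: reparameterising the integration interval converts this stochastic integral into $\int_0^t r(Y, y^*(N,s))\dif Y(s)$, as required.

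The main technical subtlety is in performing the substitution $N \mapsto Y$ inside the functional and the time reparameterisation $t \mapsto y^*(N,t)$ simultaneously, and in ensuring that the transformed stochastic integral is well defined (which is part of the hypothesis). The pointwise identity $y^*(N, T_k(Y)) = T_k(N)$ from \eqref{eq_hawkes_brouillon:sautsDeY}, together with the continuity of $y^*$, is the key ingredient that ensures the jump times of $Y$ line up correctly under the change of time, so that the equalities hold pathwise. The Lebesgue case is essentially bookkeeping, whereas the stochastic case requires one to recognise the structure of Theorem \ref{thm_hawkes_brouillon:chgtdetemps} applied to the transformed integrand.
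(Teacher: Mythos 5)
Your treatment of the first identity \eqref{eq_hawkes_brouillon:6b} is correct and identical in spirit to the paper's: apply the definition of $\Ymap$ and then perform the Lebesgue-Stieltjes substitution $s=y^{*}(N,u)$. No issue there.

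Your argument for the second identity \eqref{eq:chgttempsIntStob} has a genuine gap. After writing
$W\circ\Ymap(N,t)=\int_{0}^{y^{*}(N,t)} r(Y,s)\dif Y(s)$,
you invoke Theorem~\ref{thm_hawkes_brouillon:chgtdetemps} to convert this into
$\int_{0}^{t} r(Y,y^{*}(N,s))\dif Y(s)$. But Theorem~\ref{thm_hawkes_brouillon:chgtdetemps} states, for an $\mathcal{N}$-predictable integrand $h$,
$\int_{0}^{y^{*}(N,t)} h(s)\dif N(s)=\int_{0}^{t} h(y^{*}(N,s))\dif Y(s)$:
it changes the \emph{driving} process from $N$ to $Y$, precisely because the jump times $T_{k}(N)$ in $[0,y^{*}(t)]$ are in bijection with the jump times $T_{k}(Y)$ in $[0,t]$ via $y^{*}(N,T_{k}(Y))=T_{k}(N)$. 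You, however, start from an integral already driven by $\dif Y$ over $[0,y^{*}(t)]$; the theorem does not say anything about that object, and it is not a mere reparametrisation of the Stieltjes measure. Indeed, carrying out the honest substitution $s=y^{*}(N,u)$ in $\int_{0}^{y^{*}(t)}r(Y,s)\dif Y(s)$ would produce an integral against $\dif\bigl(Y\circ y^{*}\bigr)(u)$, not $\dif Y(u)$, and these two measures are different (with $r\equiv 1$, the two sides of your claimed conversion are $Y(y^{*}(N,t))$ and $Y(t)$, which disagree). The paper avoids exactly this pitfall: it does not apply Theorem~\ref{thm_hawkes_brouillon:chgtdetemps} to a $\dif Y$-integral; rather, it reduces to simple predictable integrands $r(N,s)=A(N)\car_{(a,b]}(s)$ with $A\in\mathcal{N}_{a}$, for which $\int_{0}^{\cdot}r(N,s)\dif N(s)=A(N)(N^{b}-N^{a})$, and then verifies the identity pathwise by means of the stopping relation \eqref{eq:chgt}, $N^{a}\circ\Ymap=Y^{y(N,a)}$, computing both sides explicitly. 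The general case follows by linearity and density. You should replace your appeal to Theorem~\ref{thm_hawkes_brouillon:chgtdetemps} with this reduction to simple integrands (or, at the very least, justify the conversion of $\int_{0}^{y^{*}(t)}\cdot\,\dif Y$ into $\int_{0}^{t}\cdot\,\dif Y$ directly via the jump-time identity \eqref{eq_hawkes_brouillon:sautsDeY}, keeping careful track of which process drives the integral).
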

\begin{remark}
	These formulas are no longer valid if $\dot y$ is not supposed to be positive
  or if $y$ is not an homeomorphism almost surely.
\end{remark}
\begin{proof}
	Since $y$ defines a diffeomorphism from $\R^{+}$ onto itself, the change of
  variable $u=y(N,s)$ yields
	\begin{align*}
	  \left( \int_{0}^{.} r(N,s)\dif s \right)\circ \Ymap(t) & =\int_{0}^{y^{*}(N,t)} r(Y,\,s)\dif s                              \\
                                                           & =\int_{0}^{t} r\bigl(Y,\,y^{*}(N,\,u)\bigr)\dot y^{*}(N,\,u)\dif u
	\end{align*}
	and \eqref{eq_hawkes_brouillon:6b} holds.

	To prove \eqref{eq:chgttempsIntStob}, it is sufficient to prove it for simple
  predictable process, i.e. we assume that
	\begin{equation*}
	  r(N,s)=A(N)\ \car_{(a,b]}(s)
	\end{equation*}
	for some $A\in \mathcal{N}_{a}$. We have
	\begin{equation*}
	  \int_{0}^{.}r(N,s)\dif N(s)=A(N)\bigl( N^{b}-N^{a} \bigr)
	\end{equation*}
	where $N^{a}$ is the process $N$ stopped at time $a$. On the one hand,
  according to the definition of $\Ymap$ and to \eqref{eq:chgt}, we have
	\begin{equation*}
	  \Bigl(    A(N)\bigl( N^{b}-N^{a} \bigr)\Bigr)\circ \Ymap
	  =A(Y)\Bigl( Y^{y(b)}-Y^{y(a)} \Bigr).
	\end{equation*}
	On the other hand,
	\begin{equation*}
	  r(Y,y^{*}(N,s))=A(Y)\ \car_{(y(N,a),y(N,b)]}(s)
	\end{equation*}
	hence
	\begin{equation*}
	  \int_{0}^{.} r\bigl(Y,\,y^{*}(N,s)\bigr)\dif Y(s)=A(Y)\ \Bigl( Y^{y(N,b)}-Y^{y(N,a)} \Bigr).
	\end{equation*}
	The proof is thus complete.
\end{proof}

\subsection{Quasi-invariance}
\label{sec:girsanov-theorem}

The next result is the exact analog of the quasi-invariance theorem for the
Wiener space (often quoted as the Girsanov theorem): find a perturbation of the
sample paths and a change of probability which compensate each other. As
mentioned above, for point processes, translations by an element of the
Cameron-Martin space are replaced by time changes (see
\cite{DecreusefondPerturbationAnalysisMalliavin1998} and
\cite{BassMalliavinCalculusPure1986} for marked point processes). We first
evaluate how a time change modifies the Radon-Nikodym derivative.

\begin{lemma}\label{lem_hawkes_core:1}
  Let $\dot \kappa$ be a deterministic positive function from $\R^{+}$ into
  itself such that
  \begin{equation*}
    \kappa(t)=\int_{0}^{t}\dot \kappa(s)\dif s<\infty, \ \forall t\ge 0 \text{ and } \lim_{t\to \infty}\kappa(t)=+\infty
    .
  \end{equation*}
  Let $\nu$ be locally absolutely continuous with respect to $\pi_{\kappa}$
  along $\F$ and let $\dot y_{\kappa}$ denote its Girsanov factor, i.e.
  $\nu=\pi_{y_{\kappa}}$. Assume that $\dot y_{\kappa}$ belongs to
  $\pred^{++}_{2}(\F,\pi_{y_{\kappa}})$. 
  Then, $\pi_{y_{\kappa}}$ is locally absolutely continuous with respect to
  $\pi_{\kappa}$ along $\F^{y_{\kappa}^{*}}$ and we have
  \begin{multline}
    \Lambda_{y_{\kappa}^{*}}^{*}(t) :=               \left.\frac{d\nu}{d\pi{}}\right|_{\F^{y_{\kappa}^*}_{t}}
    =  \exp\left( \int_{0}^{t}\log\left( \dfrac{1}{(\kappa\circ y_{\kappa}^{*})'(s)}
      \right)\dif Y(s)+\int_{0}^{ t}
      \Bigl((\kappa\circ y_{\kappa}^{*})'(s)-1 \Bigr)\dif s\right).
    \label{eq_hawkes_brouillon:9}
  \end{multline}
\end{lemma}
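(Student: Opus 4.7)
The plan is to push the Radon--Nikodym density of Theorem \ref{thm_hawkes_brouillon:absolueContinuite} through the random time change $y_\kappa^*$ using the change-of-variable tools of Section \ref{sec:random-time-changes}. The starting point is the formula
$$\Lambda_{y_\kappa}(N,t)=\exp\Bigl(\int_0^t\log\dot y_\kappa(s)\dif N(s)+\int_0^t(1-\dot y_\kappa(s))\dot\kappa(s)\dif s\Bigr),$$
which equals $\frac{\D\nu}{\D\pi_\kappa}\bigr|_{\mathcal N_t}$ on each interval $[0,S_m]$. Since $y_\kappa^*(t)$ is an $\F$-stopping time by \eqref{eq_hawkes_core:25}, and $\F^{y_\kappa^*}_t=\F_{y_\kappa^*(t)}$, optional sampling applied to the local martingale $\Lambda_{y_\kappa}$ (localized along the sequence $(S_m)$ of Theorem \ref{thm_hawkes_brouillon:absolueContinuite}) identifies $\Lambda^*_{y_\kappa^*}(t)$ with $\Lambda_{y_\kappa}(N,y_\kappa^*(t))$.

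Next, I rewrite the two integrals appearing in $\Lambda_{y_\kappa}(N,y_\kappa^*(t))$ via the time-change formulas of Theorem \ref{thm_hawkes_brouillon:chgtdetemps}. The absolutely continuous part is transformed through \eqref{eq_hawkes_brouillon:6} and the jump part through \eqref{eq:chgttempsIntSto}, yielding
$$\int_0^{y_\kappa^*(t)}\log\dot y_\kappa(s)\dif N(s)=\int_0^t\log\dot y_\kappa(y_\kappa^*(s))\dif Y(s),$$
$$\int_0^{y_\kappa^*(t)}(1-\dot y_\kappa(s))\dot\kappa(s)\dif s=\int_0^t(1-\dot y_\kappa(y_\kappa^*(s)))\dot\kappa(y_\kappa^*(s))\dif y_\kappa^*(s).$$

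Finally, to match \eqref{eq_hawkes_brouillon:9}, the integrands must be expressed in terms of $(\kappa\circ y_\kappa^*)'$. The chain rule applied to $y_\kappa\circ y_\kappa^*=\Id$, combined with $y_\kappa'(t)=\dot y_\kappa(t)\dot\kappa(t)$, gives the key relation $\dot y_\kappa(y_\kappa^*(s))=1/(\kappa\circ y_\kappa^*)'(s)$, and an elementary computation yields $(1-\dot y_\kappa(y_\kappa^*(s)))\dot\kappa(y_\kappa^*(s))\,(y_\kappa^*)'(s)=(\kappa\circ y_\kappa^*)'(s)-1$. Substituting these into the preceding two equalities reduces the exponent exactly to the one stated in \eqref{eq_hawkes_brouillon:9}.

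The main obstacle is the interplay of the two localizations needed to make this rigorous: the density $\Lambda_{y_\kappa}$ is only a local martingale under $\pi_\kappa$ and must be stopped along the sequence $(S_m)$ of Theorem \ref{thm_hawkes_brouillon:absolueContinuite}, while the integrand $\log\dot y_\kappa$ is controlled only in the Hellinger sense \eqref{eq_hawkes_brouillon:24} rather than pointwise. Both difficulties are handled by working on the stopping intervals $[0,S_m\wedge y_\kappa^*(t)]$ and passing to the limit using the local absolute continuity of $\nu$ along $\F$, which upgrades the identity to a genuine density formula on $\F^{y_\kappa^*}_t$.
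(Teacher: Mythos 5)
Your proposal follows the same route as the paper's own proof: identify $\Lambda^*_{y_\kappa^*}(t)$ with $\Lambda_{y_\kappa}(N,y_\kappa^*(t))$ via optional sampling (localized along $(S_m)$ and cut off by bounded stopping times $y_\kappa^*(t)\wedge s$), then apply Theorem~\ref{thm_hawkes_brouillon:chgtdetemps} to transform the two integrals, and finally use the chain-rule identity $y_\kappa'(y_\kappa^*(s))(y_\kappa^*)'(s)=1$ with $y_\kappa'=\dot y_\kappa\dot\kappa$ to rewrite the integrands in terms of $(\kappa\circ y_\kappa^*)'$. The localization subtlety you flag at the end is precisely what the paper handles with the nested limits in $s$ and $n$, so your outline is correct and matches the paper's argument.
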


\begin{proof}
  Remark that under the hypothesis
  $\dot y_{\kappa} \in \pred^{++}_{2}(\F,\pi_{y_{\kappa}})$, for almost all
  sample paths, $S_{m}$ is infinite after a certain rank thus for any $t\ge 0$,
  \begin{equation}\label{eq_hawkes_core:17}
    \Lambda_{y_{{\kappa}}}(t)= \exp\left(\displaystyle\int_{0}^{t} \log \dot y(s) \dif N(s)+\displaystyle\int_{0}^{t}\bigl(1-\dot
      y(s)\bigr)\dot\kappa(s)\dif s\right).
  \end{equation}
  For $A\in\F_{y_{\kappa}^{*}(t)}$, by monotone convergence, we have
  \begin{align*}
    {\mathbf E}_{\pi_{y_{\kappa}}}[\car_{A}] & =\lim_{s\to \infty}\espi{\pi_{y_{{\kappa}}}}{\car_{A}\car_{\{y_{\kappa}^{*}(t)\le s\}}}                                                                                                                                                \\
                                             & =\lim_{s\to \infty}\lim_{n\to \infty}\espi{\pi_{{\kappa}}}{\car_{A}\car_{\{s <S_n\}}\car_{\{y_{\kappa}^{*}(t)\le s\}}\ \Lambda_{y_{{\kappa}}}}                                                                                         \\
                                             & =\lim_{s\to \infty}{\lim_{n\to \infty}}\  \espi{\pi_{{\kappa}}}{\car_{A}\car_{\{s <S_n\}}\car_{\{y_{\kappa}^{*}(t)\le s\}}\ \espi{\pi_{{\kappa}}}{\Lambda_{y_{\kappa}}(s)\, \left|\, \F_{y_{\kappa}^{*}(t)\wedge s\wedge S_n}\right.}} \\
                                             & =\lim_{s\to \infty}\lim_{n\to \infty}\espi{\pi_{{\kappa}}}{\car_{A}\car_{\{s<S_n\}}\car_{\{y_{\kappa}^{*}(t)\le s\}}\ {\Lambda_{y_{\kappa}}\Bigl(y_{\kappa}^{*}(t)\wedge s\wedge S_{n}\Bigr)}}\
  \end{align*}
  according to the stopping time theorem applied to the bounded stopping time
  $y_{\kappa}^{*}(t)\wedge s$ and to the martingale
  $\Lambda_{y_{\kappa}}^{S_{n}}$. 
  Thus, in view of \eqref{eq_hawkes_core:17}, we obtain
  \begin{equation*}
    \espi{\pi_y}{\car_{A}}=\espi{\pi}{\car_{A}\ {\Lambda_{y}\Bigl(y_{\kappa}^{*}(t)\Bigr)}}.
  \end{equation*}
  Then, Theorem~\ref{thm_hawkes_brouillon:chgtdetemps} yields
  \begin{align*}
    \Lambda_{y_{{\kappa}}}\Bigl(y_{\kappa}^{*}(t)\Bigr) & =\exp\left( \int_{0}^{t} \log \dot y\bigl(y_{\kappa}^{*}(s)\bigr)\dif Y(s)+\int_{0}^{t} \left(1-\dot y\bigl(y_{\kappa}^{*}(s)\bigr)\right)\, \dot \kappa\bigl(y_{\kappa}^{*}(s)\bigr)\dot y_{\kappa}^{*}(s)\dif s\right).
  \end{align*}
  On the one hand, we have $ y_{\kappa}'=\dot y \dot \kappa$ and on the other
  hand,
  \begin{equation*}
    y_{\kappa}\bigl( y_{\kappa}^{*}(s) \bigr)=s\Longrightarrow  y_{\kappa}'\bigl( y^{*}_{\kappa}(s)\bigr)\ (y_{\kappa}^{*})'(s)=1.
  \end{equation*}
  It follows that
  \begin{equation*}
    \dot y\bigl(y_{\kappa}^{*}(s)\bigr)\  \dot \kappa\bigl(y_{\kappa}^{*}(s)\bigr)\  \bigl(y_{\kappa}^{\kappa}\bigr)'(s)=1
  \end{equation*}
  and that
  \begin{equation*}
    \dot  y\bigl(y_{\kappa}^{*}(s)\bigr)=\frac{1}{(\kappa\circ y_{\kappa}^{*})'(s)}\cdotp
  \end{equation*}
  Thus, \eqref{eq_hawkes_brouillon:9} holds.
\end{proof}
\begin{theorem}[Quasi-invariance]\label{thm_hawkes_brouillon:Pregirsanov}

  Assume that the hypothesis of Lemma~\ref{lem_hawkes_core:1} hold. Then, the
  distribution of the process $Y=\tau_{y_{\kappa}}(N)$ under
  $\pi_{y_{{\kappa}}}$ is $\pi$ . This means that for any bounded measurable
  $f\, :\, (\NN,{\cal N}_{\infty})\to \R$, for any $t\ge 0$,
  \begin{equation}\label{eq_hawkes_core:eq_girsanov}
    \espi{\pi_{\kappa}}{ f(Y^{t})\ \Lambda_{y_{\kappa}^{*}}^{*}(t) }={\mathbf E}_{\pi_{\Id}}[f],
  \end{equation}
  where $Y^{t}$ is the process $Y$ stopped at time~$t$.
\end{theorem}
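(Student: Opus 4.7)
The plan is to combine the local Radon--Nikodym identity from Lemma~\ref{lem_hawkes_core:1} with the uniqueness part of Watanabe's characterization (Theorem~\ref{thm_hawkes_core:watanabe_charac}) to identify the $\pi_{y_{\kappa}}$-law of $Y=\tau_{y_{\kappa}^{*}}(N)$ with the unit rate Poisson law $\pi$, and then to deduce \eqref{eq_hawkes_core:eq_girsanov} by a change of measure.

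First, since $Y$ is $\F^{y_{\kappa}^{*}}$-adapted, the stopped random variable $f(Y^{t})$ is $\F^{y_{\kappa}^{*}}_{t}$-measurable. Lemma~\ref{lem_hawkes_core:1} asserts that $\Lambda_{y_{\kappa}^{*}}^{*}(t)$ is exactly the Radon--Nikodym derivative of $\pi_{y_{\kappa}}$ with respect to $\pi_{\kappa}$ on $\F^{y_{\kappa}^{*}}_{t}$, so
\begin{equation*}
  \espi{\pi_{\kappa}}{f(Y^{t})\,\Lambda_{y_{\kappa}^{*}}^{*}(t)}=\espi{\pi_{y_{\kappa}}}{f(Y^{t})}.
\end{equation*}
It therefore suffices to show that under $\pi_{y_{\kappa}}$ the process $Y$ has, on its natural filtration $\Yy$, the same law as a unit rate Poisson process.

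To this end, I would time-change the fundamental local martingale. By construction of $\pi_{y_{\kappa}}$ (Theorem~\ref{thm_hawkes_brouillon:absolueContinuite}), the process $M:=N-y_{\kappa}$ is a $(\F,\pi_{y_{\kappa}})$-local martingale, localized by $(T_{q}(N))_{q\ge 1}$. Because $\dot y_{\kappa}\in\pred^{++}(\F)$, the map $y_{\kappa}$ is $\pi_{y_{\kappa}}$-almost surely a homeomorphism of $\R^{+}$ (see~\eqref{eq_hawkes_brouillon:3}), so $y_{\kappa}\circ y_{\kappa}^{*}=\Id$ and each $y_{\kappa}^{*}(t)$ is an $\F$-stopping time. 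Applying the time-change theorem for local martingales (Theorem~10.17 of~\cite{JacodCalculstochastiqueproblemes1979}, see also Theorem~\ref{thm_hawkes_brouillon:chgtdetemps}) to $M$ gives that
\begin{equation*}
  M\circ y_{\kappa}^{*} = N\circ y_{\kappa}^{*} - y_{\kappa}\circ y_{\kappa}^{*} = Y - \Id
\end{equation*}
is a $(\F^{y_{\kappa}^{*}},\pi_{y_{\kappa}})$-local martingale, with localizing sequence $(T_{q}(Y))_{q\ge 1}$ thanks to the identity $y_{\kappa}^{*}(T_{q}(Y))=T_{q}(N)$ from~\eqref{eq_hawkes_brouillon:sautsDeY}.

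To conclude, I would invoke Lemma~\ref{lem:EgaliteFiltration}, which ensures $\Yy_{t}\subset\F^{y_{\kappa}^{*}}_{t}$. The identity process, being deterministic, is trivially $\Yy$-predictable; hence $Y-\Id$ remains a $(\Yy,\pi_{y_{\kappa}})$-local martingale by the uniqueness of the predictable compensator, and Theorem~\ref{thm_hawkes_core:watanabe_charac} identifies the $\pi_{y_{\kappa}}$-law of $Y$ on $\Yy_{\infty}=\mathcal{N}_{\infty}$ with $\pi_{\Id}=\pi$. I expect the main technical hurdle to lie in the previous step: verifying that the candidate localizing sequence $(T_{q}(Y))_{q\ge 1}$ tends to infinity $\pi_{y_{\kappa}}$-a.s.\ (which reduces to $y_{\kappa}^{*}\to\infty$, guaranteed by $\dot y_{\kappa}\in\pred^{++}$) and that the stopped martingales $M^{T_{q}(N)}$ time-change correctly to $(Y-\Id)^{T_{q}(Y)}$; the rest of the argument is then a direct concatenation of definitions and previously established facts.
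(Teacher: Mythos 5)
Your proof is correct, and it takes a genuinely different route from the paper. The paper's argument starts from the observation that under $\pi_{\kappa}$ the compensator of $Y$ is $\kappa\circ y_{\kappa}^{*}$, then applies the classical Girsanov theorem to the local martingale
\begin{equation*}
R(t)=\int_{0}^{t}\frac{1}{(\kappa\circ y_{\kappa}^{*})'(s)}\dif Y(s)-t,
\end{equation*}
computing the covariation $[R,\Lambda^{*}_{y_{\kappa}^{*}}]$ jump by jump to show that the drift correction exactly cancels, leaving $Y(t)-t$ as a $(\F^{y_{\kappa}^{*}},\pi_{y_{\kappa}})$-local martingale. You bypass Girsanov entirely: you start directly under $\pi_{y_{\kappa}}$, where $N-y_{\kappa}$ is by construction a $(\F,\pi_{y_{\kappa}})$-local martingale, and apply the time-change theorem for local martingales together with the identity $y_{\kappa}\circ y_{\kappa}^{*}=\Id$ (valid because $\dot y_{\kappa}\in\pred^{++}$) to get $Y-\Id$ as a $(\F^{y_{\kappa}^{*}},\pi_{y_{\kappa}})$-local martingale in one step. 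This is cleaner and shorter; it trades the explicit bracket computation for the abstract time-change theorem, and it makes the quasi-invariance statement look like what it is, a change-of-variables identity rather than a nontrivial absolute-continuity fact.

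Two small points of bookkeeping. First, the passage from the $\F^{y_{\kappa}^{*}}$ local martingale property to the $\Yy$ local martingale property is correct, but the cleaner justification is the tower property applied to the stopped uniformly integrable martingales $\bigl((Y-\Id)^{T_{q}(Y)}\bigr)_{q}$ together with the fact that $Y(s\wedge T_{q}(Y))-(s\wedge T_{q}(Y))$ is $\Yy_{s}$-measurable, rather than ``uniqueness of the predictable compensator'' per se; what you are really using is that if a process is a local martingale for a larger filtration and is adapted to a smaller one, then it is a local martingale for the smaller one. Second, the first display in your proof implicitly uses that $\Lambda_{y_{\kappa}^{*}}^{*}(t)$ is the Radon--Nikodym density of $\pi_{y_{\kappa}}$ with respect to $\pi_{\kappa}$ on $\F^{y_{\kappa}^{*}}_{t}$ (not with respect to $\pi$), which is what Lemma~\ref{lem_hawkes_core:1} actually proves despite its display writing $\D\nu/\D\pi$; your reading is the intended one.
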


\begin{proof}
  Note that by the stopping time theorem and Theorem 10.27 of
  \cite{JacodCalculstochastiqueproblemes1979}
  \begin{equation*}
    \espi{\pi_{\kappa}}{Y(t)-\kappa\bigl(y^{*}_{\kappa}(t)  \bigr)\,\left| \, \mathcal{N}_{y_{\kappa}^{*}({s})}\right.}=Y(s)-\kappa\bigl(y^{*}_{\kappa}(s)  \bigr),
  \end{equation*}
  hence the compensator of $Y$ under $\pi_{\kappa}$ is
  $\kappa\circ y^{*}_{\kappa}$. Thus,
  \begin{align*}
    R(t) & =\int_{0}^{t}  \frac{1}{\bigl( \kappa\circ y^{*}_{\kappa} \bigr)'(s)} \dif Y(s)-\int_{0}^{t }\frac{1}{\bigl( \kappa\circ y^{*}_{\kappa} \bigr)'(s)}\ \bigl( \kappa\circ y^{*}_{\kappa} \bigr)'(s)\dif s \\
         & =\int_{0}^{t}  \frac{1}{\bigl( \kappa\circ y^{*}_{\kappa} \bigr)'(s)} \dif Y(s)-t
  \end{align*}
  is a $(\F^{y_{\kappa}^*},\pi_{\kappa})$ local
  martingale. 
  The standard Girsanov theorem \cite[Theorem
  7.24]{JacodCalculstochastiqueproblemes1979} states that
  \begin{equation*}
    R(t)-\int_{0}^{t}\frac{1}{\Lambda_{y_{\kappa}^*}^*(s)}\dif \,[R,\Lambda_{y^*_{{\kappa}}}^*](s)
  \end{equation*}
  is a $(\F^{y_{\kappa}^*},\pi_{y_{\kappa}})$ local martingale. Note that $R$
  and $\Lambda_{y_{\kappa}^*}^*$ have the same jump times as~$Y$, hence
  \begin{align*}
    \int_{0}^{t}\frac{1}{\Lambda_{y_{\kappa}^*}^*(s)}\dif\, [R,\Lambda_{y_{\kappa}^*}^*](s) & =\sum_{s\le t,\Delta Y(s)\neq 0} \frac{1}{\Lambda_{y_{\kappa}^*}^*(s)}\,\Delta R(s)\ \Delta \Lambda_{y_{\kappa}^*}^*(s)                                 \\
                                                                                            & =\sum_{s\le t,\Delta Y(s)\neq 0} \bigl(1-\frac{\Lambda_{y_{\kappa}^*}^*(s^-)}{\Lambda_{y_{\kappa}^*}^*(s)}\bigr)\ \Delta R(s)                           \\
                                                                                            & =\sum_{s\le t,\Delta Y(s)\neq 0} \bigl(1-\bigl( \kappa\circ y^{*}_{\kappa} \bigr)'(s)\bigr)\frac{1}{\bigl( \kappa\circ y^{*}_{\kappa} \bigr)'(s)}\cdotp
  \end{align*}
  Thus, we have
  \begin{align*}
    R(t)- \int_{0}^{t}\frac{1}{\Lambda_{y^*}^*(s)}\dif \,[R,\Lambda_{y^*}^*](s) & =\int_{0}^{t}\left[ \frac{1}{\bigl( \kappa\circ y^{*}_{\kappa} \bigr)'(s)}-\left(\frac{1}{\bigl( \kappa\circ y^{*}_{\kappa} \bigr)'(s)}-1\right)  \right] \dif Y(s)-t \\
                                                                                & =Y(t)-t.
  \end{align*}
  This means that $Y$ has $(\F^{y_{\kappa}^{*}},\pi_{y_{{\kappa}}})$-compensator
  $(t\mapsto t)$. According to Theorem~\ref{thm_hawkes_core:watanabe_charac},
  $Y$ is an $\F^{y_{\kappa}^{*}}$-adapted unit Poisson process
  under~$\pi_{y_{{\kappa}}}$.
\end{proof}

\begin{remark}
  The process $Y_{\kappa}=(Y\bigl( \kappa(t) \bigr),\ { t \geq 0)}$ is then
  adapted to the filtration
  $\mathcal{G}=(\mathcal{N}^{y_{\kappa}^{*}}_{\kappa(t)},\, t\ge 0)$ and is a
  $\pi_{{\kappa}}$ point process of compensator $\kappa$ and then we have
  \begin{equation*}
    \espi{\pi_{\kappa}}{ f(Y^{t}_{\kappa})\ \Lambda_{y_{\kappa}^{*}}^{*}\bigl(\kappa(t)\bigr) }=\espi{\pi_{\kappa}}{f}.
  \end{equation*}
  This invariance formula is the key to our investigations. In full generality,
  we could continue in such a general setting with any $\dot \kappa$. Actually,
  changing $\dot \kappa$ amounts to change the clock with which time is
  measured. As long as this clock is deterministic and increasing, this does not
  change the essence of the results to come so there is no loss in generality
  but a great gain in simplicity and clarity to focus on the situation where
  $\kappa=\Id$. For the sake of notations, we suppress the subscript $\kappa$
  hereafter.
\end{remark}
The following theorems are often written in terms of $\dot y^{*}$, this can be
translated in terms of~$\dot y$ as shows the next lemma.
\begin{lemma}\label{lem-egal-lambda-lambda*}
  For $\dot y \in \pred^{++}_{2}(\mathcal{N})$,
  \begin{align}
    \Lambda_{y} & =\exp\left( \int_{0}^{\infty} \log\bigl( \dot y(N,s) \bigr)\dif N(s)+\int_{0}^{\infty} \left( 1-\dot y(N,s) \right) \dif s\right)\notag \\
                & =\Lambda^{*}_{y^{*}}  \label{eq_hawkes_core:1}                                                                                          \\
                & =\exp\left( -\int_{0}^{\infty}\log\bigl( \dot y^{*}(s) \bigr)\dif Y(s) +\int_{0}^{\infty}\bigl(\dot y^{*}(s)-1\bigr)\dif s \right
                  )\label{eq_hawkes_core:4}
  \end{align}
  Furthermore, for any $p\ge 1$,
  \begin{align}
    \label{eq_hawkes_core:3}
    \int_{0}^{\infty} \left(  \dot y^{*}(s)\log\bigl( \dot y^{*}(s) \bigr)-\dot y^{*}(s)+1 \right) \dif s & =\int_{0}^{\infty}\left(  \dot y(N,s)-1-\log\bigl( \dot y(N,s) \bigr) \right)\dif s \\
    \int_{0}^{\infty} \left| \frac{1}{\dot y^{*}(s)}-1 \right|^{p}\dot y^{*}(s)\dif s                     & =\int_{0}^{\infty}\left| \dot y(N,s)-1 \right|^{p}\dif s.\label{eq_hawkes_core:5}
  \end{align}
\end{lemma}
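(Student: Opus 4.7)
The plan is to derive everything from the change-of-time formula of Theorem \ref{thm_hawkes_brouillon:chgtdetemps} together with the pointwise identity $\dot y\bigl(y^*(s)\bigr)\,\dot y^*(s)=1$, which holds $\mu$-a.s. because $\dot y\in\pred^{++}(\mathcal N)$ makes $y$ an a.s.\ homeomorphism of $\R^+$ (see \eqref{eq_hawkes_brouillon:3}).

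First, I would establish the first displayed expression for $\Lambda_y$ as a direct consequence of Theorem \ref{thm_hawkes_brouillon:absolueContinuite} specialized to $\kappa=\Id$. The hypothesis $\dot y\in\pred^{++}_2(\mathcal N)$ is exactly the Hellinger integrability condition \eqref{eq_hawkes_brouillon:26}, so the stopping times $S_m$ are eventually infinite $\pi_y$-a.s., and the density formula extends to $t=\infty$. For formula \eqref{eq_hawkes_core:4}, I would apply Lemma \ref{lem_hawkes_core:1} with $\kappa=\Id$: then $\kappa\circ y^*=y^*$ and $(\kappa\circ y^*)'=\dot y^*$, so \eqref{eq_hawkes_brouillon:9} becomes
\begin{equation*}
    \Lambda^*_{y^*}(t)=\exp\left(-\int_0^t\log\dot y^*(s)\,\D Y(s)+\int_0^t(\dot y^*(s)-1)\,\D s\right),
\end{equation*}
and passing to $t\to\infty$ (justified again by $\pred^{++}_2$, which under the change of variable below translates into the integrability of $(1-\sqrt{\dot y^*})^2$) yields \eqref{eq_hawkes_core:4}.

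The equality $\Lambda_y=\Lambda^*_{y^*}$ in \eqref{eq_hawkes_core:1} is the conceptual crux. I would prove it directly by applying the change-of-time identities \eqref{eq_hawkes_brouillon:6} and \eqref{eq:chgttempsIntSto} of Theorem \ref{thm_hawkes_brouillon:chgtdetemps} to the two integrals appearing in the exponent:
\begin{align*}
\int_0^\infty\log\dot y(N,s)\,\D N(s)&=\int_0^\infty\log\dot y\bigl(y^*(N,s)\bigr)\,\D Y(s)=-\int_0^\infty\log\dot y^*(s)\,\D Y(s),\\
\int_0^\infty(1-\dot y(N,s))\,\D s&=\int_0^\infty\bigl(1-1/\dot y^*(s)\bigr)\dot y^*(s)\,\D s=\int_0^\infty(\dot y^*(s)-1)\,\D s,
\end{align*}
using $\dot y(y^*(s))=1/\dot y^*(s)$. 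Alternatively, one can argue that both $\Lambda_y$ and $\Lambda^*_{y^*}$ are densities of $\pi_y$ with respect to $\pi$ on the $\sigma$-algebras $\mathcal N_\infty$ and $\mathcal N^{y^*}_\infty$, which coincide by Lemma \ref{lem:EgaliteFiltration}.

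For the deterministic integral identities \eqref{eq_hawkes_core:3} and \eqref{eq_hawkes_core:5}, I would simply carry out the pathwise change of variables $u=y(\omega,s)$, $\D u=\dot y(s)\,\D s$, equivalently $\D s=\dot y^*(u)\,\D u$ with $\dot y(s)=1/\dot y^*(u)$. For \eqref{eq_hawkes_core:5} this gives the equality in one line; for \eqref{eq_hawkes_core:3} one substitutes into $\dot y^*\log\dot y^*-\dot y^*+1$ and uses $\log(1/\dot y^*)=-\log\dot y$ to obtain $\dot y-1-\log\dot y$ after multiplying by $\dot y$. The only delicate point is to make sure all the improper integrals and stochastic limits are absolutely convergent, which is guaranteed by the membership $\dot y\in\pred^{++}_2(\mathcal N)$; beyond that, the lemma is a bookkeeping of substitutions.
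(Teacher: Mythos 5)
Your proof is correct and follows the same skeleton as the paper's: the first expression for $\Lambda_y$ comes from Theorem~\ref{thm_hawkes_brouillon:absolueContinuite} with $\kappa=\Id$ (extended to $t=\infty$ via the $\pred^{++}_2$ hypothesis), formula~\eqref{eq_hawkes_core:4} from Lemma~\ref{lem_hawkes_core:1} with $\kappa=\Id$, and~\eqref{eq_hawkes_core:3}--\eqref{eq_hawkes_core:5} from the pathwise change of variables $u=y(N,s)$ with $\dot y^{*}(y(N,u))=1/\dot y(N,u)$.

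The one place where you deviate is in the proof of~\eqref{eq_hawkes_core:1}. The paper's primary argument is the one you label as ``alternative'': both $\Lambda_y$ and $\Lambda^{*}_{y^{*}}$ are densities of $\pi_y$ with respect to $\pi$ on terminal $\sigma$-algebras that coincide by Lemma~\ref{lem:EgaliteFiltration} (with Jacod's Theorem~8.1 invoked to identify the limit of the $\F^{y^{*}}$-density martingale with the terminal density). Your primary argument is instead a direct algebraic one: apply the change-of-time identities~\eqref{eq_hawkes_brouillon:6} and~\eqref{eq:chgttempsIntSto} term by term to the two integrals in the exponent of $\Lambda_y$, using $\dot y\bigl(N,y^{*}(N,s)\bigr)=1/\dot y^{*}(s)$, and recognize the exponent of $\Lambda^{*}_{y^{*}}$ on the nose. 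Both routes are valid; yours is more elementary and self-contained (it shows the two exponentials are literally the same functional, not merely equal a.s.\ as densities), whereas the paper's is shorter because it outsources the work to the already-established filtration equality and Jacod's theory. If you take the computational route, be slightly careful to justify letting $t\to\infty$ in the change-of-time identities: you need $y^{*}(t)\to\infty$ (which holds since $y$ is a.s.\ a homeomorphism) and absolute convergence of both exponent integrals, which, as you note, comes from $\dot y\in\pred^{++}_2(\mathcal N,\pi)$. Also keep the first argument of $\dot y$ explicit, i.e.\ write $\dot y\bigl(N,y^{*}(N,s)\bigr)$ rather than $\dot y\bigl(y^{*}(N,s)\bigr)$, since the time change only reparametrizes the time variable.
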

\begin{proof}
  According to Theorem \ref{thm_hawkes_brouillon:absolueContinuite} point 4 for
  $\dot{y}\in {\mathcal P}_2^{++}({\mathcal N})$ and to
  Lemma~\ref{lem:EgaliteFiltration}, we have
  \begin{equation*}
    \Lambda_{y}=\left.\frac{\D \pi_{y}}{\D \pi}\right|_{\mathcal{N}_{\infty}}=\left.\frac{\D \pi_{y}}{\D \pi}\right|_{\mathcal{N}^{y^{*}}_{\infty}}.
  \end{equation*}
  In view of Theorem 8.1 of \cite{JacodCalculstochastiqueproblemes1979}, this
  means $\pi_{y}$ is absolutely continuous with respect to $\pi$ along the
  filtration $({\cal N}^{y^{*}}_{t},\, t\ge 0)$ and \eqref{eq_hawkes_core:1}
  follows. Eqn.~\eqref{eq_hawkes_core:4} is a straightforward consequence
  of~\eqref{eq_hawkes_brouillon:9}. Identities \eqref{eq_hawkes_core:3} and
  \eqref{eq_hawkes_core:5}, since the integrands are non negative, are obtained
  through the change of variable $s=y(N,u)$ and the relation
  \begin{equation*}
    \dot y^{*}\left(  y(N,u) \right)=\frac{1}{\dot y(N,u)}\cdotp
  \end{equation*}
  The proof is thus complete.
\end{proof}
We now prove that well behaved time changes induce locally absolutely continuous
probability on
$\NN$. Recall that $\Ymap$ is defined in Definition~\ref{def_hawkes_core:1} as
the extension to processes of the time change $y^{*}$. In the following, we
identify the configurations $Y=N\circ y^{*}$ and $\Ymap(N).$ The push forward or image
measure of $\pi$ by $\Ymap$ is denoted by $\Ymap^{\#}\pi.$
\begin{theorem}
  \label{thm_hawkes_brouillon:AbsolueContinuiteChangementTemps}
  Let $\dot y$ belong to $\pred^{++}(\mathcal{N})$ such that $\pi_{y}$ is
  equivalent to $\pi$ on $\mathcal{N}_{\infty}$. Then $\Ymap^{\#}\pi$ is
  equivalent to $\pi$ on $\mathcal{N}_{\infty}$.
\end{theorem}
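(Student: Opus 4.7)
The strategy is to combine the quasi-invariance theorem (Theorem~\ref{thm_hawkes_brouillon:Pregirsanov}) with the equivalence hypothesis $\pi_y\sim\pi$. The key observation is that quasi-invariance provides the reverse equivalence for free: it identifies the push-forward of $\pi_y$ (not $\pi$) under $\Ymap$ as $\pi$ itself. The assumed mutual absolute continuity between $\pi$ and $\pi_y$ can then be used to transport null-sets back and forth.

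First, I will verify that the hypothesis of Lemma~\ref{lem_hawkes_core:1}, and hence of Theorem~\ref{thm_hawkes_brouillon:Pregirsanov}, are in force. Since by assumption $\pi_y \ll \pi$ on $\mathcal{N}_{\infty}$, point~2 of Theorem~\ref{thm_hawkes_brouillon:absolueContinuite} yields \eqref{eq_hawkes_brouillon:26}, so $\dot y\in\pred^{++}_{2}(\mathcal{N},\pi_y)$. Applying Theorem~\ref{thm_hawkes_brouillon:Pregirsanov} with $\kappa=\Id$ then gives the pivotal identity
\begin{equation*}
  \Ymap^{\#}\pi_{y}=\pi.
\end{equation*}

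Next, I will exploit measurability of $\Ymap\,:\,\NN\to\NN$ (which is clear from the pathwise definition $Y=N\circ y^{*}$ together with $y^{*}\in\pred(\mathcal{N})$), so that for every $A\in\mathcal{N}_{\infty}$ the preimage $\Ymap^{-1}(A)$ belongs to $\mathcal{N}_{\infty}$. Using the assumed equivalence $\pi\sim\pi_{y}$ on $\mathcal{N}_{\infty}$ for the middle step and the pivotal identity for the last,
\begin{equation*}
  \Ymap^{\#}\pi(A)=0 \;\Longleftrightarrow\; \pi\bigl(\Ymap^{-1}(A)\bigr)=0 \;\Longleftrightarrow\; \pi_{y}\bigl(\Ymap^{-1}(A)\bigr)=0 \;\Longleftrightarrow\; \Ymap^{\#}\pi_{y}(A)=0 \;\Longleftrightarrow\; \pi(A)=0,
\end{equation*}
which gives $\Ymap^{\#}\pi\sim\pi$ on $\mathcal{N}_{\infty}$.

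There is no substantive obstacle in this argument: all the analytic work has been done already in Theorem~\ref{thm_hawkes_brouillon:Pregirsanov}. The only delicate point is the reversal of direction, namely that quasi-invariance naturally transports $\pi_{y}$ to $\pi$ whereas we need information about $\Ymap^{\#}\pi$; this is precisely where the hypothesis $\pi_{y}\sim\pi$ (rather than mere absolute continuity in one direction) is needed, since it is the tool that lets null-sets be transferred between the two measures before and after taking preimages under $\Ymap$.
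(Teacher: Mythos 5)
Your overall route coincides with the paper's: both proofs reduce the problem to the two facts $\Ymap^{\#}\pi_{y}=\pi$ (coming from quasi-invariance) and $\Ymap^{\#}\pi_{y}\sim \Ymap^{\#}\pi$ (coming from the hypothesis $\pi_{y}\sim\pi$), and then chain them together exactly as in your last display. The one place where you go faster than the paper is in obtaining the pivotal identity $\Ymap^{\#}\pi_{y}=\pi$: you invoke the verbal conclusion of Theorem~\ref{thm_hawkes_brouillon:Pregirsanov} outright, whereas the paper treats the precise content of quasi-invariance as the stopped-process formula \eqref{eq_hawkes_core:eq_girsanov} (valid for each finite $t$) and then passes to the limit $t\to\infty$. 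That limit is not free: it requires that the Radon--Nikodym martingale $\bigl(\Lambda^{*}_{y^{*}}(t),\,t\ge 0\bigr)$ be uniformly integrable, which the paper derives from the equivalence hypothesis $\pi_{y}\sim\pi$ via Lemma~\ref{thm_hawkes_core:Equivalence_criterion}. So the hypothesis $\pi_{y}\sim\pi$ is used twice in the paper --- once to transfer null sets through $\Ymap^{-1}$ (as you do), and once to justify the terminal identity --- while in your proposal it enters only once. Your verification via point~2 of Theorem~\ref{thm_hawkes_brouillon:absolueContinuite} that $\dot y\in\pred^{++}_{2}(\mathcal{N},\pi_{y})$ is the right first step and is implicit in the paper; but you should also make the uniform-integrability argument explicit, both because the paper's authors evidently regarded \eqref{eq_hawkes_core:eq_girsanov} as the operative statement, and because without it one has only a Fatou-type inequality rather than the equality $\Ymap^{\#}\pi_{y}=\pi$ on $\mathcal{N}_{\infty}$.
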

\begin{proof}
  Since $\pi_{y}$ is equivalent to $\pi$ on $\mathcal{N}_{\infty}$,
  $ \Ymap^{\#}\pi_{y}$ is equivalent to $\Ymap^{\#}\pi$.

  Furthermore, we have
  \begin{equation*}
    \Lambda^{*}_{y^{*}(t)}=\left.\frac{{\D \pi_{y}}}{\D \pi}\right|_{\mathcal{N}_{y^{*}(t)}},
  \end{equation*}
  hence,
  \begin{equation*}
    \lim_{t\to \infty} \Lambda^{*}_{y^{*}(t)}=\left.\frac{{\D \pi_{y}}}{\D \pi}\right|_{\mathcal{N}_{\infty}}=\Lambda_{y}.
  \end{equation*}
  According to Lemma~\ref{thm_hawkes_core:Equivalence_criterion}, the martingale
  \begin{equation*}
    \left(\Lambda^{*}_{y^{*}(t)},\, t\ge 0  \right)
  \end{equation*}
  is then uniformly integrable and we can let $t$ go to infinity in
  \eqref{eq_hawkes_core:eq_girsanov} to obtain
  \begin{equation*}
    \Ymap^{\#}\pi_{y}=\pi.
  \end{equation*}
  As a consequence,
  \begin{equation*}
    \pi=   \Ymap^{\#}\pi_{y}\sim \Ymap^{\#}\pi.
  \end{equation*}
  The proof is thus complete.
\end{proof}

\section{Invertibility}
\label{sec:invertibility}
\def\Yfrak{\mathbb Y} \def\Zfrak{\mathbb Z} We now define the notion of left and
right invertibility we will analyze in the following. We introduce a new
notation for maps from $\NN$ into itself as in full generality, such a map is
not necessarily induced by a time change $y$. We write in bold the
transformation when it is associated to time change and in blackboard bold for
an abstract transformation of $\NN$. Note that we must be careful when we
compose random variables: If $R$ and $\tilde{R}$ are random variable from
$\Omega$ to a space $E$ which are equal $\P$-a.s., we must ensure that for $S$
and $\tilde{S}$ which are $\P$-almost surely equal random variables from
$\Omega$ to $\Omega$, we still have
\begin{equation}
  \label{eq_hawkes_core:7}
  \P\Bigl( R\circ S\neq \tilde R\circ \tilde S\Bigr)=0.
\end{equation}
But,
\begin{equation*}
  \P\Bigl(R\circ S\neq \tilde R\circ \tilde S\Bigr)=\P\Bigl(R\circ S\neq \tilde R\circ  S \Bigr)=\P_{S}\Bigl(R\neq\tilde R\Bigr).
\end{equation*}
So a sufficient condition for \eqref{eq_hawkes_core:7} to hold is that
$\P_{S}\ll \P$.
\begin{definition}\label{def_hawkes_core:def_invertibility}
  Let~$(\NN,\mu,\, {\cal F}_{\infty})$ be a probability space and $\Yfrak$ a map
  from $\NN$ to $\NN$.

  The map $\Yfrak$ is left invertible if and only if $\Yfrak^{\#}\mu\ll \mu$ and
  there exists $\Zfrak\,:\, \NN\to \NN$ such that
  $\Zfrak\circ \Yfrak=\Id_{\NN}$, $\mu$-a.s.

  The map $\Yfrak$ is right invertible if and only if there exists
  $\Zfrak\, :\, \NN\to \NN$ such that $\Zfrak^{\#}\mu\ll \mu$ and
  $\Yfrak\circ \Zfrak=\Id_{\NN}$, $\mu$-a.s.

  The map $\Yfrak$ is invertible if it is both left and right invertible.
\end{definition}
\begin{lemma}\label{lem:preInversibilite}
  If there exists $\Zfrak$ such that $\Zfrak\circ \Yfrak=\Id_{\NN}$, $\mu$-a.s.
  then $\Yfrak\circ \Zfrak=\Id_{\NN}$, $\Yfrak^{\#}\mu$-a.s.

  If additionally, $\Yfrak^{\#}\mu$ is equivalent to $\mu$ and
  $\Zfrak^{\#}\mu\ll \mu$, then $\Yfrak$ is invertible and $\Zfrak^{\#}\mu$ is
  equivalent to $\mu$.
\end{lemma}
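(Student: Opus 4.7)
The plan is to handle the two statements in order, using only pushforward and almost sure manipulations, and carefully invoking the absolute continuity hypothesis exactly where it is needed to transport $\mu$-a.s. identities from one measure to another.

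For the first assertion, I would start from the hypothesis $\Zfrak\circ\Yfrak=\Id_\NN$ $\mu$-a.s. and simply apply $\Yfrak$ on the left: the set $A=\{\omega:\Zfrak\circ\Yfrak(\omega)=\omega\}$ has $\mu(A)=1$, and on $A$ one has $\Yfrak\circ\Zfrak\circ\Yfrak(\omega)=\Yfrak(\omega)$. In other words, if $B=\{\omega':\Yfrak\circ\Zfrak(\omega')=\omega'\}$, then $A\subset\Yfrak^{-1}(B)$, so that $\Yfrak^{\#}\mu(B)=\mu(\Yfrak^{-1}(B))\ge \mu(A)=1$. This is the only place where the subtlety pointed out in \eqref{eq_hawkes_core:7} matters, and it is handled by using measurable representatives throughout.

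For the second assertion, assume additionally that $\Yfrak^{\#}\mu\sim \mu$ and $\Zfrak^{\#}\mu\ll\mu$. Since $\mu\ll \Yfrak^{\#}\mu$, the almost sure identity $\Yfrak\circ\Zfrak=\Id_\NN$ which we have just established $\Yfrak^{\#}\mu$-a.s. transfers to a $\mu$-a.s. identity. Combined with the assumption $\Zfrak^{\#}\mu\ll\mu$, this exactly matches Definition~\ref{def_hawkes_core:def_invertibility} of right invertibility of $\Yfrak$ with inverse $\Zfrak$, and left invertibility was the hypothesis; hence $\Yfrak$ is invertible.

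It remains to upgrade $\Zfrak^{\#}\mu\ll\mu$ to an equivalence. I would push forward the $\mu$-a.s. identity $\Zfrak\circ\Yfrak=\Id_\NN$ under $\Yfrak$ to get $\Zfrak^{\#}(\Yfrak^{\#}\mu)=\mu$ as measures on $\NN$. Since $\Yfrak^{\#}\mu\ll \mu$ (by the assumed equivalence), the pushforward $\Zfrak^{\#}(\Yfrak^{\#}\mu)$ is absolutely continuous with respect to $\Zfrak^{\#}\mu$; this gives $\mu\ll \Zfrak^{\#}\mu$, and together with the reverse inclusion it yields $\Zfrak^{\#}\mu\sim\mu$. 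The only conceptual point to watch is that pushforward preserves absolute continuity, which is just the change-of-variables formula for indicator functions. I do not expect any real obstacle here; the lemma is essentially a book-keeping exercise made nontrivial only by the need to move between the three measures $\mu$, $\Yfrak^{\#}\mu$, and $\Zfrak^{\#}\mu$ without confusing $\mu$-a.s.\ and $\Yfrak^{\#}\mu$-a.s.\ statements.
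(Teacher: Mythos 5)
Your proof is correct and follows essentially the same route as the paper's: the first assertion is the same pushforward computation written in terms of sets rather than events, and your derivation of $\mu\ll\Zfrak^{\#}\mu$ via $\Zfrak^{\#}(\Yfrak^{\#}\mu)=\mu$ and monotonicity of absolute continuity under pushforward simply packages the paper's indicator-function argument $\espi{\mu}{\car_A\circ\Zfrak}=0\Rightarrow\espi{\mu}{\car_A\circ\Zfrak\circ\Yfrak}=0\Rightarrow\mu(A)=0$ into a one-line lemma.
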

\begin{proof}
  We have
  \begin{align*}
    \Yfrak^{\#}\mu\Bigl( \Yfrak \circ \Zfrak=\Id_{\NN} \Bigr) & =   \mu\Bigl( \Yfrak \circ \Zfrak\circ \Yfrak=\Yfrak \Bigr) \\
                                                              & =\mu\Bigl( \Yfrak=\Yfrak \Bigr)                             \\
                                                              & =1.
  \end{align*}
  The first assertion follows. If the two measures $\Yfrak^{\#}\mu$ and $\mu$
  are equivalent, then $\Yfrak\circ\Zfrak=\Id_{\NN}$ $\mu$-almost-surely thus
  $\Yfrak$ is invertible.

  Let $A$ such that $\Zfrak^{\#}\mu(A)=0$. This means
  \begin{equation*}
    \espi{\mu}{\car_{A}\circ \Zfrak}=0.
  \end{equation*}
  Since $\Yfrak^{\#}\mu$ is equivalent to $\mu$, we get
  \begin{equation*}
    0=    \espi{\mu}{\car_{A}\circ \Zfrak\circ \Yfrak}=\espi{\mu}{\car_{A}}
  \end{equation*}
  hence $\mu\ll \Zfrak^{\#}\mu $ and the equivalence follows.
\end{proof}

We now state a technical lemma which indicates how the time changes of two
inverse maps are related to each other.
\begin{lemma}\label{lem:ConditionInversibilite}
  Let $\dot y\in \pred^{++}(\F)$ such that $\Ymap^{\#}\pi\ll \pi$. For $\dot z$
  a $\pi$-a.s. positive process such that $z^{*}(Y,t)$ is an
  $\F^{y^{*}}\!$-stopping time, we have
  \begin{equation}\label{eq_hawkes_brouillon:compositionChangementsTemps}
    ( \Zmap\circ \Ymap)(t)=N\Bigl(y^{*}\bigl(N,\,z^{*}(Y,t)\bigr)\Bigr).
  \end{equation}
  Then $\Zmap$ is the left inverse of $\Ymap$ if and only if
  \begin{equation}\label{eq_hawkes_brouillon:1}
    z^{*}\bigl(Y,\,t\bigr)=y(N,\,t), \ \forall t, \ \pi\text{-a.s.}
  \end{equation}
  or equivalently
  \begin{equation}\label{eq_hawkes_brouillon:29}
    z\bigl(Y,\,t\bigr)=y^{*}(N,\,t), \ \forall t, \ \pi\text{-a.s.}
  \end{equation}
\end{lemma}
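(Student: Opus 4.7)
The plan is to prove three things in turn: the composition formula \eqref{eq_hawkes_brouillon:compositionChangementsTemps}, the equivalence characterizing when $\Zmap$ is a left inverse of $\Ymap$, and the equivalence of the two expressions \eqref{eq_hawkes_brouillon:1} and \eqref{eq_hawkes_brouillon:29}.

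For \eqref{eq_hawkes_brouillon:compositionChangementsTemps} I would simply unfold Definition \ref{def_hawkes_brouillon:1}: by construction $\Ymap(N)(t)=N(y^{*}(N,t))$ and the analogous time change associated with $\dot z$ gives $\Zmap(Y)(t)=Y(z^{*}(Y,t))$, so composing yields $(\Zmap\circ\Ymap)(N)(t)=N(y^{*}(N,z^{*}(Y,t)))$. The equivalence between \eqref{eq_hawkes_brouillon:1} and \eqref{eq_hawkes_brouillon:29} is a formal manipulation: since $\dot y\in\pred^{++}(\F)$ and $\dot z$ is $\pi$-a.s.\ positive, the primitives $y(N,\cdot)$ and $z(Y,\cdot)$ are homeomorphisms of $\R^{+}$ by \eqref{eq_hawkes_brouillon:3}; composing \eqref{eq_hawkes_brouillon:1} on the left by $z(Y,\cdot)$ and changing variable $s=y(N,t)$ transforms it into \eqref{eq_hawkes_brouillon:29}, and the converse is symmetric.

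The easy direction of the main equivalence, from \eqref{eq_hawkes_brouillon:1} to $\Zmap\circ\Ymap=\Id_{\NN}$, is then immediate from the composition formula: substituting $z^{*}(Y,t)=y(N,t)$ and using $y^{*}(N,y(N,t))=t$ from \eqref{eq_hawkes_brouillon:3} gives $(\Zmap\circ\Ymap)(N)(t)=N(t)$.

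The converse is what I expect to be the main obstacle. Assume $\Zmap\circ\Ymap=\Id_{\NN}$ $\pi$-a.s.\ and set $\phi(t):=y^{*}(N,z^{*}(Y,t))$. The positivity hypotheses on $\dot y$ and $\dot z$ make $\phi$ a continuous strictly increasing bijection of $\R^{+}$ with $\phi(0)=0$, and the composition formula reads $N\circ\phi=N$ as counting functions. Comparing $N(\phi(t))$ and $N(t)$ immediately before and after each atom $T_{k}(N)$ first yields $\phi(T_{k}(N))=T_{k}(N)$ for every $k\ge 1$. Between atoms, however, $N$ is constant so these identities alone do not pin $\phi$ down; the extra ingredient is the stopping-time hypothesis on $z^{*}(Y,t)$ with respect to $\F^{y^{*}}$, combined with the fact that $y(N,t)$ is itself an $\F^{y^{*}}$-stopping time via \eqref{eq_hawkes_core:25}. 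Both $\phi(t)$ and $t$ are thus $\F^{y^{*}}$-stopping times that agree at all atoms of $N$, and a monotone-class argument applied to the continuous non-decreasing processes $\phi$ and $\Id$ then forces $\phi=\Id$ on all of $\R^{+}$, $\pi$-a.s. Applying $y(N,\cdot)$ on both sides of $\phi(t)=t$ gives \eqref{eq_hawkes_brouillon:1}, completing the proof.
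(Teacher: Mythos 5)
Your composition formula, the equivalence between \eqref{eq_hawkes_brouillon:1} and \eqref{eq_hawkes_brouillon:29}, and the easy direction (eqs.\ $\Rightarrow$ left inverse) all coincide with the paper's argument. Where you diverge is precisely the step you flag: the paper passes, without comment, from
\begin{equation*}
  N\bigl(y^{*}(N,z^{*}(Y,t))\bigr)=N(t)\ \forall t\,, \qquad\text{to}\qquad y^{*}\bigl(N,z^{*}(Y,t)\bigr)=t\ \forall t\,,
\end{equation*}
whereas you correctly observe that $N\circ\phi=N$ only forces $\phi$ to fix the atoms $T_{k}(N)$ and leaves $\phi$ undetermined on the constancy intervals. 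Identifying this as the real content of the converse is a genuine improvement in rigor over the paper's one-line ``Thus''.

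However, the fix you propose does not actually close the gap. You argue that both $\phi(t)$ and $t$ are stopping times agreeing at all atoms and that ``a monotone-class argument applied to the continuous non-decreasing processes $\phi$ and $\Id$'' forces $\phi=\Id$. There is no monotone-class theorem of that shape: two families of stopping times that coincide at a locally finite random set need not coincide elsewhere, and no class of sets or functions is being generated here. What the stopping-time hypothesis really buys is \emph{measurability}, not pathwise rigidity by itself. The correct mechanism, if one wants to spell it out, uses the Poisson structure together with the stopping-time property in an essential way: since $\phi(t)$ is a $\mathcal N$-stopping time and $\phi(t)<T_{k}$ for $t<T_{k}$, the restriction of $\phi$ to $(T_{k-1},T_{k})$ can only depend on $T_{1},\dots,T_{k-1}$ (its value lies strictly before the next atom, so the event $\{\phi(t)\le s\}\in\mathcal N_{s}$ cannot see $T_{k}$); on the other hand continuity and $\phi(T_{k})=T_{k}$ force $\phi(t)\uparrow T_{k}$ as $t\uparrow T_{k}$. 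Since $T_{k}-T_{k-1}$ is, conditionally on $T_{1},\dots,T_{k-1}$, exponentially distributed with full support on $(0,\infty)$, a continuous increasing function of $t$ on $(T_{k-1},T_{k})$ that is measurable with respect to $\sigma(T_{1},\dots,T_{k-1})$ and hits $T_{k}$ at $t=T_{k}$ for $\pi$-a.e.\ value of $T_{k}$ must be the identity. Iterating over $k$ gives $\phi=\Id$ $\pi$-a.s., after which your derivation of \eqref{eq_hawkes_brouillon:1} is correct. So: right diagnosis of the gap, but the ``monotone-class'' step needs to be replaced by an argument that actually uses the predictability of the time changes and the law of the inter-arrival times.
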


\begin{proof}
  The first part comes from the identities:
  \begin{align*}
    ( \Zmap\circ \Ymap)(N)(t) & =Y\Bigl(z^{*}(Y,\,t)\Bigr)                     \\
                              & =N\Bigl(\ y^{*}\bigl(N,z^{*}(Y,t)\bigr)\Bigr).
  \end{align*}
  Thus,
  \begin{equation*}
    \Zmap\circ \Ymap=\Id_{\NN} \Longleftrightarrow y^{*}\bigl(N,\ z^{*}(Y,t)\bigr)=t, \ \forall t, \ \pi\text{-a.s.}
  \end{equation*}
  According to~\eqref{eq_hawkes_brouillon:3}, this
  entails~\eqref{eq_hawkes_brouillon:1}.
\end{proof}

We have already seen in Lemma~\ref{lem:EgaliteFiltration} that
${\cal N}^{y^{*}}$ is larger than $\mathcal{Y}$. Actually, there is a stronger
result:
\begin{theorem}\label{thm_hawkes_core:left_invertibility_filtration_equality}
  Let $\dot y\in \pred^{++}(\mathcal{N},\pi)$ such that $\Ymap^{\#}\pi\ll \pi.$
  Then, $\mathcal{N}^{y^{*}}=\mathcal{Y}$ if and only if $\ \Ymap$ admits a left
  inverse.
\end{theorem}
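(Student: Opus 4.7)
The plan is to combine the two lemmas already established in this subsection: Lemma~\ref{lem:EgaliteFiltration}, which gives the decomposition $\mathcal{N}_{y^{*}(N,t)}=\mathcal{Y}_t\vee\sigma(y^{*}(N,s),\,s\le t)$, together with Lemma~\ref{lem:ConditionInversibilite}, which characterizes a left inverse $\Zmap$ of $\Ymap$ via the identity $z(Y,t)=y^{*}(N,t)$ (equivalently \eqref{eq_hawkes_brouillon:29}). In both directions, the point is then to feed the conclusion of one lemma into the hypothesis of the other.

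For the direct implication, I would suppose that $\Ymap$ admits a left inverse $\Zmap$, associated with a time change $z$ that is, by construction, $\mathcal{Y}$-adapted. Lemma~\ref{lem:ConditionInversibilite} yields $z(Y,t)=y^{*}(N,t)$ $\pi$-a.s., so that $y^{*}(N,t)$ is $\mathcal{Y}_t$-measurable for every $t\ge 0$. Plugging this into the decomposition of Lemma~\ref{lem:EgaliteFiltration} makes the auxiliary $\sigma$-field $\sigma(y^{*}(N,s),\,s\le t)$ redundant, which produces $\mathcal{N}_{y^{*}(t)}=\mathcal{Y}_t$ for each $t$, i.e.\ $\mathcal{N}^{y^{*}}=\mathcal{Y}$.

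For the converse, I would assume $\mathcal{N}^{y^{*}}=\mathcal{Y}$. Then, for each $t$, $y^{*}(N,t)\in \mathcal{Y}_t$, so there is a functional $z$ on $\NN\times \R^{+}$ with $z(Y,t)=y^{*}(N,t)$ $\pi$-a.s. Since $\dot y\in\pred^{++}(\mathcal{N})$, $y^{*}$ is continuous, strictly increasing, null at $0$ and tends to $+\infty$, and these properties are inherited by $z(Y,\cdot)$; in particular the inverse relation $z^{*}(Y,t)=y(N,t)$ holds. Using \eqref{eq_hawkes_core:25} together with the continuity of $y^{*}$, $\{z^{*}(Y,t)\le s\}=\{y(N,t)\le s\}=\{y^{*}(N,s)\ge t\}\in \F_{y^{*}(s)}=\F^{y^{*}}_{s}$, so $z^{*}(Y,t)$ is an $\F^{y^{*}}$-stopping time, the measurability hypothesis of Lemma~\ref{lem:ConditionInversibilite} is satisfied, and the equality $z(Y,t)=y^{*}(N,t)$ is precisely its conclusion \eqref{eq_hawkes_brouillon:29}. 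Hence $\Zmap\circ\Ymap=\Id_{\NN}$ $\pi$-a.s.

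The main obstacle is the converse direction: one must not only exhibit $z$ as an abstract functional but also check that the associated time change map $\Zmap$ genuinely fits the framework (continuity, inverse relation, stopping-time property). A secondary delicate point is that compositions of $\pi$-a.s. equal random variables are handled through the condition $\Ymap^{\#}\pi\ll\pi$ recalled in \eqref{eq_hawkes_core:7}, which legitimises writing $(\Zmap\circ\Ymap)(N)=N$ $\pi$-a.s. from the pointwise identity established via Lemma~\ref{lem:ConditionInversibilite}.
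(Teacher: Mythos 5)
Your proof follows the paper's strategy: combine Lemma~\ref{lem:EgaliteFiltration} with Lemma~\ref{lem:ConditionInversibilite}, using \eqref{eq_hawkes_brouillon:29} to translate left-invertibility into $\mathcal{Y}_t$-measurability of $y^{*}(N,t)$ and back. The direct implication is identical to the paper's. In the converse, your assertion that ``there is a functional $z$ on $\NN\times\R^{+}$ with $z(Y,t)=y^{*}(N,t)$ $\pi$-a.s.\ [\dots] and these properties are inherited by $z(Y,\cdot)$'' leaves the real work unsaid: Doob's lemma gives, for each fixed $t$, a functional $\tilde z(\cdot,t)$ with $\tilde z(Y^{t},t)=y^{*}(N,t)$ $\pi$-a.s., but the exceptional set depends on $t$, so one does \emph{not} immediately get a single measurable $z$ with the identity holding simultaneously for all $t$ on a full-measure set, nor continuity of $t\mapsto z(Y,t)$. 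The paper closes exactly this gap by fixing a full-measure set on which the identity holds for all rational $t$ and extending by the continuity of $y^{*}$ (equation \eqref{eq_hawkes_brouillon:30} and the piecewise definition of $z$ that follows). You flag this as ``the main obstacle'' at the end, which is the right diagnosis, but it is not actually carried out in the body of the argument; as written, the converse is incomplete at precisely the step where the paper spends its effort. Your explicit verification that $z^{*}(Y,t)$ is an $\F^{y^{*}}$-stopping time via \eqref{eq_hawkes_core:25} is a useful addition that the paper leaves implicit before invoking Lemma~\ref{lem:ConditionInversibilite}.
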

\begin{proof}
  If $\Ymap$ is left-invertible then there exists $z$ such that
  \eqref{eq_hawkes_brouillon:29} holds :
  \begin{equation*}
    z\bigl(Y,\,t\bigr)=y^{*}(N,\,t), \ \forall t, \ \pi\text{-a.s.}
  \end{equation*}
  This means that $y^{*}(N,s)$ is $\mathcal{Y}_{s}\subset\mathcal{Y}_{t} $
  measurable, thus
  $\mathcal{Y}_{t}\vee \sigma(y^{*}(s),\, s\le t)=\mathcal{Y}_{t}$.

  Conversely, if $\mathcal{N}^{y^{*}}_{t}=\mathcal{Y}_{t}$ then as $y^{*}(N,t)$
  is $ \mathcal{N}^{y^{*}}_{t}$ measurable, it is also
  $\mathcal{Y}_{t}$-measurable. Hence, for any $t\ge 0$, there exists a random
  variable $\tilde z(Y^{t},t)$ such that
  \begin{equation*}
    \tilde z(Y^{t},t)=y^{*}(N,t), \ \pi-\text{a.s.}
  \end{equation*}
  We can then find a full probability set $A$ such that
  \begin{equation}\label{eq_hawkes_brouillon:30}
    \tilde z(Y^{t}(\omega),t)=y^{*}(\omega,t), \forall t\in \Q, \forall \omega\in A.
  \end{equation}
  Let
  \begin{equation*}
    z(Y^{t}(\omega),t)=
    \begin{cases}
      y^{*}(\omega,t)                               & \text{ if } t\in \Q     \\
      \lim_{r_{n}\to t, r_{n}\in \Q}y^{*}(\omega,t) & \text{ if  }t\notin \Q.
    \end{cases}
  \end{equation*}
  By the sample path continuity of $y^{*}$, \eqref{eq_hawkes_brouillon:30} holds
  for any $t\in \R^{+}$ with probability~$1$. Hence,
  Lemma~\ref{lem:ConditionInversibilite} implies that $\Ymap$ admits a left
  inverse.
\end{proof}


We can now state the link between right invertibility of time change on $\NN$
and existence of g-Hawkes processes.
\begin{theorem}
  \label{thm_hawkes_brouillon:HawkesEquivautInverseDroite}
  Let $\dot y\in \pred^{++}(\mathcal{N})$. Then, $\Ymap$ is right invertible
  with a right inverse of the form $Z=N(z^{*})$ where for any $t> 0$,
  $z^{*}(N,t)$ is an $\mathcal{N}$-stopping time if and only if there exists a
  process $Z$, whose law is absolutely continuous with respect to $\pi$ and has
  compensator $y(Z,.)$, i.e. $Z$ is a solution of the equation
  \begin{equation}\label{eq_hawkes_brouillon:7}
    Z(t)=N\Bigl( y(Z,t) \Bigr)
  \end{equation}
  with the additional constraint that $y(Z,t)$ is an $\mathcal{N}$-stopping
  time.
\end{theorem}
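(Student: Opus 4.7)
The plan is to identify $z^{*}(N,\cdot)$ and $y(Z,\cdot)$ as two encodings of the same time change: under this identification, the right-inverse equation $\Ymap\circ\Zmap=\Id_{\NN}$ and the fixed-point equation $Z(t)=N(y(Z,t))$ become equivalent formulations. The key piece of rigidity I rely on is that, since $\dot y\in\pred^{++}(\mathcal N)$, $y$ is $\pi$-a.s.\ a homeomorphism of $\R^{+}$, so $y\circ y^{*}=y^{*}\circ y=\Id$, cf.~\eqref{eq_hawkes_brouillon:3}.

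For sufficiency, suppose $Z$ solves \eqref{eq_hawkes_brouillon:7} with $Z^{\#}\pi\ll\pi$ and $y(Z,t)$ an $\mathcal N$-stopping time. Define $\Zmap(N):=Z$, equivalently $\Zmap(N)=N\circ z^{*}(N,\cdot)$ with $z^{*}(N,\cdot):=y(Z,\cdot)$. A direct computation using $y\circ y^{*}=\Id$ gives
\begin{equation*}
  (\Ymap\circ\Zmap)(N)(t)=Z\bigl(y^{*}(Z,t)\bigr)=N\bigl(y(Z,y^{*}(Z,t))\bigr)=N(t),
\end{equation*}
so $\Ymap\circ\Zmap=\Id_{\NN}$ $\pi$-a.s., while the absolute continuity of $\Zmap^{\#}\pi$ and the stopping-time property of $z^{*}(N,t)$ are the corresponding hypotheses on $Z$.

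For necessity, set $Z:=\Zmap(N)$. From $\Ymap(Z)=N$ one has $N(s)=Z(y^{*}(Z,s))$ for every $s$; substituting $s=y(Z,t)$ yields $Z(t)=N(y(Z,t))$, which is \eqref{eq_hawkes_brouillon:7}. Absolute continuity $Z^{\#}\pi\ll\pi$ is the definition of right invertibility. To identify $y(Z,\cdot)$ as the $\pi$-compensator of $Z$, I apply the optional sampling theorem to the $(\mathcal N,\pi)$-martingale $N-\Id$ at the $\mathcal N$-stopping times $z^{*}(N,t)$: this shows that $Z(t)-z^{*}(N,t)$ is a $(\mathcal N^{z^{*}},\pi)$-local martingale, so $z^{*}(N,\cdot)$ is the compensator of $Z$ in $\mathcal N^{z^{*}}$. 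Matching the two representations $N\circ z^{*}(N,\cdot)=Z=N\circ y(Z,\cdot)$ at the jumps of $Z$ and invoking continuity of $y(Z,\cdot)$ (since $\dot y$ is integrable), together with the constraint that on $[T_{k}(Z),T_{k+1}(Z))$ both $z^{*}(N,\cdot)$ and $y(Z,\cdot)$ must lie in $[T_{k}(N),T_{k+1}(N))$, forces $z^{*}(N,\cdot)=y(Z,\cdot)$ everywhere $\pi$-a.s.; by the uniqueness part of Theorem~\ref{thm_hawkes_core:watanabe_charac}, $y(Z,\cdot)$ is then the compensator of $Z$ and $y(Z,t)$ is an $\mathcal N$-stopping time.

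The main technical difficulty is precisely this last step: promoting the agreement $N\circ z^{*}(N,\cdot)=N\circ y(Z,\cdot)$ -- which pins down the two time changes only at the random jumps of $Z$ -- to an everywhere equality $z^{*}(N,\cdot)=y(Z,\cdot)$. Carrying this through rigorously requires combining the continuity of $y(Z,\cdot)$ inherited from $\dot y\in\pred^{++}(\mathcal N)$ with a monotonicity argument on each interval between consecutive jumps of $Z$, and then an appeal to compensator uniqueness in a filtration large enough to contain both $\mathcal Z$ and the $\mathcal N$-stopping time filtration $\mathcal N^{z^{*}}$.
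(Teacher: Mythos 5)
Your treatment of the sufficiency direction (given a solution $Z$ of \eqref{eq_hawkes_brouillon:7} whose compensator is an $\mathcal N$-stopping time, build $\Zmap$ and verify $\Ymap\circ\Zmap=\Id_{\NN}$) and your derivation of the fixed-point equation in the necessity direction by substituting $s=y(Z,t)$ into $N(s)=Z(y^{*}(Z,s))$ are both correct and match the paper's approach in spirit. However, the step you yourself flag as the ``main technical difficulty'' -- promoting $N\circ z^{*}(N,\cdot)=N\circ y(Z,\cdot)$ to the everywhere equality $z^{*}(N,\cdot)=y(Z,\cdot)$ -- is where your proof breaks down, and your proposed fix does not work.

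The monotonicity-plus-matching-at-jumps argument is simply false: if $f$ and $g$ are two continuous increasing bijections of $\R^{+}$ with $N\circ f=N\circ g$, they need only agree at the (countably many) jump times of $N$, and can differ freely on every interval $(T_{k}(N),T_{k+1}(N))$ where $N$ is flat. The constraint that both $z^{*}(N,\cdot)$ and $y(Z,\cdot)$ map $[T_{k}(Z),T_{k+1}(Z))$ into $[T_{k}(N),T_{k+1}(N))$ does not pin them down there. The compensator-uniqueness appeal is also circular as you set it up: you establish that $z^{*}(N,\cdot)$ is the $\mathcal N^{z^{*}}$-compensator of $Z$ via optional sampling, but to invoke uniqueness against $y(Z,\cdot)$ you would first need to know $y(Z,\cdot)$ is $\mathcal N^{z^{*}}$-predictable and is itself a compensator of $Z$ in that filtration -- and the natural way to see this is via the very stopping-time property you are trying to prove. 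The paper avoids the whole issue: it reads the identity $z^{*}(N,t)=y(Z,t)$ directly off Lemma~\ref{lem:ConditionInversibilite} (equivalences \eqref{eq_hawkes_brouillon:1}--\eqref{eq_hawkes_brouillon:29}, with the roles of $y$ and $z$ exchanged since here $\Ymap$ is the left inverse of $\Zmap$); that lemma works by composing with $y^{*}$ rather than with the non-injective $N$, sidestepping the flat-interval problem. You should invoke Lemma~\ref{lem:ConditionInversibilite} rather than attempt to re-derive the time-change identity from the level of the counting processes.
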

\begin{proof}
  If $\Ymap$ is right invertible then $\Zmap^{\#}\pi\ll \pi$, hence it is
  meaningful to define the process $(y(Z(N),.), \, t\ge 0)$. If $Z=N(z^{*})$ is
  the right inverse of $\Ymap$, then, according
  to~\eqref{eq_hawkes_brouillon:29}, we have
  \begin{equation*}
    z^{*}\bigl(N,\,t\bigr)=y\left(Z(N),\,t\right),\ \pi-\text{a.s.}
  \end{equation*}
  so that, if we set
  \begin{equation*}
    Z(t):=N\Bigl(  z^{*}\left(N,\,t\right) \Bigr),
  \end{equation*}
  we get
  \begin{equation*}
    Z(t)= N\Bigl(  y\left(Z(N),\,t\right) \Bigr).
  \end{equation*}
  This means that $Z$ satisfies~\eqref{eq_hawkes_brouillon:7}. Since
  $z^{*}(N,t)$ is an $\mathcal{N}$-stopping time, so does $y(Z,t)$ and the
  additional constraint is altogether satisfied.

  Conversely, if $Z$ satisfies~\eqref{eq_hawkes_brouillon:7} and $y(Z,t)$ is an
  $\mathcal{N}$-stopping time for any $t\ge 0$, we set
  \begin{equation*}
    z^{*}(N,t)=y(Z,t)
  \end{equation*} so that $z^{*}(N,t)$ is a stopping time for any $t\ge 0$.
  Furthermore, according to \eqref{eq_hawkes_brouillon:1} we have
  \begin{math}
    \Ymap\circ \Zmap=\Id_{\NN},
  \end{math}
  which, with the hypothesis $\Zmap^{\#}\pi\ll \pi$, means that $\Ymap$ is right
  invertible.
\end{proof}
\begin{corollary}
  \label{lem_hawkes_core:egalité_tribu_hawkes}
  If $Z$ satisfies \eqref{eq_hawkes_brouillon:7} and $y(Z,t)$ is an
  $\mathcal{N}$ stopping time for any $t\ge 0$, then
  \begin{equation*}
    \mathcal{Z}_{t}=\mathcal{N}_{y(Z,t)}
  \end{equation*}
  where $\mathcal{Z}$ is the $\sigma$-field generated by the sample path of $Z$.
  Furthermore, $\Zmap^{\#}\pi=\pi_{y}.$
\end{corollary}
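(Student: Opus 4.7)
The plan is to prove the two assertions in sequence. First I would establish the filtration identity $\mathcal{Z}_{t}=\mathcal{N}_{y(Z,t)}$ by adapting Lemma~\ref{lem:EgaliteFiltration} with the roles of $y^{*}$ and $y$ interchanged. Then I would combine it with the random time change formula for compensators to identify $\Zmap^{\#}\pi$ with $\pi_{y}$ via the uniqueness statement in Theorem~\ref{thm_hawkes_core:watanabe_charac}.

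For the filtration identity, since $Z(s)=N(y(Z,s))$ for every $s$ and $y(Z,t)$ is an $\mathcal{N}$-stopping time by hypothesis, the inclusion $\mathcal{Z}_{t}\subset\mathcal{N}_{y(Z,t)}$ is immediate. For the converse, any $A\in\mathcal{N}_{y(Z,t)}$ admits the decomposition
\begin{equation*}
\car_{A}=\sum_{q\ge 0}\Psi_{q}\bigl(T_{1}(N),\ldots,T_{q}(N)\bigr)\,\car_{\{T_{q}(N)\le y(Z,t)<T_{q+1}(N)\}},
\end{equation*}
and equation~\eqref{eq_hawkes_brouillon:7} together with the continuity of $y(Z,\cdot)$ forces $\{T_{q}(N)\le y(Z,t)<T_{q+1}(N)\}=\{Z(t)=q\}$ and $T_{k}(N)=y(Z,T_{k}(Z))$ for $k\le q$. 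Since $\dot y$ is $\mathcal{N}$-predictable, the process $y(Z,\cdot)$ lies in $\pred(\mathcal{Z})$, hence each $y(Z,T_{k}(Z))$ is $\mathcal{Z}_{T_{k}(Z)}$-measurable and, on the event $\{Z(t)=q\}$ where $T_{k}(Z)\le t$ for $k\le q$, actually $\mathcal{Z}_{t}$-measurable. This yields the converse inclusion.

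For the identification of the pushforward, apply \cite[Theorem~10.17]{JacodCalculstochastiqueproblemes1979} to the time change $\eta(t):=y(Z,t)$ of the unit Poisson process $N$ under $\pi$: the process $Z=N\circ \eta$ has $(\mathcal{F}^{\eta},\pi)$-compensator $\eta$. Combined with the filtration identity just proved, which reads $\mathcal{F}^{\eta}_{t}=\mathcal{N}_{y(Z,t)}=\mathcal{Z}_{t}$, I conclude that $Z$ has $\mathcal{Z}$-compensator $y(Z,\cdot)$ under $\pi$. Pushing forward to $\NN$, the canonical process under $\Zmap^{\#}\pi$ then admits $y$ itself as its $\mathcal{N}$-compensator, which is the defining property of $\pi_{y}$ in the second part of Theorem~\ref{thm_hawkes_core:watanabe_charac}. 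The uniqueness statement there forces $\Zmap^{\#}\pi=\pi_{y}$.

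The main obstacle I expect is the predictability bookkeeping in the first step: one must carefully verify that $y(Z,\cdot)$ is genuinely $\mathcal{Z}$-predictable (transferring the $\mathcal{N}$-predictability of the functional $y$ to the random sample path $Z$) and that each $y(Z,T_{k}(Z))$ really lies in $\mathcal{Z}_{t}$ on $\{Z(t)=q\}$. Once this is cleanly argued, the second step reduces to a direct invocation of the time change formula for compensators and the Watanabe uniqueness.
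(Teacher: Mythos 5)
Your proof is correct, and for the filtration identity it takes a genuinely different route than the paper. The paper's proof derives $\mathcal{Z}_{t}=\mathcal{N}_{y(Z,t)}$ abstractly: it notes that the existence of the g-Hawkes process $Z$ means (via Theorem~\ref{thm_hawkes_brouillon:HawkesEquivautInverseDroite}) that $\Zmap$ is left invertible, and then invokes Theorem~\ref{thm_hawkes_core:left_invertibility_filtration_equality}, which characterises left invertibility of a time change precisely by the equality of the transformed filtration with the minimal one. You instead re-run the mechanism of Lemma~\ref{lem:EgaliteFiltration} directly in the present situation, using the same Jacod decomposition of $\mathcal{N}_{\tau}$ indicators, and observe that because $\dot y$ is $\mathcal{N}$-predictable, the substituted process $y(Z,\cdot)$ is $\mathcal{Z}$-predictable; this kills the extraneous $\sigma(y^{*}(N,s),\, s\le t)$ term that Lemma~\ref{lem:EgaliteFiltration} has to join in, so the equality comes out in one pass. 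Your route is more self-contained and makes the role of the hypothesis (that $y(Z,t)$ is a stopping time) more visible; the paper's route is shorter on the page because the heavy lifting has already been done in the invertibility theorems, and it is more systematic in that it exposes the equivalence \emph{left invertibility} $\Leftrightarrow$ \emph{filtration equality}. For the second assertion $\Zmap^{\#}\pi=\pi_{y}$ the two arguments coincide: both push $N-\Id$ through the time change $y(Z,\cdot)$ using \cite[Theorem~10.17]{JacodCalculstochastiqueproblemes1979} to read off that $y(Z,\cdot)$ is the $\mathcal{Z}$-compensator of $Z$, and then conclude by the uniqueness half of Theorem~\ref{thm_hawkes_core:watanabe_charac}. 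One small point you flag yourself and should indeed make explicit in a final write-up: the transfer of $\mathcal{N}$-predictability of $\dot y$ into $\mathcal{Z}$-predictability of $\dot y(Z,\cdot)$ is where the argument earns its keep, and it deserves the monotone-class sentence you sketch (left-continuous $\mathcal{N}_{s}$-measurable generators compose with $Z^{s}$ to give left-continuous $\mathcal{Z}_{s}$-measurable processes).
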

\begin{proof}
  Actually, Theorem~\ref{thm_hawkes_brouillon:HawkesEquivautInverseDroite}
  implies that $\Zmap$ is left invertible and the conclusion follows by Theorem
  \ref{thm_hawkes_core:left_invertibility_filtration_equality}. Consequently,
  according to \eqref{eq_hawkes_brouillon:7}, $Z$ has $y(Z,.)$ as a
  $(\pi,\mathcal{Z})$-compensator. This means that the law of $Z$ has the same
  compensator with respect to its minimal filtration as $N$ under $\pi_{y}$.
  Theorem~\ref{thm_hawkes_core:watanabe_charac} then entails that
  $\Zmap^{\#}\pi=\pi_{y}$.
\end{proof}
\begin{corollary}
  \label{lem_hawkes_core:representationmartingale}
  Let $\dot y\in \pred^{++}_{2}(\mathcal{N},\pi_{y})$. Assume that $\Ymap$ is
  right invertible and let $Z$ be the g-Hawkes process defined in
  Theorem~\ref{thm_hawkes_brouillon:HawkesEquivautInverseDroite}. Then, $Z$
  admits the martingale representation property: for any
  $F\in L^{2}(\NN\to \R,\Zmap^{\#}\pi)$, there exists $v$ a
  $\mathcal{Z}$-predictable process such that, $\pi$-a.s., we have
  \begin{equation*}
    F\circ \Zmap=\espi{\Zmap^{\#}\pi}{F}+\int_{0}^{\infty} v(Z,s)\left( \dif Z(s)-\dot y(Z,s)\dif s \right)
  \end{equation*}
  where
  \begin{equation*}
    v(N,s)=u\Bigl( Y,\, y(N,s) \Bigr).
  \end{equation*}
  Note that $v$ satisfies
  \begin{equation*}
    \espi{\pi}{\int_{0}^{\infty}v(Z,s)^{2}\ \dot y(Z,s)\dif s}<\infty.
  \end{equation*}
\end{corollary}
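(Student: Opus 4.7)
The strategy is to push the classical Poisson predictable representation property (PRP) for $N$ under $\pi$ through the time change $z^{*}(N,\cdot)=y(Z,\cdot)$, which maps the canonical process $N$ onto $Z$. The starting point is that $F\circ\Zmap$ belongs to $L^{2}(\pi)$ with expectation $\espi{\Zmap^{\#}\pi}{F}$, since Corollary~\ref{lem_hawkes_core:egalité_tribu_hawkes} identifies $\Zmap^{\#}\pi=\pi_{y}$. Applying the classical PRP for the standard Poisson process (a direct consequence of Theorem~\ref{thm_hawkes_core:watanabe_charac}) provides an $\mathcal{N}$-predictable process $u$, with $\espi{\pi}{\int_{0}^{\infty}u(N,s)^{2}\dif s}<\infty$, such that
\begin{equation*}
F\circ\Zmap=\espi{\Zmap^{\#}\pi}{F}+\int_{0}^{\infty}u(N,s)\bigl(\dif N(s)-\dif s\bigr),\ \pi\text{-a.s.}
\end{equation*}

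Next, I consider the closed $\mathcal{N}$-martingale $M_{t}=\espi{\Zmap^{\#}\pi}{F}+\int_{0}^{t}u(N,s)(\dif N(s)-\dif s)$ and evaluate it at the $\mathcal{N}$-stopping time $z^{*}(N,t)=y(Z,t)$. Since $\tau_{z^{*}}(N)=Z$ by construction and $\dot z^{*}(N,s)=\dot y(Z,s)$, the change-of-variable identities of Theorem~\ref{thm_hawkes_brouillon:chgtdetemps} applied to the time change $z^{*}$ yield
\begin{equation*}
M_{z^{*}(N,t)}=\espi{\Zmap^{\#}\pi}{F}+\int_{0}^{t}u\bigl(N,y(Z,s)\bigr)\bigl(\dif Z(s)-\dot y(Z,s)\dif s\bigr).
\end{equation*}
The hypothesis $\dot y\in\pred^{++}(\mathcal{N})$ forces $y(Z,t)\to+\infty$, so the left-hand side converges in $L^{2}(\pi)$ to $M_{\infty}=F\circ\Zmap$, giving the desired representation. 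To match the notation of the statement, I set $v(N,s):=u(Y,y(N,s))$; the right-invertibility identity $\Ymap\circ\Zmap=\Id$ then gives $\Ymap(Z)=N$ so that $v(Z,s)=u(N,y(Z,s))$, which is precisely the integrand produced by the computation above.

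Two verifications remain. For $\mathcal{Z}$-predictability, Corollary~\ref{lem_hawkes_core:egalité_tribu_hawkes} identifies $\mathcal{Z}_{s}=\mathcal{N}_{y(Z,s)}$, so that $u(N,y(Z,s))$ is automatically $\mathcal{Z}_{s}$-measurable; left continuity is inherited from the left continuity of $u(N,\cdot)$ and the continuity of $y(Z,\cdot)$. For the isometry, the change of variable $r=y(Z,s)$, which is a homeomorphism of $\R^{+}$ onto itself, gives
\begin{equation*}
\espi{\pi}{\int_{0}^{\infty}v(Z,s)^{2}\dot y(Z,s)\dif s}=\espi{\pi}{\int_{0}^{\infty}u(N,r)^{2}\dif r},
\end{equation*}
and the right-hand side is finite by the Poisson Itô isometry already exploited in the PRP step.

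The main technical obstacle will be the careful application of the change-of-time formula of Theorem~\ref{thm_hawkes_brouillon:chgtdetemps} to both the stochastic and the pathwise integrals simultaneously, together with the $L^{2}(\pi)$-convergence of $M_{z^{*}(N,t)}$ to $F\circ\Zmap$ as $t\to\infty$. The filtration identity $\mathcal{Z}_{t}=\mathcal{N}_{y(Z,t)}$ is the structural fact that makes the transported integrand $v(Z,\cdot)$ belong to the correct $\mathcal{Z}$-predictable class, so that the resulting formula genuinely is a martingale representation with respect to the canonical filtration of the g-Hawkes process $Z$.
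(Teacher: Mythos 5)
Your proof is correct and follows essentially the same route as the paper: start from the Poisson predictable representation of $F\circ\Zmap$ under $\pi$, then transport the representation through the time change that maps $N$ to $Z$. The only presentational difference is that you invoke Theorem~\ref{thm_hawkes_brouillon:chgtdetemps} with the time change $z^{*}(N,\cdot)=y(Z,\cdot)$ and then pass to the limit $t\to\infty$, whereas the paper performs the equivalent computation by hand, matching jump times $T_{k}(N)=y(Z,T_{k}(Z))$ for the stochastic integral and using the change of variable $s=y(Z,r)$ for the compensator term; to make your version fully airtight you would want to note that $\dot z$ belongs to $\pred^{++}(\mathcal{N})$ (which follows from $y(Z,t)$ being an $\mathcal{N}$-stopping time and $\dot y>0$) so that the hypotheses of Theorem~\ref{thm_hawkes_brouillon:chgtdetemps} are indeed met.
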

\begin{proof}
  Since $\Zmap$ is the right inverse of $\Ymap$, $\Ymap$ is the left inverse of
  $\Zmap$. According to Theorem
  \ref{thm_hawkes_core:left_invertibility_filtration_equality},
  $\mathcal{Z}=\mathcal{N}^{z^{*}}$. By the very definition of right
  invertibility, $\Zmap^{\#}\pi\ll \pi$ hence for $F$ a random variable on
  $(\NN,\pi)$, the random variable $F_{Z}:=F\circ \Zmap$ is well defined. When
  $F_{Z}$ is $\pi$-square integrable, we know from the martingale representation
  property for the Poisson process \cite{Jacod2013} that there exists a square
  integrable, predictable process $u$ such that
  \begin{equation*}
    F_{Z}=\espi{\pi}{F_{Z}}+\int_{0}^{\infty} u(s)\left( \dif N(s)-\dif s \right).
  \end{equation*}
  The relation \eqref{eq_hawkes_brouillon:29} 
  means that
  \begin{equation*}
    T_{k}(N)=y\bigl( Z,T_{k}(Z) \bigr).
  \end{equation*}
  We obtain
  \begin{align*}
    \int_{0}^{\infty}u(s)\dif N(s) & =\sum_{k\ge 1} u\bigl( N,\, T_{k}(N) \bigr)                         \\
                                   & =\sum_{k\ge 1}  u\left( Y(Z),\,  y\left( Z,T_{k}(Z) \right) \right) \\
                                   & =\int_{0}^{\infty} \Bigl(u(Y,.)\circ \Zmap \Bigr)(s)\dif Z(s).
  \end{align*}
  We know that $Z$ has $(\pi,\mathcal{N}^{z^{*}})$ compensator $Z^{p}(t)=y(Z,t)$
  hence this process is also the compensator of $Z$ for the filtration
  $\mathcal{Z}$. The change of variable $s=y(Z,r)$ yields
  \begin{align*}
    \int_{0}^{\infty}u(N,s)\dif s & =\int_{0}^{\infty}u\Bigl(N,\,y(Z,r)\Bigr)\dot y(Z,r)\dif r \\
                                  & =\int_{0}^{\infty}v(Z,s)\dif Z^{p}(r).
  \end{align*}
  The proof is thus complete.
\end{proof}
\begin{definition}
  For $\mu$ and $\nu$ two probability measures on $\NN$, the relative entropy of
  $\nu$ with respect to $\mu$ is given by
  \begin{equation*}
    H(\nu\/\mu)=
    \left\{
      \begin{aligned}
        \espi{\nu}{\log\left(\left. \frac{\D\nu}{\D\mu}\right|_{\mathcal{N}_{\infty}} \right)} & \text{ if } \nu\ll\mu \\
        +\infty                                                                                & \text{ otherwise.}
      \end{aligned}
    \right.
  \end{equation*}
\end{definition}
The next theorem is a crucial step towards the final proof. Roughly speaking,
given $\pi_{z}$, we aim to find a time change $y^{*}$ such that
$\Ymap^{\#}\pi=\pi_{z}$. The necessary condition is that $y^{*}$ solves
\eqref{eq_hawkes_brouillon:23} which implies
$\Lambda_{y}=1/\Lambda_{z}\circ \Ymap$. Following the proof of
\cite[Proposition~2.1]{LassalleStochasticinvertibilityrelated2012}, we have:
\begin{theorem}\label{thm_hawkes_brouillon:commeLassalle}
  Let $z\in \pred^{++}(\mathcal{N})$. Assume $\pi_{z}$ is equivalent to $\pi $
  on ${\cal N}_{\infty}$ and that there exists
  $\dot y\in\pred^{++}_{2}(\mathcal{N},\pi)$ such that
  $\Zmap\circ \Ymap=I_{\NN}$ $\pi$-a.s. Then the following assertions are
  equivalent:
  \begin{enumerate}[label=({\roman*}),ref=({\roman*})]
    \item\label{item:1} We have
          \begin{equation}
            \label{eq_hawkes_brouillon:35}
            \espi{\pi}{\Lambda_{y}}=1 \text{ and } \pi(\Lambda_{y}>0)=1.
          \end{equation}
    \item\label{item:2} $\Ymap^{\#}\pi$ is equivalent to $\pi$ {on
          ${\mathcal N}_{\infty}$}.
    \item\label{item:3} $\Zmap$ is left invertible with inverse $\Ymap$.
    \item\label{item:4} $\Ymap^{\#}\pi=\pi_{z}$ {on ${\mathcal N}_{\infty}$} and
          we have the following identity
          \begin{equation}
            \label{eq:egaliteFondamentale}
            \log \bigl(\Lambda_{z}\circ\Ymap\bigr)=\int_{0}^{\infty} \log\bigl(\dot y^{*}(N,\,s)\bigr)\dif Y(s)+\int_{0}^{\infty}\bigl(1-\dot y^{*}(N,\,s)\bigr)\dif s, \ \pi-\text{a.s.}
          \end{equation}
  \end{enumerate}
\end{theorem}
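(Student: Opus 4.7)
The plan is to establish the equivalence as a cycle $\ref{item:1}\Rightarrow\ref{item:2}\Rightarrow\ref{item:3}\Rightarrow\ref{item:4}\Rightarrow\ref{item:1}$, using the quasi-invariance Theorem~\ref{thm_hawkes_brouillon:Pregirsanov} and the push-forward result Theorem~\ref{thm_hawkes_brouillon:AbsolueContinuiteChangementTemps} as the workhorses, together with the algebraic identity $(\Lambda_z\circ\Ymap)\cdot\Lambda_y=1$ that is the $\pi$-a.s.\ consequence of the compositional relation $\Zmap\circ\Ymap=\Id_{\NN}$.

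For $\ref{item:1}\Rightarrow\ref{item:2}$, since $\espi{\pi}{\Lambda_y}=1$, the local martingale $(\Lambda_y(t),\, t\ge 0)$ is uniformly integrable; combined with $\Lambda_y>0$ $\pi$-a.s., Lemma~\ref{thm_hawkes_core:Equivalence_criterion} gives $\pi_y\sim\pi$ on $\mathcal{N}_\infty$, and Theorem~\ref{thm_hawkes_brouillon:AbsolueContinuiteChangementTemps} then yields $\Ymap^\#\pi\sim\pi$. For $\ref{item:2}\Rightarrow\ref{item:3}$, starting from $\Zmap\circ\Ymap=\Id_{\NN}$ $\pi$-a.s., Lemma~\ref{lem:preInversibilite} gives $\Ymap\circ\Zmap=\Id_{\NN}$ $\Ymap^\#\pi$-almost surely, which upgrades to $\pi$-a.s.\ via \ref{item:2}. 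The condition $\Zmap^\#\pi\ll\pi$ in Definition~\ref{def_hawkes_core:def_invertibility} holds automatically: since $\dot z\in\pred^{++}(\mathcal{N})$ and $\pi_z\sim\pi$, Theorem~\ref{thm_hawkes_brouillon:AbsolueContinuiteChangementTemps} applied to $\dot z$ gives $\Zmap^\#\pi\sim\pi$.

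For $\ref{item:3}\Rightarrow\ref{item:4}$, quasi-invariance applied to $\dot z$ yields $\Zmap^\#\pi_z=\pi$; since $\Ymap\circ\Zmap=\Id_{\NN}$ holds $\pi_z$-a.s.\ by equivalence with $\pi$, we get
\begin{equation*}
\Ymap^\#\pi=\Ymap^\#(\Zmap^\#\pi_z)=(\Ymap\circ\Zmap)^\#\pi_z=\pi_z.
\end{equation*}
For the identity~\eqref{eq:egaliteFondamentale}, Lemma~\ref{lem:ConditionInversibilite} supplies $z^{*}(Y,t)=y(N,t)$, whence by the inverse-function rule $\dot z(Y,y(N,t))\,\dot y(N,t)=1$. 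Applying Theorem~\ref{thm_hawkes_core:compositionTimeChange} to transport the integrals defining $\Lambda_z\circ\Ymap$ through the change of variables $s=y(N,u)$, and comparing the result with formula~\eqref{eq_hawkes_core:4} for $\Lambda_y$, one obtains $\log(\Lambda_z\circ\Ymap)=-\log\Lambda_y$, which is exactly~\eqref{eq:egaliteFondamentale}.

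Finally, for $\ref{item:4}\Rightarrow\ref{item:1}$, the identity~\eqref{eq:egaliteFondamentale} combined with~\eqref{eq_hawkes_core:4} gives $(\Lambda_z\circ\Ymap)\Lambda_y=1$ $\pi$-a.s.; since $\Lambda_z>0$ $\pi$-a.s.\ (from $\pi_z\sim\pi$), also $\Lambda_y>0$ $\pi$-a.s., and using $\Ymap^\#\pi=\pi_z$,
\begin{equation*}
\espi{\pi}{\Lambda_y}=\espi{\pi}{\frac{1}{\Lambda_z\circ\Ymap}}=\espi{\pi_z}{\frac{1}{\Lambda_z}}=\espi{\pi}{1}=1.
\end{equation*}
The main technical obstacle is the algebraic identity in the step $\ref{item:3}\Rightarrow\ref{item:4}$: matching the stochastic integral with respect to $Y$ with the absolutely continuous integral against $ds$ requires careful bookkeeping of the jump-time identity $T_k(Y)=y(N,T_k(N))$ and of the chain rule $\dot z^{*}(Y,\cdot)=\dot y(N,\cdot)$ derived from Lemma~\ref{lem:ConditionInversibilite}.
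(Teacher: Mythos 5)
Your proposal follows the paper's proof very closely: the same cyclic chain of implications, the same appeal to Lemma~\ref{thm_hawkes_core:Equivalence_criterion} and Theorem~\ref{thm_hawkes_brouillon:AbsolueContinuiteChangementTemps} for \ref{item:1}$\Rightarrow$\ref{item:2}, the same use of Lemma~\ref{lem:preInversibilite} for \ref{item:2}$\Rightarrow$\ref{item:3}, the same quasi-invariance argument combined with Theorem~\ref{thm_hawkes_core:compositionTimeChange} and the inverse-function rule for \ref{item:3}$\Rightarrow$\ref{item:4}, and the same computation of $\espi{\pi}{\Lambda_y}$ via the change of measure for \ref{item:4}$\Rightarrow$\ref{item:1}. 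The one minor deviation — in \ref{item:2}$\Rightarrow$\ref{item:3} you obtain $\Zmap^{\#}\pi\sim\pi$ directly from the standing hypothesis $\pi_z\sim\pi$ and Theorem~\ref{thm_hawkes_brouillon:AbsolueContinuiteChangementTemps}, whereas the paper derives it from \ref{item:2} via $\Zmap^{\#}\pi\sim\Zmap^{\#}(\Ymap^{\#}\pi)=(\Zmap\circ\Ymap)^{\#}\pi=\pi$ — is equally valid.

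There is, however, one inaccuracy in the step \ref{item:4}$\Rightarrow$\ref{item:1}. You write that $\Lambda_y>0$ $\pi$-a.s.\ follows ``since $\Lambda_z>0$ $\pi$-a.s.''; this is the wrong direction. From $(\Lambda_z\circ\Ymap)\,\Lambda_y=1$ you get $\Lambda_y=1/(\Lambda_z\circ\Ymap)$, so $\Lambda_y>0$ is equivalent to $\Lambda_z\circ\Ymap<\infty$, not to $\Lambda_z\circ\Ymap>0$ (the latter controls $\Lambda_y<\infty$). The correct argument, as in the paper, is:
\begin{equation*}
\pi(\Lambda_y=0)=\pi\bigl(\Lambda_z\circ\Ymap=+\infty\bigr)=\Ymap^{\#}\pi\bigl(\Lambda_z=+\infty\bigr)=\pi_z\bigl(\Lambda_z=+\infty\bigr)=0,
\end{equation*}
where the last equality uses $\espi{\pi}{\Lambda_z}<\infty$ (a density has finite expectation) so $\Lambda_z<\infty$ $\pi$-a.s., and hence $\pi_z$-a.s.\ by the equivalence $\pi_z\sim\pi$. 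It is the \emph{finiteness} of $\Lambda_z$, not its positivity, that yields $\Lambda_y>0$. The positivity of $\Lambda_z$ is what you need instead in the computation $\espi{\pi_z}{1/\Lambda_z}=\espi{\pi}{\Lambda_z\cdot(1/\Lambda_z)}=1$, so both halves of $\Lambda_z\in(0,\infty)$ are used, just in different places. This is a one-line fix and the rest of your argument stands.
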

\begin{proof}
  \noindent \ref{item:1}$\Longrightarrow$\ref{item:2}. According to
  Lemma~\ref{thm_hawkes_core:Equivalence_criterion}, $\pi_{y}$ is equivalent to
  $\pi$ on $\mathcal{N}_{\infty}$. Apply
  Theorem~\ref{thm_hawkes_brouillon:AbsolueContinuiteChangementTemps} and point
  \ref{item:2} follows.

  \noindent \ref{item:2}$\Longrightarrow$\ref{item:3}. 
  According to Lemma~\ref{lem:preInversibilite}, $\Ymap \circ \Zmap=\Id_{\NN}$,
  $\Ymap^{\#}\pi$ a.s. In view of \ref{item:2}, this identity holds $\pi$ almost
  surely. Moreover, still according to~\ref{item:2},
  \begin{equation*}
    \Zmap^{\#}\pi\sim \Zmap^{\#}(\Ymap^{\#}\pi)=\pi
  \end{equation*}
  and \ref{item:3} follows from Lemma~\ref{lem:preInversibilite}.

  \noindent \ref{item:3}$\Longrightarrow$\ref{item:4}. It follows
  from~\ref{item:3} that
  $\pi_z=(\Ymap\circ \Zmap)^{\#}\pi_z=\Ymap^{\#}(\Zmap^{\#}\pi_z)$. Recall that
  the quasi-invariance theorem says that $\Zmap^{\#}\pi_z=\pi$ hence
  $\Ymap^{\#}\pi=\pi_z$.

  According to \eqref{eq_hawkes_brouillon:6b} and \eqref{eq:chgttempsIntStob},
  \begin{equation*}
    \Lambda^{*}_{z^{*}}\circ \Ymap\\=\exp\left( -\int_{0}^{\infty }\log \Bigl(   \dot
      z^{*}\bigl(Y,\,y^{*}(N,s) \bigr) \Bigr)\dif Y(s)+\int_{0}^{\infty} \Bigl( \dot
      z^{*}\bigl(Y,\, y^{*}(N,s)\bigr)-1 \Bigr)\,\dot y^{*}(N,s)\dif s\right).
  \end{equation*}
  Since $\Zmap\circ \Ymap=\Id_{\NN}$, according to \eqref{eq_hawkes_brouillon:1}
  we have
  \begin{equation*}
    z^{*}\bigl(Y,y^{*}(N,t)\bigr)=t.
  \end{equation*}
  By differentiation, we obtain
  \begin{equation}
    \label{eq_hawkes_brouillon:23}    \dot z^{*}\bigl( Y, y^{*}(N,t) \bigr)\times\, \dot y ^{*}(N,t)=1,\, \pi-\text{a.s.}
  \end{equation}
  We obtain
  \begin{equation*}
    \log  (\Lambda_{z}\circ \Ymap)=\int_{0}^{\infty }\log \Bigl(   \dot
    y^{*}\bigl(N,s \bigr) \Bigr)\dif Y(s)+\int_{0}^{\infty} \Bigl( \frac{1}{\dot y^{*}(N,s)}-1 \Bigr)\,\dot y^{*}(N,s)\dif s
  \end{equation*}
  and \eqref{eq:egaliteFondamentale} follows 

  \noindent \ref{item:4}$\Longrightarrow$\ref{item:1}. Recall that
  \begin{equation*}
    \Lambda_{y}=\exp\left(- \int_{0}^{\infty} \log\bigl(\dot y^{*}(N,s)\bigr)\dif Y(s)-\int_{0}^{\infty}\bigl(1-\dot y^{*}(N,s)\bigr)\dif s \right).
  \end{equation*}
  Equation~\eqref{eq:egaliteFondamentale} amounts to
  \begin{align*} {\Lambda_{y}}= {\frac{1}{\Lambda^{*}_{z^{*}}}\circ \Ymap}=\frac{1}{\Lambda_{z}}\circ \Ymap.
  \end{align*}
  Since $\Ymap^{\#}\pi=\pi_{z}$, we obtain
  \begin{align*}
    \espi{\pi}{\Lambda_{y}}
    & =\espi{\pi_{z}}{\frac{1}{\Lambda_{z}}}             \\
    & =\espi{\pi}{\frac{1}{\Lambda_{z}}\ \Lambda_{z}}=1.
  \end{align*}
  Moreover, still from ~\eqref{eq:egaliteFondamentale}, we deduce that
  \begin{align*}
    \pi\left( \Lambda_{y}=0 \right) & =\pi\left( \frac{1}{\Lambda_{z}}\circ \Ymap=0 \right) \\
                                    & =\Ymap^{\#}\pi\left( \Lambda_{z}=+\infty \right)      \\
                                    & =\pi_{z}\left( \Lambda_{z}=+\infty \right).
  \end{align*}
  According to the hypothesis, $\pi_{z}$ is equivalent to $\pi$ and
  Lemma~\ref{thm_hawkes_core:Equivalence_criterion} entails that
  $\espi{\pi}{\Lambda_{z}}$ is finite hence
  \begin{math}
    \pi\left( \Lambda_{z}=+\infty \right)=0 \text{ and then
    } \pi_{z}\left( \Lambda_{z}=+\infty \right)=0.
  \end{math}
  It follows that
  \begin{equation*}
    \pi\left( \Lambda_{y}=0 \right)=0,
  \end{equation*}
  so that \ref{item:1} holds.
\end{proof}
\begin{lemma}
  \label{lem_hawkes_core:2}
  Let $\dot y \in \pred^{++}_{2}({\cal N},\pi_{y})$ such that
  $\espi{\pi}{\Lambda_{y}}=1$ and
  \begin{equation*}
    \espi{\pi}{\int_{0}^{\infty}\m\left( \dot y^{*}(N,s)-1 \right)\dif s}<\infty.
  \end{equation*}
  Then,
  \begin{equation}
    \label{eq_hawkes_core:23}
    \espi{\pi}{-\log \Lambda^{*}_{y^{*}}}\le \espi{\pi}{\int_{0}^{\infty}\m\bigl(\dot y(N,s)-1\bigr)\dif s.}
  \end{equation}
\end{lemma}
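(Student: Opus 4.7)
The plan is to represent $-\log \Lambda^{*}_{y^{*}}$ as a compensated-Poisson stochastic integral, extract a local-martingale contribution whose expectation can be bounded by Fatou, and identify the remaining deterministic integrand via Lemma~\ref{lem-egal-lambda-lambda*} and the change-of-variable formula~\eqref{eq_hawkes_core:3}.

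First, I would invoke Lemma~\ref{lem-egal-lambda-lambda*} to replace $\Lambda^{*}_{y^{*}}$ by $\Lambda_{y}$, and use~\eqref{eq_hawkes_core:1} to write
\[
  -\log \Lambda_{y} \;=\; -\int_{0}^{\infty} \log \dot y(N,s)\,dN(s) \;+\; \int_{0}^{\infty}(\dot y(N,s)-1)\,ds.
\]
Adding and subtracting $\int_{0}^{\infty}\log \dot y(N,s)\,ds$ produces the decomposition
\[
  -\log \Lambda_{y} \;=\; -M_{\infty} \;+\; \int_{0}^{\infty}\bigl(\dot y(N,s)-1-\log \dot y(N,s)\bigr)\,ds,
\]
with $M_{t}:=\int_{0}^{t}\log \dot y(N,s)\,(dN(s)-ds)$ an $(\mathcal N,\pi)$-local martingale.

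Next I would localise using the jump times $T_{q}(N)$ (or the stopping times $S_{m}$ of Theorem~\ref{thm_hawkes_brouillon:absolueContinuite}) so that $M^{T_{q}}$ is a genuine uniformly integrable $\pi$-martingale with zero mean. The hypothesis $\espi{\pi}{\int_{0}^{\infty}\m(\dot y^{*}-1)\,ds}<\infty$ together with the identity~\eqref{eq_hawkes_core:3} yields $\espi{\pi}{\int_{0}^{\infty}(\dot y-1-\log \dot y)\,ds}<\infty$; combining this with the elementary bound $\log \dot y\le \dot y-1$ provides a $\pi$-integrable lower envelope for $-M_{\infty}$. Fatou's lemma then gives $\espi{\pi}{-M_{\infty}}\le 0$, so taking $\pi$-expectation in the decomposition produces
\[
  \espi{\pi}{-\log \Lambda^{*}_{y^{*}}} \;\le\; \espi{\pi}{\int_{0}^{\infty}\bigl(\dot y(N,s)-1-\log \dot y(N,s)\bigr)\,ds}.
\]

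The final step, which I expect to be the main obstacle, is to bound the right-hand side by $\espi{\pi}{\int_{0}^{\infty}\m(\dot y(N,s)-1)\,ds}$. The two integrands $\dot y-1-\log \dot y$ and $\m(\dot y-1)=\dot y\log \dot y-\dot y+1$ are not pointwise comparable with a fixed sign, so this step cannot be settled by a pointwise inequality; I anticipate that it rests on a convexity/Jensen argument applied to $\m$ combined with the constraint $\espi{\pi}{\Lambda_{y}}=1$, so that the excess $\int(2(\dot y-1)-(1+\dot y)\log \dot y)\,ds$ integrates to a non-positive quantity once the density $\Lambda_{y}$ is accounted for. The delicate point is to make this averaging argument rigorous under only the local-$L^{2}$-type integrability provided by $\pred^{++}_{2}(\mathcal N,\pi_{y})$, rather than the much stronger $\pred^{++}_{\infty}$ assumption; I would attempt this by approximating $\dot y$ from $\pred^{++}_{\infty}$, establishing the bound there, and closing by monotone/dominated convergence thanks to the $\m$-integrability hypothesis.
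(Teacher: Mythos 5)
Your decomposition
\[
  -\log\Lambda_{y}=-M_{\infty}+\int_{0}^{\infty}\bigl(\dot y(N,s)-1-\log\dot y(N,s)\bigr)\dif s,
  \qquad M_{t}=\int_{0}^{t}\log\dot y(N,s)\bigl(\dif N(s)-\dif s\bigr),
\]
is a perfectly good dual reformulation of what the paper does: the paper works at the level of $\Lambda^{*}_{y^{*}}$ and the time-changed process $Y$, whose $\pi$-compensator is $y^{*}$, so that $\espi{\pi}{-\log\Lambda_{y^{*}}^{*}(t)}=\espi{\pi}{\int_{0}^{t}\m(\dot y^{*}(s)-1)\dif s}$ directly, without ever introducing $\dot y$; you work on the $N$-side with $\Lambda_{y}=\Lambda^{*}_{y^{*}}$ from Lemma~\ref{lem-egal-lambda-lambda*}. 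The two are equivalent precisely because of the change of variables $s=y(N,u)$ recorded in \eqref{eq_hawkes_core:3}.

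And that is where the proposal goes astray. Having reached
\[
  \espi{\pi}{-\log\Lambda^{*}_{y^{*}}}\;\le\;\espi{\pi}{\int_{0}^{\infty}\bigl(\dot y(N,s)-1-\log\dot y(N,s)\bigr)\dif s},
\]
you are \emph{already done}: by \eqref{eq_hawkes_core:3} the right-hand side equals $\espi{\pi}{\int_{0}^{\infty}\m(\dot y^{*}(s)-1)\dif s}$, which is exactly the finite quantity in the hypothesis and exactly what the proof of Theorem~\ref{thm_hawkes_core:left_invertible_entropy} invokes this lemma for. The argument of $\m$ on the right-hand side of \eqref{eq_hawkes_core:23} as printed should read $\dot y^{*}(N,s)-1$, not $\dot y(N,s)-1$; this is a misprint, consistent with the hypothesis, with the body of the paper's proof, and with the application. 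Your ``final step'' is therefore chasing a ghost, and you were right to be suspicious of it: $a-1-\log a$ and $\m(a-1)=a\log a-a+1$ are genuinely incomparable (already for $a=e$ versus $a=1/e$), so no convexity or averaging trick will convert one into the other in general. There is no such step to do.

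A secondary remark: your localisation/Fatou argument for $\espi{\pi}{-M_{\infty}}\le 0$ is not yet watertight. To push the zero-mean property of $M^{T_{q}}$ through the limit in the direction you need, you must bound $M_{T_{q}}$ from \emph{above} by a single $\pi$-integrable variable, and the bound you invoke ($\log\dot y\le\dot y-1$) does not obviously yield one. The paper avoids this by observing that $\psi(x)=x-\log x$ is non-negative and convex, so $\psi(\Lambda^{*}_{y^{*}}(t))$ is a non-negative submartingale with bounded mean, and combining this with the uniform integrability of $\Lambda^{*}_{y^{*}}(t)=\Lambda_{y}(t)$ (supplied by the hypothesis $\espi{\pi}{\Lambda_{y}}=1$) to get $L^{1}$ convergence of $-\log\Lambda^{*}_{y^{*}}(t)$. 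If you keep your dual formulation, you should import that same uniform-integrability device rather than hope for a pointwise envelope on $M_{T_q}$.
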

\begin{proof}
  We have already seen that
  \begin{equation*}
    \dot y^{*}(t)=\frac{1}{\dot y\bigl(y^{*}(t)\bigr)}=\tau_{y^{*}}\left(\frac{1}{\dot y}\right)(t).
  \end{equation*}
  By hypothesis, $\dot y $ is $\mathcal{N}$-predictable, thus, according to
  \cite[Theorem 10.17(c)]{JacodCalculstochastiqueproblemes1979}, $\dot y^{*}$ is
  predictable with respect to the filtration $\mathcal{N}^{y^{*}}$.

  From \eqref{eq_hawkes_core:4}, we have
  \begin{align*}
    \esp{-\log \Lambda_{y^{*}}^{*}(t)} & =\esp{\int_{0}^{t}\log\bigl(\dot y^{*}(s)\bigr)\dif Y(s)+\int_{0}^{t}\bigl(1-\dot y^{*}(s)\bigr)\dif s} \\
                                       & =\esp{\int_{0}^{t}\dot y^{*}(s)\log\bigl(\dot y^{*}(s)\bigr)+1-\dot y^{*}(s)\dif s}                     \\
                                       & \le \esp{\int_{0}^{\infty}\m\bigl(\dot y^{*}(s)-1\bigr)\dif s},
  \end{align*}
  since $Y$ has $\mathcal{N}^{y^{*}}$ compensator $y^{*}$ and $\log \dot y^{*}$
  is $\mathcal{N}^{y^{*}}$ predictable. It remains to prove that we can pass to
  the limit in the left-hand-side. Consider the non-negative, convex function
  $\psi(x)=x-\log x$. From Fatou's Lemma, we have
  \begin{align*}
    \esp{\psi(\Lambda_{y^{*}}^{*})} & \le \liminf_{t\to \infty}  \  \esp{\psi(\Lambda_{y^{*}}^{*}(t))}   \\
                                    & \le 1+ \esp{\int_{0}^{\infty}\m\bigl(\dot y^{*}(s)-1\bigr)\dif s}.
  \end{align*}
  This means that the non-negative submartingale
  $(\psi(\Lambda_{y^{*}}^{*}(t)),\, t\ge 0)$ is uniformly integrable. From
  \eqref{eq_hawkes_core:1}, we know that $\Lambda_{y}=\Lambda_{y^{*}}^{*}$ hence
  in view of the first hypothesis, the non-negative martingale
  $(\Lambda_{y^{*}}^{*}(t),\, t\ge 0)$ is uniformly integrable,
  \begin{equation*}
    -\log \Lambda_{y^{*}}^{*}(t) \xrightarrow[t\to\infty]{L^{1} } -\log \Lambda_{y^{*}}^{*}.
  \end{equation*}
  This means that
  \begin{equation*}
    1+ \esp{-\log \Lambda_{y^{*}}^{*}}\le 1+ \esp{\int_{0}^{\infty}\m\bigl(\dot y^{*}(s)-1\bigr)\dif s}.
  \end{equation*}
  The proof is thus complete.
\end{proof}
We arrive now at the main result of this section, the entropic criterion of
(left) invertibility. Before going into the details of the proof, we explain its
main idea. At the very beginning, we are given $y$ an increasing predictable
process whose inverse can be used as a time change. From $y$, one can construct
$\Lambda_{y}$ and $\Ymap^{\#}\pi$. Note carefully that $\Lambda_{y}$ is not the
density of $\Ymap^{\#}\pi$ with respect to $\pi$. Actually, under the hypothesis
we made on $y$, $\Ymap^{\#}\pi$ is absolutely continuous with respect to $\pi$
and there exists $z$ such that
\begin{equation*}
  \frac{\D \Ymap^{\#}\pi}{\D \pi}=\Lambda_{z}.
\end{equation*}
If we have
\begin{equation}
  \label{eq_hawkes_core:15}
  \log \Lambda_{z}\circ \Ymap=-\log \Lambda_{y},
\end{equation}
then, from the representations of these quantities (see
Theorem~\ref{thm_hawkes_core:compositionTimeChange}), we see that
\begin{equation*}
  z^{*}\left(Y, y^{*}(N,t)\right)=1,
\end{equation*}
which according to Lemma \ref{lem:ConditionInversibilite} means that
$\Zmap\circ \Ymap=\Id_{\NN}$. Before going further, recall that at the terminal
time $\Lambda_{y}=\Lambda_{y ^{*}}^{*}$ but at time $t$, $\Lambda_{y}(t)$ is
$\mathcal{N}_{t}$ measurable whereas $\Lambda_{y ^{*}}^{*}(t)$ is
$\mathcal{N}_{y^{*}(N,t)}$ adapted. For the sake of clarity, we write all the
conditions in terms of $\Lambda_{y}$ even if in the following proofs we need to
use $\Lambda^{*}_{y^{*}}$.

Now, on the one hand, quasi-invariance and Fatou lemma induce that
\begin{equation}
  \label{eq_hawkes_core:16}
  \log \Lambda_{z}\circ \Ymap\le -\log \espi{\pi}{\Lambda_{y}\/ \mathcal{Y}_{\infty}}.
\end{equation}
On the other hand, classical computations and Jensen inequality show that
\begin{multline}
  \label{eq_hawkes_core:9}
  H(\Ymap^{\#}\pi\/ \pi)=\espi{\pi}{\log \Lambda_{z}\circ \Ymap}\le -\espi{\pi}{\log \espi{\pi}{\Lambda_{y}\/\mathcal{Y}_{\infty}}}\\\le -\espi{\pi}{\log \Lambda_y}=\espi{\pi}{\int_{0}^{\infty}\m(\dot y^{*}(s)-1)\dif s}.
\end{multline}
If the entropic criterion is satisfied, then all the inequalities of
\eqref{eq_hawkes_core:9} are indeed transformed into equalities. The equality
condition in the conditional Jensen inequality implies that
\begin{equation}
  \label{eq_hawkes_core:18}
  \espi{\pi}{\Lambda_{y}\/ \mathcal{Y}_{\infty}}=\Lambda_{y},
\end{equation}
which in turn entails that \eqref{eq_hawkes_core:16} becomes
\begin{equation*}
  \log \Lambda_{z}\circ \Ymap\le -\log \Lambda_{y}.
\end{equation*}
In view of the first inequality of \eqref{eq_hawkes_core:9}, we obtain the
identity \eqref{eq_hawkes_core:15} and the invertibility follows.

Remark that \eqref{eq_hawkes_core:18} says that $\Lambda_{y}$, which is a priori
$\mathcal{N}_{\infty}=\mathcal{N}^{y^{*}}_{\infty}$ measurable, is actually
$\mathcal{Y}_{\infty}$ measurable which in view of Theorem
\ref{thm_hawkes_core:left_invertibility_filtration_equality}, is a necessary
condition for $\Ymap$ to be left-invertible.
\begin{theorem}\label{thm_hawkes_core:left_invertible_entropy}
  Let $\dot y \in \pred^{++}_{2}({\cal N},\pi_{y})$ such that
  $\espi{\pi}{\Lambda_{y}}=1$ and
  \begin{equation*}
    \espi{\pi}{\int_{0}^{\infty}\m\left( \dot y^{*}(s)-1 \right)\dif s}<\infty.
  \end{equation*}
  If $\Ymap^{\#}\pi\ll \pi$, we have
  \begin{equation}\label{eq_hawkes_core:pre_entropy}
    H( \Ymap^{\# }\pi\/\pi)\leq {\mathbf E}\left[\int_0^{\infty} \m\bigl(\dot{y}^*(s) -1\bigr) \dif s\right].
  \end{equation}
  {Moreover}, the map $\Ymap$ is left invertible
  if and only if
  \begin{equation}\label{eq_hawkes_core:11}
    H(\Ymap^{\#}\pi\/ \pi)=\espi{\pi}{\int_{0}^{\infty}\m\left( \dot y^{*}(s)-1 \right)\dif s}.
  \end{equation}
\end{theorem}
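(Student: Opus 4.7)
The plan is to carry out the chain of inequalities previewed in the discussion immediately preceding the statement. First, the hypothesis $\Ymap^{\#}\pi \ll \pi$ combined with Jacod's theorem (Theorem~\ref{thm_hawkes_brouillon:absolueContinuite}) produces a predictable process $\dot z$ such that $\Ymap^{\#}\pi = \pi_{z}$ with density $\Lambda_{z}$ on $\mathcal{N}_\infty$, so that the usual change-of-variable for relative entropy gives $H(\Ymap^{\#}\pi \mid \pi) = \espi{\pi}{\log \Lambda_{z}\circ \Ymap}$. Next, I would combine the quasi-invariance formula \eqref{eq_hawkes_core:eq_girsanov} at finite time $t$ with the uniform integrability of $(\Lambda_{y^{*}}^{*}(t))_t$ furnished by $\espi{\pi}{\Lambda_{y}}=1$ and Lemma~\ref{thm_hawkes_core:Equivalence_criterion}, passing to the limit $t \to \infty$, to get $\espi{\pi}{g(Y)\,\Lambda_{y}} = \espi{\pi}{g}$ for bounded measurable $g$. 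Combining this with the push-forward identity $\espi{\pi}{g(Y)} = \espi{\pi}{g\,\Lambda_{z}}$ and noting that $\pi(\Lambda_{z}\circ\Ymap = 0) = \pi_{z}(\Lambda_{z}=0) = 0$ yields the key $\pi$-a.s.\ identity
\begin{equation*}
\espi{\pi}{\Lambda_{y}\mid \mathcal{Y}_{\infty}} = \frac{1}{\Lambda_{z}\circ \Ymap}.
\end{equation*}

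Applying $-\log$, taking $\espi{\pi}{\cdot}$, and invoking the conditional Jensen inequality for the strictly convex function $-\log$ yields
\begin{equation*}
H(\Ymap^{\#}\pi \mid \pi) \;=\; -\espi{\pi}{\log \espi{\pi}{\Lambda_{y}\mid \mathcal{Y}_{\infty}}} \;\leq\; \espi{\pi}{-\log \Lambda_{y}}.
\end{equation*}
Then \eqref{eq_hawkes_core:4}, the fact that $Y$ has $\pi$-compensator $y^{*}$, and the $L^{1}$-convergence argument from the proof of Lemma~\ref{lem_hawkes_core:2} (which becomes an equality under the finite-entropy hypothesis by monotone convergence) identify $\espi{\pi}{-\log\Lambda_{y}} = \espi{\pi}{\int_{0}^{\infty}\m(\dot y^{*}(s)-1)\dif s}$, giving the bound \eqref{eq_hawkes_core:pre_entropy}. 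Equality in \eqref{eq_hawkes_core:11} forces equality in the conditional Jensen step, which by strict convexity of $-\log$ forces $\Lambda_{y}$ to be $\mathcal{Y}_{\infty}$-measurable; combined with the identity from Step~2 this yields $\Lambda_{y} = (\Lambda_{z}\circ\Ymap)^{-1}$, i.e.\ $\log\Lambda_{z}\circ\Ymap = -\log\Lambda_{y}$ pointwise. Expanding both sides using the composition rules of Theorem~\ref{thm_hawkes_core:compositionTimeChange} and \eqref{eq_hawkes_core:4}, and matching the $\dif Y$ and $\dif s$ parts, produces $\dot z^{*}(Y,y^{*}(N,s))\,\dot y^{*}(N,s)=1$, whence $z^{*}(Y,y^{*}(N,t))=t$, so Lemma~\ref{lem:ConditionInversibilite} gives $\Zmap\circ \Ymap = \Id_{\NN}$; together with $\Ymap^{\#}\pi \ll \pi$ this is exactly the left-invertibility of $\Ymap$. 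Conversely, if $\Ymap$ is left invertible, running the same computation in reverse (or equivalently invoking the equivalence $(iii)\Leftrightarrow(iv)$ of Theorem~\ref{thm_hawkes_brouillon:commeLassalle} for the Jacod $\dot z$) supplies $\log\Lambda_{z}\circ\Ymap = -\log\Lambda_{y}$, which turns every inequality in the chain into an equality.

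The main obstacle I anticipate is twofold. First, the limit passage $t\to\infty$ needed to upgrade the finite-time quasi-invariance into the global conditional identification of $\espi{\pi}{\Lambda_{y}\mid \mathcal{Y}_{\infty}}$ relies both on uniform integrability of $(\Lambda_{y^{*}}^{*}(t))_t$ and on being careful that $\mathcal{Y}_{\infty}$ is in general strictly smaller than $\mathcal{N}_{\infty}$, so conditioning cannot be eliminated by measurability of $\Lambda_{y}$ alone. Second, extracting the pointwise relation $\dot z^{*}(Y,y^{*}(N,s))\dot y^{*}(N,s)=1$ from the almost-sure identity of two stochastic exponentials requires matching both the continuous and the jump components of the exponents and invoking uniqueness of the semimartingale decomposition of the submartingale $-\log\Lambda_{y}$.
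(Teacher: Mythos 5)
Your proof follows the same architecture as the paper's: introduce the Jacod density $\Lambda_{z}$ of $\Ymap^{\#}\pi$, feed it back through the quasi-invariance formula, apply conditional Jensen, identify $\espi{\pi}{-\log\Lambda_{y}}$ with the $\m$-integral via Lemma~\ref{lem_hawkes_core:2}, and in the equality case match the $\dif Y$ and $\dif s$ parts of the exponents to invoke Lemma~\ref{lem:ConditionInversibilite}. The one genuine variation is the passage from finite $t$ to $t=\infty$: you assert the $\pi$-a.s.\ \emph{equality} $\espi{\pi}{\Lambda_{y}\mid\mathcal{Y}_{\infty}}=(\Lambda_{z}\circ\Ymap)^{-1}$, by upgrading the finite-time quasi-invariance identity to $\espi{\pi}{g(Y)\Lambda_{y}}=\espi{\pi}{g}$ through $L^{1}$-convergence of $\Lambda_{y^{*}}^{*}(t)$. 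This is correct (it is the standard criterion that a non-negative local martingale with $\espi{\pi}{\Lambda_{y}}=1=\espi{\pi}{\Lambda_{y^{*}}^{*}(0)}$ is a closed, UI martingale, rather than Lemma~\ref{thm_hawkes_core:Equivalence_criterion} per se that delivers this); one must also check $\pi(\Lambda_{z}=0)=0$, which follows by testing $g=\car_{\{\Lambda_{z}=0\}}$ in your identity. The paper, by contrast, applies Fatou to the three-factor product and settles for the one-sided bound $\Lambda_{z}\circ\Ymap\cdot\espi{\pi}{\Lambda_{y^{*}}^{*}\mid\mathcal{Y}_{\infty}}\le 1$, then lets the entropy equality \eqref{eq_hawkes_core:11} force both that inequality and Jensen into equalities simultaneously. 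Both roads reach $\log\Lambda_{z}\circ\Ymap=-\log\Lambda_{y}$; yours gives \eqref{eq_hawkes_core:pre_entropy} a touch more directly, the paper's avoids having to justify the $L^{1}$ limit of the product.

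The converse direction is where the proposal is thin. ``Running the same computation in reverse'' or appealing to the equivalence (iii)$\Leftrightarrow$(iv) of Theorem~\ref{thm_hawkes_brouillon:commeLassalle} presupposes that the abstract left inverse $\Zfrak$ is itself a time-change map $N\mapsto N\circ z^{*}$ for some $\dot z\in\pred^{++}(\mathcal{N})$, and that $\pi_{z}\sim\pi$ — neither of which is given. The paper fills exactly this gap: it sets $z(N,t):=y^{*}(\Zfrak(N),t)$, transfers the identity $\Zfrak\circ\Ymap=\Id_{\NN}$ across $\Ymap^{\#}\pi\ll\pi$ to obtain $z(Y,\cdot)=y^{*}(N,\cdot)$ $\pi$-a.s., differentiates to get $\dot z^{*}(Y,y^{*}(N,t))\,\dot y^{*}(N,t)=1$, and then verifies directly from quasi-invariance that $\Lambda_{z^{*}}^{*}$ is the Radon--Nikodym density $R$ of $\Ymap^{\#}\pi$ before computing the entropy. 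You should include this explicit construction of $z$ from $\Zfrak$ rather than citing a theorem whose hypotheses you have not yet secured.
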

\begin{remark}
  Following \cite{Atar2012}, the function $x\mapsto \m(x-1)$ plays in the
  Poisson settings the exact same role as the function $x\mapsto x^{2}/2$ in the
  Gaussian world. Since the entropic criterion in the Wiener reads as
  \begin{equation*}
    H( U^{\#}\mu\/\mu)= \frac{1}{2}\espi{\mu}{\int_{0}^{1}\dot u(s)^{2}\dif s}
  \end{equation*}
  where $U$ is defined in \eqref{eq_hawkes_brouillon:8}, we see that the
  Gaussian-Poisson analogy alluded to in \cite{Atar2012} goes even further.
\end{remark}

\begin{proof}
  Since $\Ymap^{\#}\pi\ll \pi$,
  Theorem~\ref{thm_hawkes_brouillon:absolueContinuite} implies that there exists
  $\dot z\in \pred^{+}_{2}({\cal N},\ \Ymap^{\#}\pi)$ such that
  \begin{equation*}
    \left.\frac{\D \Ymap^{\#}\pi}{\D\pi}\right|_{{\cal N}_{\infty}}=\Lambda_{z}.
  \end{equation*}
  The quasi-invariance Theorem~\ref{thm_hawkes_brouillon:Pregirsanov} then says
  that for each $t>0$, for $f\, :\, \NN\to \R^{{+}}$ bounded and continuous,
  \begin{align*}
    \espi{\pi}{f\circ \Ymap^{t}} & =    \espi{\pi}{f(Y^{t})}                                                               \\
                                 & =  \espi{\pi}{f \Lambda_{z}(t)}                                                         \\
                                 & =\espi{\pi}{f\circ \Ymap^{t} \ \Lambda_{z}\circ \Ymap^{t}(t)\  \Lambda_{y^{*}}^{*}(t)}.
  \end{align*}
  By Fatou's Lemma, we obtain
  \begin{equation*}
    \espi{\pi}{f\circ \Ymap \  \Lambda_{z}\circ \Ymap\  \Lambda_{y^{*}}^{*}}\le     \espi{\pi}{f\circ \Ymap}
  \end{equation*}
  or equivalently
  \begin{equation*}
    \espi{\pi}{f\circ \Ymap \  \Lambda_{z}\circ \Ymap\ \esp{ \Lambda_{y^{*}}^{*}\/\mathcal{Y}_{\infty}}}\le     \espi{\pi}{f\circ \Ymap}
  \end{equation*}
  Hence, $\pi$-a.s. we have
  \begin{equation}
    \label{eq_hawkes_core:33}\Lambda_{z}\circ \Ymap\times  \espi{\pi}{\Lambda_{y^{*}}^{*}\left|  {\cal Y}_{\infty}\right.}\le 1.
  \end{equation}
  It follows that
  \begin{align}
    0\le        H(\Ymap^{\#}\pi \/ \pi) & =\espi{\pi}{\log \Lambda_{z}\circ \Ymap}\notag                                                                  \\
                                        & \le -\espi{\pi}{\log \espi{\pi}{\Lambda_{y^{*}}^{*}\left|  {\cal Y}_{\infty}\right.}}.\label{eq_hawkes_core:19}
  \end{align}

  Since $-\log$ is convex, the Jensen inequality stands that

	\begin{align}
	  H(\Ymap^{\#}\pi \/ \pi) & \le -\espi{\pi}{\log \Lambda_{y^{*}}^{*}}\label{eq:Jensen1}               \\
                            & =\espi{\pi}{\int_{0}^{\infty}\m\left(\dot y^{*}(s)-1\right)\dif s}\notag,
	\end{align}
	according to Lemma~\ref{lem_hawkes_core:2}. Then the first part holds.

	Assume now that \eqref{eq_hawkes_core:11} holds. Then
  \eqref{eq_hawkes_core:19} and \eqref{eq:Jensen1} are indeed equalities. On the
  one hand, this means that we have equality in the Jensen inequality used to
  derive~\eqref{eq:Jensen1}. Since $-\log$ is strictly convex, it follows that
  \cite[Cor. 2.1]{Mussmann1988}:
	\begin{equation*}
	  \espi{\pi}{\Lambda_{y^{*}}^{*}\left|  {\cal Y}_{\infty}\right.}= \Lambda_{y^{*}}^{*},\ \pi-\text{a.s.}
	\end{equation*}

	On the other hand, this also implies that \eqref{eq_hawkes_core:19} is an
  equality and as a consequence of \eqref{eq_hawkes_core:33}, we obtain
	\begin{equation}\label{eq_hawkes_core:12}
	  \log \Lambda_{z^{*}}^{*}\circ \Ymap=-\log \Lambda_{y^{*}}^{*}.
	\end{equation}
	According to \eqref{eq_hawkes_brouillon:6b} and \eqref{eq:chgttempsIntStob},
	\begin{equation*}
	  \log     \Lambda^{*}_{z^{*}}\circ \Ymap\\= -\int_{0}^{\infty }\log \Bigl(   \dot
	  z^{*}\bigl(Y,\,y^{*}(N,s) \bigr) \Bigr)\dif Y(s)+\int_{0}^{\infty} \Bigl( \dot
	  z^{*}\bigl(Y,\, y^{*}(N,s)\bigr)-1 \Bigr)\,\dot y^{*}(N,s)\dif s
	\end{equation*}
	and
	\begin{equation*}
	  -\log \Lambda_{y^{*}}^{*}=\int_{0}^{\infty} \log \left(\dot y^{*}(N,\,s)  \right)\dif Y(s)-\int_{0}^{\infty} \left( \dot y^{*}(N,\,s)-1 \right)\dif s.
	\end{equation*}
	Hence, according to \eqref{eq_hawkes_core:11}, we can take the conditional
  expectation with respect to $\mathcal{Y}_{t}$ in both equalities to obtain:
	\begin{multline*}
	  -\int_{0}^{t}\log \Bigl(   \dot
	  z^{*}\bigl(Y,\,y^{*}(N,s) \bigr) \Bigr)\dif Y(s)+\int_{0}^{t} \Bigl( \dot
	  z^{*}\bigl(Y,\, y^{*}(N,s)\bigr)-1 \Bigr)\,\dot y^{*}(N,s)\dif s\\
	  =    \int_{0}^{t} \log \left(\dot y^{*}(N,\,s)  \right)\dif Y(s)-\int_{0}^{t} \left( \dot y^{*}(N,\,s)-1 \right)\dif s.
	\end{multline*}
	Equating the jumps yields
	\begin{equation*}
	  \dot  z^{*}\bigl(Y,\, y^{*}(N,s)\bigr)\times\dot y^{*}(N,\,s)=1,
	\end{equation*}
	then, by integration, we get
	\begin{equation*}
	  z^{*}\left(Y,y^{*}(N,t)\right)=t.
	\end{equation*}
	Furthermore,
	\begin{equation*}
	  \left( z^{*}(Y,t)\le s \right)=\left(y(N,t)\le s  \right)=\left( y^{*}(N,s)\ge t \right)\in {\cal N}^{y^{*}}_{s},
	\end{equation*}
	hence $z^{*}(Y,t)$ is an ${\cal N}^{y^{*}}$ stopping time. According to
  \eqref{eq_hawkes_brouillon:1}, this means that $\Zmap\circ \Ymap=\Id_{\NN}$
  and then $\Ymap$ is left invertible.

	Conversely, if $\Ymap$ is left invertible. According to the
  Definition~\ref{def_hawkes_core:def_invertibility}, $\Ymap^{\#}\pi$ is
  absolutely continuous with respect to~$\pi$.
	Let us denote by $\Zfrak $ the map such that
	\begin{equation*}
	  \Zfrak \circ \Ymap=\Id_{\NN},\ \pi-\text{a.s.}
	\end{equation*}
	Define the process $z$ by
	\begin{equation*}
	  z\left( N,t \right)=y^{*}\bigl(\Zfrak(N),\, t\bigr), \forall t\ge 0.
	\end{equation*}
	Since $\Ymap^{\#}\pi\ll \pi$,
	\begin{equation*}
	  \Ymap^{\#}\pi\left(  z\left( N,. \right)=y^{*}\left(\Zfrak(N),\, .\right) \right)=1
	\end{equation*}
	and
	\begin{align*}
	  \pi\left( z(Y,.)=y^{*}(N,.) \right)
	  & = \Ymap^{\#}\pi\left( z(N,.)=y^{*}(\Zfrak(N),.) \right).
	\end{align*}
	This means that
	\begin{equation*}
	  z(Y,t)=y^{*}(N,t),\  \forall t\ge 0, \, \pi\text{-a.s.}
	\end{equation*}
	or otherwise stated, $\Zmap\circ \Ymap=\Id_{\NN}$, $\pi$-a.s. Moreover, by
  differentiation, we get
	\begin{equation}
	  \label{eq_hawkes_core:13}
	  \dot z^{*}(Y,\, y^{*}(N,t))=\frac{1}{\dot y^{*}(N,t)},\ \forall t\ge 0, \, \pi-\text{a.s.}
	\end{equation}
	Since $\dot y(N,.)$ belongs to $\pred^{++}_{2}({\cal N},\pi)$, so does
  $\dot z^{*}(Y,.)$, hence we can write
	\begin{equation*}
	  \log \Lambda_{z^{*}}^{*}\circ \Ymap=\\ -\int_{0}^{\infty }\log \Bigl(   \dot
	  z^{*}\bigl(Y,\,y^{*}(N,s) \bigr) \Bigr)\dif Y(s)+\int_{0}^{\infty} \Bigl( \dot
	  z^{*}\bigl(Y,\, y^{*}(N,s)\bigr)-1 \Bigr)\,\dot y^{*}(N,s)\dif s,
	\end{equation*}
	and \eqref{eq_hawkes_core:13} entails that
	\begin{equation}
	  \label{eq_hawkes_core:14}
	  \log \Lambda_{z^{*}}^{*}\circ \Ymap= -\log \Lambda_{y^{*}}^{*}.
	\end{equation}
	Let
	\begin{equation*}
	  R=\frac{\D \Ymap^{\#}\pi}{\D\pi}\cdotp 
	\end{equation*}
	For any $f\, :\, \NN\to \R$ continuous and bounded, for any $t>0$, we have
	\begin{align*}
	  \espi{\pi}{f R} & =\espi{\pi}{f\circ \Ymap}                                             \\
                    & =\espi{\pi}{(f\Lambda_{z^{*}}^{*} )\circ \Ymap \ \Lambda_{y^{*}}^{*}} \\
                    & =\espi{\pi}{f\Lambda_{z^{*}}^{*} }
	\end{align*}
	according to \eqref{eq_hawkes_core:14} and to the quasi-invariance Theorem. It
  follows that
	\begin{math}
	  R= \Lambda_{z^{*}}^{*},\ \pi-\text{a.s.}
	\end{math}
	Plug this identity into \eqref{eq_hawkes_core:14} to obtain
	\begin{align*}
	  H(\Ymap^{*}\pi\/\pi) & =\espi{\pi}{\log R\circ \Ymap}                                      \\
                         & =\espi{\pi}{\log \Lambda_{z^{*}}^{*}\circ \Ymap}                    \\
                         & =\espi{\pi}{-\log \Lambda_{y^{*}}^{*}}                              \\
                         & =\espi{\pi}{\int_{0}^{\infty}\m\left(\dot y^{*}(s)-1\right)\dif s}.
	\end{align*}
	The entropic criterion is thus satisfied.
\end{proof}
\section{Variational representation of the entropy}
\label{sec:vari-repr-entr}
We now give an interesting application of the previous considerations where the
entropic criterion is the key to the approximation procedure needed in the proof
of this representation of the entropy.

Let
\begin{align*}
	\pred^{++}_{\m}                          & =\left\{y,\, \dot y \in \pred^{++}(\mathcal{N}) \text{ and } (\dot y -1)\in L^{1}(\NN; \L_{\m},\pi)\right\} \\
	\pred^{++}_{\infty,\pc}(\mathcal{N},\pi) & = \pred^{++}_{\infty}(\mathcal{N},\pi) \cap \{y, \dot y\text{ piecewise constant}\}                         \\
	\mathfrak M_{\m}(\NN)                    & =\left\{ \mu,\ \exists y\in \pred^{++}_{\m} \text{ such that } \mu=\Ymap^{\#}\pi \right\}.
\end{align*}
The first step of the proof consists in proving the existence of a g-Hawkes
process for a piece-wise constant time change (see
\cite{hartmann2016variational,Stroock2006} for the Brownian analog).
\begin{lemma}
	\label{thm_hawkes_brouillon:ChgtTempsPiecewiseInversible} Let
  $\dot y\in \pred^{++}_{\infty}(\mathcal{N},\pi)$ be piecewise constant: If we
  denote by $T$ the time after which $\dot y(N,s)=1$, consider a partition of
  $[0,T]$, $0=t_{0}<t_{1}<\ldots<t_{k}=T<t_{k+1}=+\infty$ and assume that there
  exist $\alpha_{0}\in \R^{+}$ and some random variables
  $(\alpha_{j}, j=1,\cdots,k)$ such that for some $\epsilon>0$,
	\begin{equation*}
	  \epsilon \le \alpha_{j}(N) \le 1/\epsilon,\  \forall j=0,\cdots,k
	\end{equation*}
	and
	\begin{equation*}
	  \dot y(N,s)=\alpha_{0}\car_{(0,t_{1}]}(t)+\sum_{j=1}^{k-1}\alpha_{j}(N^{t_{j}})\car_{(t_{j},t_{j+1}]}(s)+\alpha_{k}(N^{T}) \car_{[T,\infty)}(s).
	\end{equation*}
	Then, $\Ymap$ is invertible.
\end{lemma}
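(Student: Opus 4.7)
\medskip\noindent\textbf{Plan.} The strategy is to construct a right inverse $\Zmap=N\circ z^{*}$ of $\Ymap$ by solving the g-Hawkes equation $Z(t)=N\bigl(y(Z,t)\bigr)$ inductively on the intervals $(t_{j},t_{j+1}]$, then to invoke Theorem~\ref{thm_hawkes_brouillon:HawkesEquivautInverseDroite} for right invertibility, and finally to promote this into full invertibility by means of Lemma~\ref{lem:preInversibilite}.

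For the inductive construction, on $(0,t_{1}]$ set $y(Z,t)=\alpha_{0}t$ and $Z(t)=N(\alpha_{0}t)$, since $\alpha_{0}$ is deterministic. Once $Z$ is defined on $[0,t_{j}]$, the random variable $\alpha_{j}(Z^{t_{j}})$ is a pathwise functional; extend
\begin{equation*}
  y(Z,t)=y(Z,t_{j})+\alpha_{j}(Z^{t_{j}})(t-t_{j}),\qquad t\in(t_{j},t_{j+1}],
\end{equation*}
and set $Z(t)=N\bigl(y(Z,t)\bigr)$. On $[T,\infty)$, where $\dot y=1$, the process $Z$ is simply a translated tail of $N$. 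The uniform bounds $\epsilon\le \alpha_{j}\le 1/\epsilon$ ensure that $y(Z,\cdot)$ is a homeomorphism of $\R^{+}$ and that $\Lambda_{y}$ is uniformly integrable and strictly positive.

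The crucial verification is the stopping-time property, which I would prove by induction on $j$. Assuming $\tau_{j}:=y(Z,t_{j})$ is an $\mathcal{N}$-stopping time, the restriction $Z^{t_{j}}$ is recovered from $N$ on $[0,\tau_{j}]$ and is therefore $\mathcal{N}_{\tau_{j}}$-measurable, so that $\alpha_{j}(Z^{t_{j}})\in \mathcal{N}_{\tau_{j}}$. The decomposition
\begin{equation*}
  \{y(Z,t)\le s\}=\{\tau_{j}\le s\}\cap\bigl\{\alpha_{j}(Z^{t_{j}})(t-t_{j})\le s-\tau_{j}\bigr\}
\end{equation*}
then places $y(Z,t)$ in the class of $\mathcal{N}$-stopping times for every $t\in (t_{j},t_{j+1}]$. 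The same argument shows that $y(Z,\cdot)$ is $\mathcal{Z}$-adapted and left-continuous, hence $\mathcal{Z}$-predictable, so that the change-of-time Theorem~\ref{thm_hawkes_brouillon:chgtdetemps} identifies $y(Z,\cdot)$ as the $\mathcal{Z}$-compensator of $Z$ under $\pi$. By Theorem~\ref{thm_hawkes_core:watanabe_charac}(2) the law of $Z$ is then $\pi_{y}$, which is equivalent to $\pi$ thanks to Lemma~\ref{thm_hawkes_core:Equivalence_criterion}.

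All hypotheses of Theorem~\ref{thm_hawkes_brouillon:HawkesEquivautInverseDroite} being in place, $\Ymap$ is right invertible with $\Ymap\circ \Zmap=\Id_{\NN}$, $\pi$-a.s. Since $\Zmap^{\#}\pi=\pi_{y}\sim \pi$ and $\Ymap^{\#}\pi\sim \pi$ by Theorem~\ref{thm_hawkes_brouillon:AbsolueContinuiteChangementTemps}, Lemma~\ref{lem:preInversibilite} applied with $\Yfrak=\Zmap$ and $\Zfrak=\Ymap$ upgrades this to $\Zmap\circ \Ymap=\Id_{\NN}$, $\pi$-a.s., so that $\Ymap$ is also left invertible, and thus invertible. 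The step I expect to be the most delicate is the filtration bookkeeping at the junction times $t_{j}$: one must carefully justify that $Z^{t_{j}}$ lies in $\mathcal{N}_{\tau_{j}}$ and that $y(Z,\cdot)$ is genuinely $\mathcal{Z}$-predictable, so that the change-of-time argument legitimately applies; once these points are settled, the rest is a clean assembly of the results built in Sections~\ref{sec:changes-time-changes} and~\ref{sec:invertibility}.
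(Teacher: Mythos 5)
Your proof is correct and follows the paper's strategy: the same inductive construction of $Z$ on the partition intervals, the same verification that $y(Z,t)$ is an $\mathcal{N}$-stopping time via the decomposition of $\{y(Z,t)\le s\}$ using measurability in $\mathcal{N}_{\tau_{j}}$, and the same promotion from right to full invertibility via uniform integrability and strict positivity of $\Lambda_{y}$. The only cosmetic difference is in the last step, where you invoke Lemma~\ref{lem:preInversibilite} and Theorem~\ref{thm_hawkes_brouillon:AbsolueContinuiteChangementTemps} directly, whereas the paper bundles exactly this reasoning into an appeal to item~(i) of Theorem~\ref{thm_hawkes_brouillon:commeLassalle}.
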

\begin{proof}
	We first prove that $\Ymap$ is right invertible. The g-Hawkes process $Z$ is
  constructed inductively. On $[0,t_{1}]$, we set
	\begin{equation*}
	  Z(t)=N(\alpha_{0}t).
	\end{equation*}
	Then,
	\begin{equation*}
	  y(Z,t)=\alpha_{0}t
	\end{equation*}
	and we do have $Z(t)=N\bigl( y(Z,t) \bigr)$. For any $t\le t_{1}$, $y(N,t)$ is
  deterministic hence it is an $\mathcal{N}$-stopping time.

	Assume that $Z$ is constructed on $[0,t_{m}]$ with $m<k$ and $y(Z,t)$ is an
  $\mathcal{N}$-stopping time for $t\le t_{m}$. For $t\in[t_{m},t_{m+1}]$, we
  have
	\begin{equation}\label{eq_hawkes_brouillon:40}
	  y(Z,t)=y(Z,t_{m})+\alpha_{m}(Z^{t_{m}})(t-t_{m}).
	\end{equation}
	By the induction hypothesis, $y(Z,t_{m})$ is an $\mathcal{N}$-stopping time
  hence the $\sigma$-field $\mathcal{N}_{y(Z,t_{m})}$ is well defined and
  $y(Z,t_{m})$ belongs to this $\sigma$-field. Furthermore, for $t\le t_{m}$,
	\begin{equation*}
	  Z(t)=N\Bigl( y(Z,t) \Bigr)\in \mathcal{N}_{y(Z,t_{m})}.
	\end{equation*}

	It follows that for $t\in[t_{m},t_{m+1}]$,
	\begin{equation*}
	  y(Z,t)\in \mathcal{N}_{y(Z,t_{m})}.
	\end{equation*}
	Since $\alpha_{m}>0$, for any $s\ge 0$, according to
  \eqref{eq_hawkes_brouillon:40},
	\begin{align*}
	  \Bigl( y(Z,t)\le s \Bigr) & =\Bigl( \alpha_{m}(Z^{t_{m}})\le  \frac{s-y(t_{m})}{t-t_{m}} \Bigr)                       \\
                              & =\Bigl( \alpha_{m}(Z^{t_{m}})\le  \frac{s-y(t_{m})}{t-t_{m}} \Bigr)\cap (y(Z,t_{m})\le s)
	\end{align*}
	which belongs to $\mathcal{N}_{s}$ by the definition of
  $\mathcal{N}_{y(Z,t_{m})}$. Hence $y(Z,t)$ is an $\mathcal{N}$-stopping time
  for $t\le t_{m+1}$. Moreover, \eqref{eq_hawkes_brouillon:40} guarantees that
  there is no ambiguity to define $Z$ on $[t_{m},t_{m+1}]$ by $Z(t)=N(y(Z,t))$.
  According to Theorem~\ref{thm_hawkes_brouillon:HawkesEquivautInverseDroite},
  $\Ymap$ is right invertible.

	Remark that $Z$ has $(\mathcal{N},\pi)$ compensator $y(Z,.)$. In view of
  Theorem~\ref{thm_hawkes_brouillon:absolueContinuite},
	\begin{equation*}
	  \left. \frac{\D \pi_{z}}{\D \pi}\right|_{\mathcal{N}_{t}}=\Lambda_{y(Z,t)}.
	\end{equation*}
	From the form of $y$, it is clear that $\Lambda_{y(Z,t)}>0$ for all $t\ge 0$
  and that
	\begin{equation*}
	  \Bigl( \Lambda_{y(Z,t)},\, t\ge 0 \Bigr)
	\end{equation*}
	is uniformly integrable. Point \ref{item:1} of
  Theorem~\ref{thm_hawkes_brouillon:commeLassalle} then entails that $\Ymap$ is
  left invertible and then invertible.
\end{proof}
The theorem reads as follows:
\begin{theorem}[Variational representation of the entropy]
  \label{thm_hawkes_core:variational_representation}
	Let $f\, :\, \NN\to \R$ such that
	\begin{equation*}
	  \espi{\pi}{|f|(1+e^{f})}<\infty.
	\end{equation*}
	Then,
	\begin{equation*}
	  \label{eq_hawkes_brouillon:39}
	  \log \espi{\pi}{e^{f}}=\sup_{y\in \pred^{++}_{\m}}\left( \espi{\pi}{f(N\circ y^{*})}-\espi{\pi}{\int_{0}^{\infty}\m(\dot y^{*}(s)-1)\dif s} \right).
	\end{equation*}
\end{theorem}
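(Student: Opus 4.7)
The statement is the Poisson counterpart of the Bou\'e--Dupuis variational formula for relative entropy, so my plan is to establish it via two matching inequalities. The ``$\ge$'' direction (supremum bounded by the log-Laplace) follows from the classical Legendre duality for entropy combined with the entropic inequality of Theorem~\ref{thm_hawkes_core:left_invertible_entropy}; the ``$\le$'' direction is obtained by approximating the Legendre maximizer $\nu^{*}:=(e^{f}/\espi{\pi}{e^{f}})\cdot\pi$ by images $\Ymap_{n}^{\#}\pi$ coming from piecewise-constant time changes, where Lemma~\ref{thm_hawkes_brouillon:ChgtTempsPiecewiseInversible} delivers invertibility and hence equality in the entropic criterion.

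For the upper bound, I would fix $y\in\pred^{++}_{\m}$ and set $\nu:=\Ymap^{\#}\pi$. The $\L_{\m}$-integrability of $\dot y-1$ is strong enough for Lemma~\ref{lem_hawkes_core:Conditionpositivity} together with a uniform-integrability argument to provide $\Lambda_{y}\in(0,\infty)$ with $\espi{\pi}{\Lambda_{y}}=1$, so $\nu\ll \pi$ by Theorem~\ref{thm_hawkes_brouillon:AbsolueContinuiteChangementTemps}. Inequality~\eqref{eq_hawkes_core:pre_entropy} of Theorem~\ref{thm_hawkes_core:left_invertible_entropy} then reads $H(\nu\,|\,\pi)\le \espi{\pi}{\int_{0}^{\infty}\m(\dot y^{*}(s)-1)\D s}$. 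Combining this with the classical Legendre inequality $\espi{\nu}{f}-H(\nu\,|\,\pi)\le\log\espi{\pi}{e^{f}}$ (which the hypothesis $\espi{\pi}{|f|(1+e^{f})}<\infty$ validates by ensuring $f\in L^{1}(\nu)\cap L^{1}(\pi)$) and using $\espi{\nu}{f}=\espi{\pi}{f(N\circ y^{*})}$ yields the upper bound after passing to the supremum.

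For the matching lower bound, I take the Legendre maximizer $\nu^{*}$, which is equivalent to $\pi$ since $e^{f}>0$, and represent it via Theorem~\ref{thm_hawkes_brouillon:absolueContinuite} as $\nu^{*}=\pi_{\zeta^{*}}$ for a positive predictable Girsanov factor $\dot\zeta^{*}$. I approximate $\dot\zeta^{*}$ by a sequence $\dot\zeta_{n}\in\pred^{++}_{\infty,\pc}(\mathcal N,\pi)$ obtained by truncating between $\varepsilon_{n}$ and $1/\varepsilon_{n}$, resetting to $1$ past a horizon $T_{n}$, and replacing the result by a piecewise-constant predictable approximation on a refining dyadic partition of $[0,T_{n}]$. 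Lemma~\ref{thm_hawkes_brouillon:ChgtTempsPiecewiseInversible} applied to $\zeta_{n}$ (in the role of $y$) builds a g-Hawkes process $Z_{n}$ of law $\pi_{\zeta_{n}}$ whose associated map is invertible; the corresponding push-forward is $\pi_{\zeta_{n}}$ by Corollary~\ref{lem_hawkes_core:egalité_tribu_hawkes}. Writing $\Ymap_{n}$ for this map and $\dot y_{n}$ for the associated (bounded, piecewise constant) time-change rate, left invertibility plus the equality case of Theorem~\ref{thm_hawkes_core:left_invertible_entropy} gives
\begin{equation*}
  \espi{\pi}{f(N\circ y_{n}^{*})}-\espi{\pi}{\int_{0}^{\infty}\m(\dot y_{n}^{*}(s)-1)\D s} = \espi{\pi_{\zeta_{n}}}{f}-H(\pi_{\zeta_{n}}\,|\,\pi).
\end{equation*}

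The main obstacle is the simultaneous passage to the limit $\espi{\pi_{\zeta_{n}}}{f}\to\espi{\nu^{*}}{f}$ and $H(\pi_{\zeta_{n}}\,|\,\pi)\to H(\nu^{*}\,|\,\pi)$. The first convergence should follow from uniform integrability of $f\,\Lambda_{\zeta_{n}}$ under $\pi$, which the growth assumption $\espi{\pi}{|f|(1+e^{f})}<\infty$ delivers via a de~la~Vall\'ee~Poussin-type argument once the densities $\Lambda_{\zeta_{n}}$ converge to $e^{f}/\espi{\pi}{e^{f}}$ in $L^{1}(\pi)$. The second is more delicate since $H(\cdot\,|\,\pi)$ is only lower semicontinuous in general; my plan is to choose the approximation so that $\m(\dot\zeta_{n}-1)\to \m(\dot\zeta^{*}-1)$ with an integrable dominant on $\NN\times\R^{+}$ and invoke the identity $H(\pi_{\zeta}\,|\,\pi)=\espi{\pi_{\zeta}}{\int_{0}^{\infty}\m(\dot\zeta(s)-1)\D s}$ together with dominated convergence. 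In the limit, the right-hand side of the displayed equality converges to $\espi{\nu^{*}}{f}-H(\nu^{*}\,|\,\pi)=\log\espi{\pi}{e^{f}}$, closing the desired inequality.
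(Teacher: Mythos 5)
Your proposal follows essentially the same strategy as the paper: Legendre--Fenchel duality for relative entropy, the entropic inequality~\eqref{eq_hawkes_core:pre_entropy} for the upper bound, and approximation of the Legendre maximizer by invertible piecewise-constant time changes (Lemma~\ref{thm_hawkes_brouillon:ChgtTempsPiecewiseInversible}) combined with the equality case of Theorem~\ref{thm_hawkes_core:left_invertible_entropy} for the lower bound. The paper organizes the final approximation into a separate theorem with a careful multi-stage truncation of the density $L=e^f/\espi{\pi}{e^f}$ (truncate $L$ between $1/n$ and $n$, condition to $\mathcal{N}_n$, truncate $\dot y$ from below, then piecewise-constant approximation on dyadic partitions), proving $L_n\log L_n\to L\log L$ and $L_n\log L\to L\log L$ in $L^1(\pi)$ with explicit dominations rather than the single-shot dominated-convergence plan you sketch. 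One point to be careful about, which both arguments finesse: the approximation is built on the Girsanov factor $\dot\zeta$ with $\Lambda_\zeta=L$, but the variational formula is written in terms of the time change $y$ such that $\Ymap^\#\pi$ is the target measure, and $\dot y$ is related to $\dot\zeta$ through the inverse correspondence $\dot z^*(Y,y^*(N,t))\,\dot y^*(N,t)=1$ of Theorem~\ref{thm_hawkes_brouillon:commeLassalle}; in particular, when you relabel the g-Hawkes map as $\Ymap_n$, its time-change derivative $\dot z_n^*$ (what appears in the entropy penalty) is piecewise constant along $Z_n$'s sample path, so it does land in $\pred^{++}_{\infty,\pc}$, but this deserves an explicit word. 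Overall this is the paper's proof, reassembled.
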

\begin{proof}
	The duality between the relative entropy and the logarithmic Laplace transform
  says that
	\begin{equation}\label{eq_hawkes_brouillon:41}
	  \log \espi{\pi}{e^{f}}=\sup_{\mu\in \MM(\NN)}\left( \int_{\NN}f\dif \mu  -H(\mu \,|\, \pi)\right)
	\end{equation}
	where $\MM(\NN)$ is the set of probability measures on $\NN$ which are
  absolutely continuous with respect to~$\pi$ on~$\mathcal{N}_{\infty}$.
  Furthermore, the supremum is attained at the measure $\mu_{f}$ whose
  $\pi$-density is given by
	\begin{equation*}
	  \frac{\D \mu_{f}}{\D \pi}=\frac{e^{f}}{\espi{\pi}{e^{f}}}\cdotp
	\end{equation*}
	In view of \eqref{eq_hawkes_brouillon:41}, we evidently have
	\begin{equation*}
	  \log \espi{\pi}{e^{f}}\ge \sup_{\mu\in \mathfrak M _{\m}(\NN)}\left( \int_{\NN}f\dif \mu  -H(\mu \,|\, \pi)\right).
	\end{equation*}
	According to Lemma~\ref{thm_hawkes_brouillon:ChgtTempsPiecewiseInversible},
  for $y\in \pred^{++}_{\m} $
	\begin{equation*}
	  \int_{\NN}f\dif \mu  -H(\Ymap^{\#}\pi \,|\, \pi)\ge \espi{\pi}{f(N\circ y^{*})}-\espi{\pi}{\int_{0}^{\infty} \m(\dot y^{*}(s)-1)\dif s}.
	\end{equation*}
	Since $ \pred^{++}_{\infty,\pc}(\mathcal{N},\pi) \subset \pred^{++}_{\m}$, the
  entropic criterion implies that
	\begin{align*}
	  \sup_{\mu\in \MM(\NN)}  \int_{\NN}f\dif \mu  -H(\mu \,|\, \pi) & \ge \sup_{y\in \pred^{++}_{\m}}\espi{\pi}{f(N\circ y^{*})}-\espi{\pi}{\int_{0}^{\infty} \m(\dot y^{*}(s)-1)\dif s} \\
                                                                   & \ge     \sup_{y\in \pred^{++}_{\infty,\pc}(\mathcal{N},\pi) }\espi{\pi}{f(N\circ y^{*})}-H(\Ymap^{\#}\pi\,|\,\pi).
	\end{align*}

	It remains to prove that we can find $(\dot y_{n},\, n\ge 1)$, a sequence of
  elements of $\pred^{++}_{\infty,\pc}(\mathcal{N},\pi)$ such that
	\begin{align*}
	  \int_{\NN}f\dif \left( \Ymap_{n}^{\#}\pi \right) & \xrightarrow{n\to \infty}    \int_{\NN}f\dif \mu_{f} \\
	  H(\Ymap_{n}^{\#}\pi\,|\, \pi)                    & \xrightarrow{n\to \infty} H(\mu\, |
                                                       \, \pi)
	\end{align*}
	to conclude. This is the object of the next theorem.
\end{proof}

\begin{theorem}
  Let $\nu$ be a probability measure on $\NN$ absolutely continuous with respect
  to~$\pi$ and
  \begin{equation*}
    L=\frac{\D \nu}{\D \pi}\cdotp
  \end{equation*}
  Assume that $L\log L \in L^{1}(\pi)\text{ and } \log L \in L^{r}(\pi) \text{
    for some } r>1$.

  Then, there exists $(\dot y_{n},\ n\ge 1)$ a sequence of elements of
  $\pred^{++}_{\infty,\pc}(\mathcal{N},\pi)$ such that
  \begin{equation}\label{eq_hawkes_brouillon:42}
    L_{n}\log L_{n}\xrightarrow[n\to \infty]{L^{1}(\pi)} L\log L
  \end{equation}
  and
  \begin{equation}\label{eq_hawkes_brouillon:43}
    L_{n} \log L\xrightarrow[n\to \infty]{L^{1}(\pi)} L\log L
  \end{equation}
  where
  \begin{equation*}
    L_{n}=\frac{\D\, \Ymap_{n}^{\#} \pi}{\D \pi}\cdotp
  \end{equation*}
\end{theorem}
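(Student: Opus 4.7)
My plan is: represent $L$ via its Girsanov factor, approximate this factor by bounded piecewise constant processes for which the associated time-change maps are invertible by Lemma~\ref{thm_hawkes_brouillon:ChgtTempsPiecewiseInversible}, and then upgrade the resulting pointwise convergence of $L_n$ to the two required $L^{1}(\pi)$ convergences via uniform integrability.

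Since $\nu\ll\pi$ on $\mathcal{N}_\infty$, Theorem~\ref{thm_hawkes_brouillon:absolueContinuite} gives a unique non-negative $\mathcal{N}$-predictable $\dot z$ with $\nu=\pi_{z}$ and $L=\Lambda_{z}$. By Lemma~\ref{lem-egal-lambda-lambda*}, $L\log L\in L^{1}(\pi)$ is equivalent to $H(\nu\,|\,\pi)=\espi{\pi}{\int_{0}^{\infty}(\dot z\log\dot z-\dot z+1)\dif s}<\infty$, while $\log L\in L^{r}(\pi)$ gives quantitative control on $\log L=\int_{0}^{\infty}\log\dot z\,\dif N+\int_{0}^{\infty}(1-\dot z)\dif s$.

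Next, I construct $\dot y_{n}\in\pred^{++}_{\infty,\pc}(\mathcal{N},\pi)$ such that the Girsanov factor $\dot z_{n}$ of $\Ymap_{n}^{\#}\pi$ (given by Theorem~\ref{thm_hawkes_brouillon:commeLassalle}\,\ref{item:4} once invertibility of $\Ymap_{n}$ is established) approximates $\dot z$. To this end, I perform three nested truncations of $\dot z$: clip values to $[1/n,n]$, force the process to equal $1$ past a horizon $T_{n}\uparrow\infty$, and discretize on a dyadic partition $0=t_{0}^{n}<\dots<t_{k_{n}}^{n}=T_{n}$ of mesh $\delta_{n}\downarrow 0$ by setting the value on each cell $(t_{j}^{n},t_{j+1}^{n}]$ to an $\mathcal{N}_{t_{j}^{n}}$-measurable random variable obtained from $\dot z$. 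The corresponding $\dot y_{n}$ is built inductively as in the proof of Lemma~\ref{thm_hawkes_brouillon:ChgtTempsPiecewiseInversible}, yielding $\dot y_{n}\in\pred^{++}_{\infty,\pc}(\mathcal{N},\pi)$ and the invertibility of $\Ymap_{n}$. The explicit representation $L_{n}=\Lambda_{z_{n}}=\exp\!\bigl(\int_{0}^{\infty}\log\dot z_{n}\,\dif N+\int_{0}^{\infty}(1-\dot z_{n})\dif s\bigr)$, combined with $\dot z_{n}\to\dot z$ $\pi\otimes\dif s$-almost everywhere under the chosen parameters, gives $L_{n}\to L$ and $\log L_{n}\to\log L$ in $\pi$-probability.

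The main obstacle is upgrading these convergences to $L^{1}(\pi)$. For $L_{n}\log L_{n}\to L\log L$, I would apply the de la Vall\'ee Poussin criterion with $\Phi(x)=x\log x$: it suffices to show the uniform bound $\sup_{n}\espi{\pi}{L_{n}\log L_{n}}=\sup_{n}H(\pi_{z_{n}}|\pi)<\infty$, which follows from dominated convergence of $\espi{\pi}{\int_{0}^{\infty}(\dot z_{n}\log\dot z_{n}-\dot z_{n}+1)\dif s}\to H(\nu|\pi)$ embedded into the truncation scheme. For $L_{n}\log L\to L\log L$, H\"older's inequality with exponent $r$ and its conjugate $r'=r/(r-1)$ reduces matters to a uniform $L^{r'}(\pi)$ bound on $L_{n}$, which I would obtain via Poisson exponential-moment estimates applied to $\int_{0}^{\infty}\log\dot z_{n}\,\dif N$ under the caps $1/n\le\dot z_{n}\le n$ on $[0,T_{n}]$ (with $\dot z_{n}\equiv 1$ past $T_{n}$). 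The technical crux is to tune $(n,T_{n},\delta_{n})$ so that both uniform integrability conditions hold simultaneously while keeping $\dot z_{n}\to\dot z$ strong enough for the integral identities to pass to the limit.
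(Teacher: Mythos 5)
Your overall strategy---express $L$ through a Girsanov factor, approximate it by processes in $\pred^{++}_{\infty,\pc}(\mathcal{N},\pi)$ so that the associated time-change maps are invertible via Lemma~\ref{thm_hawkes_brouillon:ChgtTempsPiecewiseInversible}, then upgrade convergence in probability to $L^{1}(\pi)$---is the same as the paper's. But the architecture and, crucially, the uniform-integrability estimates differ, and the latter is where your proposal has genuine gaps.

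First, a structural difference: the paper does not attempt a single diagonal construction truncating the Girsanov factor $\dot z$ in three ways at once. It runs three successive reductions, each time replacing $L$ by a simpler $L_n$ and checking \eqref{eq_hawkes_brouillon:42}--\eqref{eq_hawkes_brouillon:43} for that reduction alone. In the first step it truncates $L$ \emph{itself} (not $\dot z$) by $\Phi_n=(L\wedge n)\vee \tfrac1n$ and normalizes; dominated convergence is immediate. In the second step it takes $L_n=\espi{\pi}{L\,|\,\mathcal{N}_n}$, so martingale convergence is free. Only in the third step, once $L$ is bounded above and below, does it observe---via the Clark--Ocone type formula $\dot y(N,s)=\espi{\pi}{D_sL\,|\,\mathcal{N}_s}/\espi{\pi}{L\,|\,\mathcal{N}_s}$---that $\dot y$ is then automatically bounded by $M/m$, so the piecewise-constant discretization is harmless. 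Truncating $\dot z$ directly as you propose is more delicate: controlling $\Lambda_{\hat z_n}\to\Lambda_z$ pointwise is not automatic when $\dot z$ is far from $1$ on a non-negligible set, and you have to supply the dominated-convergence argument that the paper avoids by clipping $L$ rather than $\dot z$.

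The more serious problems are in the two $L^1$ upgrades. For \eqref{eq_hawkes_brouillon:42}, de la Vall\'ee Poussin with $\Phi(x)=x\log x$ and the uniform bound $\sup_n\espi{\pi}{L_n\log L_n}<\infty$ gives uniform integrability of $(L_n)$, \emph{not} of $(L_n\log L_n)$. What you actually need to invoke is Scheff\'e: $L_n\log L_n+e^{-1}\ge 0$, $L_n\log L_n\to L\log L$ a.e.\ and $\espi{\pi}{L_n\log L_n}\to\espi{\pi}{L\log L}$ together give $L^1$ convergence. You do assert the convergence of entropies in the same breath, but the phrasing ``it suffices to show the uniform bound'' is a wrong reduction. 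For \eqref{eq_hawkes_brouillon:43}, H\"older with $r,r'$ requires $\|L_n-L\|_{L^{r'}(\pi)}\to 0$, not merely a uniform $L^{r'}(\pi)$ bound. A uniform bound in $L^{r'}$ together with a.e.\ convergence only yields convergence in $L^p$ for $p<r'$, and then you would need $\log L\in L^q(\pi)$ for some $q>r$, which is not part of the hypothesis. The paper closes both gaps at once by establishing $\sup_n\espi{\pi}{\Lambda_{y_n}^p}<\infty$ for \emph{every} $p\ge 1$ (a consequence of the uniform bound $\varepsilon\le\dot y_n\le\varepsilon^{-1}$ on a compact time interval): this gives uniform integrability of $(L_n\log L_n)$ via a $p>1$ moment, and $L^{r'}$ convergence via a $p>r'$ moment. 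To repair your proof you should either adopt this all-$p$ moment estimate directly, or explicitly cite Scheff\'e for \eqref{eq_hawkes_brouillon:42} and strengthen your exponential-moment bound for $\int_0^{\infty}\log\dot z_n\,\dif N$ to a uniform $L^p$ bound for some $p>r'$ for \eqref{eq_hawkes_brouillon:43}.
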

\begin{proof}
  We first show that we can suppose $L$ lower and upper bounded. Consider
  \begin{equation*}
    \Phi_{n}=\left( L\wedge n \right)\vee  \frac1n\cdotp
  \end{equation*}
  We have
  \begin{equation*}
    |\Phi_{n}|\le L+1,
  \end{equation*}
  hence by dominated convergence, $\Phi_{n}$ converges in $L^{1}(\pi)$ to $L$
  and in particular,
  \begin{equation*}
    \espi{\pi}{\Phi_{n}}\xrightarrow{n\to \infty} \espi{\pi}{L}=1.
  \end{equation*}
  Let
  \begin{equation*}
    L_{n}=\frac{\Phi_{n}}{\espi{\pi}{\Phi_{n}}}\cdotp
  \end{equation*}
  For any $\alpha\in (0,1)$, for $n$ sufficiently large,
  $\espi{\pi}{\Phi_{n}}\geq \alpha$. Moreover, for $x\ge 0$, we have
  \begin{equation*}
    |x\log(x)|\le \frac1{e}+\left| \frac{{x}}\alpha\log(\frac{{x}}\alpha) \right|\car_{x\ge \alpha}.
  \end{equation*}
  Hence,
  \begin{equation*}
    |L_{n}\log L_{n}|\le \frac1{e}+\left| \frac{L}{\alpha}\log \left( \frac{L}{\alpha} \right)  \right|.
  \end{equation*}
  By dominated convergence again, $L_{n}\log L_{n}$ converges to $L\log L$ in
  $L^{1}(\pi)$. Similarly,
  \begin{equation*}
    \left| L_{n}\log L \right|\leq \left| \frac{L}{\alpha}\log L \right|
  \end{equation*}
  and $L_{n}\log L$ converges to $L\log L$ in $L^{1}(\pi)$.

  Assume now that $L$ is lower and upper bounded by respectively $m$ and $M$. We
  know that there exists $\dot y\in \pred^{++}(\mathcal{N},\pi)$ such that
  \begin{equation*}
    L=\Lambda_{y}.
  \end{equation*}

  Set $$L_{n}=\Lambda_{1+(\dot y^{n}-1)}=\espi{\pi}{L\,|\, \mathcal{N}_{n}}.$$
  Since $L$ is bounded, it is clear that \eqref{eq_hawkes_brouillon:42} and
  \eqref{eq_hawkes_brouillon:43} hold. We can then assume that there exists
  $T>0$ such that $\dot y(N,s)=1$ for $s\ge T$.

  Moreover, from the Malliavin calculus for Poisson process, we know (see
  \cite{DecreusefondUpperboundsRubinstein2010}) that
  \begin{equation*}
    \dot y(N,s)=\frac{\espi{\pi}{D_{s}L\,|\, \mathcal{N}_{s}}}{\espi{\pi}{L\,|\,\mathcal{N}_{s}}}
  \end{equation*}
  where $D_{s}F(N)=F(N+\epsilon_{s})-F(N)$. We then have
  \begin{equation*}
    0\le \dot y(N,s)\le \frac{M}{m}\cdotp
  \end{equation*}
  Consider $\dot y_{n}=\dot y\vee n^{-1}$, it is straightforward that
  \eqref{eq_hawkes_brouillon:42} and \eqref{eq_hawkes_brouillon:43} hold.

  Finally, assume that $\dot y$ is lower and upper bounded on some interval
  $[0,T]$ and equal to $1$ above $T$. Set
  \begin{align*}
    \dot y_{n}(s) & =0 \text{ if } s\in [0,T/n)                                                \\
    \dot y_{n}(s) & =n\int_{(i-1)/n}^{i/n}\dot y(N,s)\dif s \text{ if } s\in [iT/n,\ (i+1)T/n)
  \end{align*}
  for $i\in \{1,\cdots, n-1\}$. We see that $\dot y_{n}$ belongs to
  $\pred^{++}_{\infty,\pc}(\mathcal{N},\pi)$. We know (see
  \cite{nualart_malliavin_1995}) that $\dot y_{n}$ converges in
  $L^{2}(\NN\times[0,T],\pi\otimes \D s)$ to $\dot y$. Moreover, it is easy to
  see that
  \begin{equation*}
    \sup_{n}\espi{\pi}{\Lambda_{y_{n}}^{p}}<\infty \text{ for any }p\ge 1.
  \end{equation*}
  Thus, \eqref{eq_hawkes_brouillon:42} and \eqref{eq_hawkes_brouillon:43} hold.
\end{proof}

\section{Weak and strong g-Hawkes processes}
\label{sec:weak-strong-hawkes}
This section does not directly utilize the prior results; rather, it is inspired
by the analogy drawn between addressing a volatility-$1$ Brownian stochastic
differential equation (SDE) and the formulation of a generalized Hawkes process.
In the context of SDEs, three distinct types of solutions are recognized: strong
solutions if for any given filtered probability space on which $B$ is built, we
can build a process $X$ which satisfies \eqref{eq_hawkes_brouillon:10}; weak
solutions if we have to specify the probability space; and martingale solutions
which require that the expression
\begin{equation*}
  X(t)-\int_{0}^{t}b\bigl(X(s),s\bigr)\dif s
\end{equation*}
is a local martingale of square bracket $(t\mapsto t)$. The interrelations among
these various types of solutions have been well-documented in the literature and
can be found in numerous textbooks, such as \cite{Stroock2006}. This culminates
in the Yamada-Watanabe theorem, which asserts that weak existence and strong
uniqueness together imply strong existence. In this work, we demonstrate that
analogous definitions of these different types of solutions can be established
for the construction of Hawkes processes, and we find a precise correspondence
to the Yamada-Watanabe theorem.

\def\F{\mathcal{F}}
\begin{definition}
  A filtered probability space is a triplet $(\Omega,\F,\P)$ where $\Omega$ is a
  space, equipped with a right-continuous filtration $\F$ and a probability
  $\P$.
\end{definition}
In what follows, we equip $\NN$ with the minimum filtration:
\begin{equation*}
  \mathcal{N}_{t}=\sigma\left\{\omega\bigl([0,s]\bigr),\, 0\le s\le t\right\}.
\end{equation*}
The process $y\, :\, \NN\to \R$ is supposed to be $\mathcal{N}$-predictable and
$\dot y$ is positive. We define the different notions of solution associated to
the generalized Hawkes problem associated to~$y$.
\begin{definition}[g-Hawkes problem]\label{def_hawkes_core:hawkes_problem}
  Consider $(\Omega,\mathcal{F},\P)$ a filtered probability space. By a solution
  of $\hawkes$, we mean a couple of processes $\ZZ=(Z,N)$ such that
  \begin{enumerate}
    \item \label{item_hawkes_core:6} With probability one, $Z$ and $N$ are point
          processes in the sense of
          Definition~\ref{def_hawkes_core:definition_point_processes},
    \item \label{item_hawkes_core:7} For any $t\ge 0$, the random variable
          $y(Z,t)$ is a $\F$ stopping time,
    \item \label{item_hawkes_core:8} The process
          \begin{math}
            ( N(t)-t,\ t\ge 0)
          \end{math}
          is a $\F$-local martingale,
    \item \label{item_hawkes_core:9} The processes $Z$ and $N$ satisfy,
          $\P$-a.s. for any $t\ge 0$,
          \begin{align}
            Z(t) & =N\Bigl(y(Z,t)\Bigr).\label{eq_hawkes_core:26}
          \end{align}
  \end{enumerate}
\end{definition}
\begin{remark}
  Note that item \ref{item_hawkes_core:6}. of
  Definition~\ref{def_hawkes_core:hawkes_problem} and \eqref{eq_hawkes_core:26}
  imply that
  \begin{equation}\label{eq_hawkes_core:6}
    y\Bigl(\bigl\{T_{1}(Z),\cdots,T_{q-1}(Z)\bigr\},\, T_{q}(Z)\Bigr)=T_{q}(N), \ \forall q\ge 1.
  \end{equation}
  However, \eqref{eq_hawkes_core:6} does not imply immediately
  \eqref{eq_hawkes_core:26} as it is not clear that $y(Z,t)$ is an $\mathcal{N}$
  stopping time and thus that the application $t\mapsto N\bigl(y(Z,t)\bigr)$ is
  measurable.
\end{remark}
\begin{definition}[Weak and strong solutions]
  If we must specify the probability space $(\Omega,\F,\P)$ then the solution is
  said to be weak.

  If we can find a solution for any filtered probability space on which we can
  construct a unit rate Poisson process, then the solution is said to be strong.
\end{definition}
\begin{definition}[Martingale problem]
  Let $(\Omega,\mathcal{F},\P)$ be given and $Z$ a point process.
  If the following conditions are satisfied:
  \begin{enumerate}[label=\roman*),resume]
    \item \label{item:8} $y^{*}(Z,t)$ is $\F$-adapted,
    \item \label{item:9} the process
          \begin{equation*}
            t\mapsto Z\bigl( y^{*}(Z,t) \bigr)-t
          \end{equation*}
          is a $\F$-local martingale,
  \end{enumerate}
  we then say that $(Z,\F)$ satisfies the g-Hawkes martingale problem, denoted
  by $\hawkes^{m}$.
\end{definition}
The first theorem is the exact analog of what happens for stochastic
differential equations driven by a Brownian motion.
\begin{theorem}[Equivalence of weak and martingale
  solutions]\label{thm:weakToMartingale}
  Let $\dot y \in \pred^{++}(\F)$. There exists a weak solution to $\hawkes$ if
  and only if there exists a solution to $\hawkes^{m}$.
\end{theorem}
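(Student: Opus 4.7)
The plan rests on the homeomorphism relations \eqref{eq_hawkes_brouillon:3}: since $\dot y \in \pred^{++}(\F)$ is strictly positive, $y(Z,\cdot)$ is a continuous, strictly increasing bijection of $\R^{+}$ with continuous inverse $y^{*}(Z,\cdot)$, so $y \circ y^{*} = y^{*} \circ y = \mathrm{Id}$ almost surely. This lets me freely pass between $Z = N \circ y(Z,\cdot)$ and $N = Z \circ y^{*}(Z,\cdot)$. The two tools I will use are the time-change theorem for local martingales (\cite[Chap.~10]{JacodCalculstochastiqueproblemes1979}, cf.~Theorem~\ref{thm_hawkes_brouillon:chgtdetemps}) and Watanabe's characterization of the Poisson process (Theorem~\ref{thm_hawkes_core:watanabe_charac}).

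For the direction weak $\Rightarrow$ martingale, I would start from $(Z,N)$ on $(\Omega,\F,\P)$ solving $\hawkes$. Since $N-t$ is an $\F$-local martingale and each $y(Z,t)$ is an $\F$-stopping time, Jacod's time-change theorem yields that $Z - y(Z,\cdot) = N \circ y(Z,\cdot) - y(Z,\cdot)$ is a local martingale for the filtration $\F^{y(Z,\cdot)} = (\F_{y(Z,t)})_{t \ge 0}$. The inclusion $\mathcal{Z}_{t} \subset \F_{y(Z,t)}$ (because $Z(s) = N(y(Z,s)) \in \F_{y(Z,s)}$), together with the $\mathcal{Z}$-predictability of $y(Z,\cdot)$ inherited from $y \in \pred^{++}(\mathcal{N})$, allows me to project this local martingale property down onto the subfiltration $\mathcal{Z}$ itself, so $y(Z,\cdot)$ is the $\mathcal{Z}$-compensator of $Z$. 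I then set $\mathcal{H}_{t} := \mathcal{Z}_{y^{*}(Z,t)}$, which is well-defined because $y^{*}(Z,t)$ is a $\mathcal{Z}$-stopping time: by continuity of $y(Z,\cdot)$, $\{y^{*}(Z,t) \le s\} = \{y(Z,s) \ge t\} \in \mathcal{Z}_{s}$. A second application of the time-change theorem with $y^{*}(Z,\cdot)$ gives that $Z \circ y^{*}(Z,\cdot) - t$ is an $\mathcal{H}$-local martingale, while $y^{*}(Z,t)$ is $\mathcal{H}_{t}$-measurable by the very definition of $\mathcal{H}_{t}$. This verifies both conditions of $\hawkes^{m}$ on $(\Omega, \mathcal{H}, \P)$.

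For the reverse direction, starting from $(Z,\F)$ solving $\hawkes^{m}$, I define $N(t) := Z(y^{*}(Z,t))$. The hypothesis is exactly that $N - t$ is an $\F$-local martingale, and since $N$ inherits the counting-process sample paths of $Z$, Theorem~\ref{thm_hawkes_core:watanabe_charac} delivers that $N$ is a unit-rate Poisson process on $(\Omega,\F,\P)$. The $\F$-stopping time condition on $y(Z,t)$ follows from the bijectivity and continuity of $y(Z,\cdot)$: $\{y(Z,t) \le s\} = \{y^{*}(Z,s) \ge t\} \in \F_{s}$, since $y^{*}(Z,s)$ is $\F_{s}$-measurable by the hypothesis \ref{item:8}. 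Finally \eqref{eq_hawkes_core:26} is just $N(y(Z,t)) = Z(y^{*}(Z, y(Z,t))) = Z(t)$. The main obstacle lies in the forward direction, where one must identify a single filtration, namely $\mathcal{H}_{t} = \mathcal{Z}_{y^{*}(Z,t)}$, under which $y^{*}(Z,t)$ is adapted \emph{and} the time-change theorem can be applied coherently a second time; the backward direction amounts to Watanabe's theorem together with elementary manipulations coming from the homeomorphism identity.
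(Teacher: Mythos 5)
Your backward direction is essentially identical to the paper's. But your forward direction takes a genuinely different, and considerably longer, route than the paper. Where you invoke Jacod's time-change theorem to pass from $N - \Id$ being an $\F$-local martingale to $Z - y(Z,\cdot)$ being an $\F^{y(Z,\cdot)}$-local martingale, then project down to $\mathcal Z$, then time-change a second time to land on $\mathcal H_t := \mathcal Z_{y^*(Z,t)}$, the paper simply observes that, because $y(Z,\cdot)$ is $\P$-a.s.\ a homeomorphism of $\R^+$, the identity $Z\bigl(y^*(Z,t)\bigr) = N(t)$ holds pathwise, so condition \ref{item:9} of the martingale problem reduces verbatim to item \ref{item_hawkes_core:8} of the weak solution, with the \emph{same} filtration $\F$ and no time-change argument at all. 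Item \ref{item:8} is handled the same way in both proofs (the stopping-time computation $\{y^*(Z,t)\le s\} = \{y(Z,s)\ge t\}$), so the paper concludes directly that $(Z,\F)$ solves $\hawkes^m$.

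Your route is not wrong, but it buys nothing and incurs a debt: the ``project this local martingale property down onto the subfiltration $\mathcal Z$'' step is not automatic. To make it rigorous you would need to exhibit a localizing sequence of $\mathcal Z$-stopping times (e.g.\ the jump times $T_q(Z)$), check they localize the $\F^{y(Z,\cdot)}$-martingale, and only then invoke the tower property; you also owe an argument that $\mathcal H$ is right-continuous, which the paper's blanket assumption on filtrations requires. All of this can be done, but it is work the paper avoids by exploiting the pathwise identity $Z \circ y^*(Z,\cdot) = N$ at the outset. It is worth internalising that, in this setting, the martingale property of $Z\circ y^*(Z,\cdot) - \Id$ is not something to be \emph{re-derived} by time change: it is literally the hypothesis that $N - \Id$ is a local martingale, read through the change of variable.
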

\begin{proof}
  Assume that there exists $(\Omega,\F,\P)$ and $\Upsilon=(Z,N)$ a solution to
  $\hawkes$. We assumed that $\dot y>0,$ thus
  \begin{equation*}
    \bigl(y^{*}(Z,t)\le s\bigr)=\bigl(y(Z,s)\ge t\bigr)\in  \F_{t},
  \end{equation*}
  since $y(Z,t)$ is an $\mathcal{F}$-stopping time. Thus the process
  $(y^{*}(Z,t),\, t\ge 0)$ is $\F$-adapted. Moreover, as $y^{*}(Z,.)$ is an
  homeomorphism on $\R^{+}$, we have, $\P$-a.s.,
  \begin{equation*}
    Z\bigl( y^{*}(Z,t) \bigr)=N(t),\  \forall t\ge 0,
  \end{equation*}
  and then
  \begin{equation*}
    t\longmapsto Z\bigl( y^{*}(Z,t) \bigr)
  \end{equation*}
  is an $\F$-Poisson process of intensity~$1$, i.e. point \ref{item:9} holds and
  $(Z,\F)$ solves $\hawkes^{m}$.

  Conversely, if $(\Omega,\F,\P,Z) $ solves $\hawkes^{m}$, this means that $N$
  defined by
  \begin{equation*}\label{eq_hawkes_brouillon:17b}
    N(t):=Z\bigl( y^{*}(Z,t) \bigr)
  \end{equation*}
  is a unit rate Poisson process with respect to the filtration $\F$
  and taking the inverse of $y^{*}(Z,.)$, we have
  \begin{equation*}
    Z(t)=N\bigl(y(Z,t)\bigr),
  \end{equation*}
  thus~\eqref{eq_hawkes_core:26} is satisfied. Moreover, from \ref{item:8} we
  deduce that
  \begin{equation*}
    \bigl(y(Z,t)\le s\bigr)=\bigl(y^{*}(Z,s)\ge t\bigr)\in \F_{s}.
  \end{equation*}
  Hence Point \ref{item_hawkes_core:7} of
  Definition~\ref{def_hawkes_core:hawkes_problem} is satisfied and we can then
  say that $(Z,N)$ satisfies $\hawkes$ on $(\Omega,\F,\P)$ where $N$ is defined
  by~\eqref{eq_hawkes_brouillon:17b}.
\end{proof}
\begin{definition}[Pathwise uniqueness]
  We say that path-wise uniqueness holds for $\hawkes$ whenever for any two
  solutions $\ZZ=(Z,N)$ and $\ZZ'=(Z',N')$ defined on the same filtered
  probability space $(\Omega,\mathcal{F},\P)$, we have
  \begin{equation*}
    Z=Z',\ N=N'
  \end{equation*}
  up to $\P$-indistinguishability.
\end{definition}
\begin{definition}[Weak Uniqueness]
  We say that weak uniqueness holds for $\hawkes$ whenever for any two solutions
  $\ZZ=(Z,N)$ and $\ZZ'=(Z',N')$, possibly defined on different probability
  spaces, the law of $Z$ and $Z'$ on the space $(\NN,\mathcal{N}_{\infty})$ do
  coincide.
\end{definition}
For any $k\ge 1$, on $\NN^{k}$, we consider the $\sigma$-field
$\mathcal{N}^{\otimes (k)}$ defined by
\begin{equation*}
  \mathcal{N}^{\otimes (k)}=\otimes_{i=1}^{k}\mathcal{N}^{i}
  \text{ where }
  \mathcal{N}^{i}
  =\sigma\left(\eta_{i}(s),\, s\ge 0\right), \text{ for } i=1,\cdots,k.
\end{equation*}
For any $t\ge 0$, we introduce the sub-$\sigma$ field
\begin{equation*}
  \mathcal{N}^{\otimes (k)}_{t}=\otimes_{i=1}^{k}\mathcal{N}^{i}_{t}.
\end{equation*}
For any solution $\ZZ$ of $\hawkes$, let $\mu$ be the law of $\ZZ$ on
$(\NN^{2},\, \mathcal{N}^{\otimes (2)})$, i.e. $\mu=\Upsilon^{\#}\P$. Let
$\mu_{\eta_{2}}(\dif \eta_{1})$ be the regular conditional distribution of
$\mu(\dif \eta_{1},\dif \eta_{2})$ given $\eta_{2}$:
\begin{enumerate}
  \item For each $\eta_{2}$, $\mu_{\eta_{2}}(\dif \eta_{1})$ is a probability
        measure on $(\NN,\mathcal{N}^{1})$,
  \item For each $B\in \mathcal{N}^{1}$, $\mu_{\eta_{2}}(B)$ is
        $\mathcal{N}^{2}$-measurable in $\eta_{2}$,
  \item For any $B\in \mathcal{N}^{1},\, B'\in \mathcal{N}^{2}$,
        \begin{equation*}
          \mu(B\times B')=\int_{B'}\mu_{\eta_{2}}(B)\dif \pi(\eta_{2}).
        \end{equation*}
\end{enumerate}
Let $\mu_{\eta_{2}}^{t}$ be the regular conditional distribution of $\mu$ given
$\mathcal{N}_{t}$:
\begin{enumerate}
  \item For each $\eta_{2}\in \NN$, $\mu_{\eta_{2}}^{t}$ is a measure on $\NN$,
  \item for each $B\in \mathcal{N}$, the random variable $\mu_{\eta_{2}}^{t}(B)$
        is $\mathcal{N}_{t}\ /\ \mathcal{B}(\R^{+})$-measurable,
  \item for any $B'\in \mathcal{N}_{t}$,
        \begin{equation*}
          \mu(B\times B')=\int_{B'}\mu_{\eta_{2}}^{t}(B)\dif \pi(\eta_{2}).
        \end{equation*}
\end{enumerate}
\begin{lemma}
  \label{lem_hawkes_core:mesurabilite_Z}
  For $B\in \mathcal{N}_{y^{*}(\eta_{1},t)}^{1}$, the map
  $\eta_{2}\mapsto \mu_{\eta_{2}}(B)$ is
  $\mathcal{N}_{t}^{2}\ /\ \mathcal{B}(\R^{+})$-measurable.
\end{lemma}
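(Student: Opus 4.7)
The plan is to exploit the g-Hawkes equation $Z(t)=N(y(Z,t))$ satisfied by any solution $\Upsilon=(Z,N)$ to transfer information measured up to the random time $y^{*}(Z,t)$ on the $Z$-side onto information measured up to the deterministic time $t$ on the $N$-side. The first step is to establish the pathwise identity of $\sigma$-algebras
\begin{equation*}
\sigma\bigl(Z(s),\, s\le y^{*}(Z,t)\bigr)=\sigma\bigl(N(u),\, u\le t\bigr),
\end{equation*}
which is Corollary~\ref{lem_hawkes_core:egalité_tribu_hawkes} transposed to the abstract probability space where $\Upsilon$ lives. The inclusion $\subset$ comes from $Z(s)=N(y(Z,s))$ together with $y(Z,s)\le y(Z,y^{*}(Z,t))=t$ for $s\le y^{*}(Z,t)$, using that $y$ is a homeomorphism of $\R^{+}$. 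The reverse inclusion uses the inverse relation $N(u)=Z(y^{*}(Z,u))$, which for $u\le t$ makes $N(u)$ measurable with respect to $Z$ stopped at $y^{*}(Z,t)$.

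The second step is to translate this into a statement about $B$. Let $B\in \mathcal{N}^{1}_{y^{*}(\eta_{1},t)}$, so that by definition $B\cap\{\eta_{1}\,:\,y^{*}(\eta_{1},t)\le s\}\in \mathcal{N}^{1}_{s}$ for every $s\ge 0$. Pulling back along $Z:\Omega\to \NN$ yields
\begin{equation*}
\{Z\in B\}\cap \{y^{*}(Z,t)\le s\}\in \sigma\bigl(Z(u),\,u\le s\bigr)
\end{equation*}
for every $s\ge 0$, hence $\{Z\in B\}$ belongs to $\sigma(Z(s),\,s\le y^{*}(Z,t))$, which by the identification above means $\{Z\in B\}\in \sigma(N(u),\,u\le t)$. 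Consequently, there is an $\mathcal{N}^{2}_{t}$-measurable map $g:\NN\to \{0,1\}$ such that $\indic_{\{Z\in B\}}=g(N)$, $\P$-almost surely.

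The third and final step is to invoke the defining property of the regular conditional distribution $\mu_{\eta_{2}}$. Since the $\eta_{2}$-marginal of $\mu$ is $\pi$ (the process $N$ is a $\P$-unit Poisson by item~\ref{item_hawkes_core:8} of Definition~\ref{def_hawkes_core:hawkes_problem}), for every bounded $\mathcal{N}^{2}_{\infty}$-measurable function $h$ we have
\begin{equation*}
\int h(\eta_{2})\,\mu_{\eta_{2}}(B)\dif \pi(\eta_{2})=\espi{\P}{h(N)\,\indic_{\{Z\in B\}}}=\espi{\P}{h(N)\,g(N)}=\int h(\eta_{2})\,g(\eta_{2})\dif \pi(\eta_{2}),
\end{equation*}
so that $\mu_{\eta_{2}}(B)=g(\eta_{2})$ for $\pi$-almost every~$\eta_{2}$, which is $\mathcal{N}^{2}_{t}$-measurable.

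The only delicate point is the first step: verifying that the g-Hawkes identity and its pointwise inverse indeed identify $\mathcal{Z}_{y^{*}(Z,t)}$ with $\sigma(N(u),\,u\le t)$ on the abstract probability space. Once this is in hand, the pullback argument and the manipulation of the regular conditional distribution are routine.
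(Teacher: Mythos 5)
Your overall strategy matches the paper's: first show that $\indic_{B}(\eta_{1})$ is (modulo $\mu$-null sets) $\mathcal{N}^{2}_{t}$-measurable, then deduce the measurability of $\eta_{2}\mapsto\mu_{\eta_{2}}(B)$. Your third step is in fact a genuine simplification of the paper's second step: the paper invokes the Poisson martingale representation theorem to decouple $F(\eta_{2})$ from $\indic_{B}(\eta_{1})$ and identify $\mu_{\eta_{2}}(B)$ with $\mu_{\eta_{2}}^{t}(B)$, whereas your argument proceeds directly from the defining property of the regular conditional distribution and the fact that $\indic_{\{Z\in B\}}=g(N)$ with $g$ $\mathcal{N}^{2}_{t}$-measurable. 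That is cleaner and avoids stochastic-integral technology entirely.

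The gap is exactly where you flag it, and it is not a small detail. You assert the pathwise $\sigma$-field identity
\begin{equation*}
\sigma\bigl(Z(s),\, s\le y^{*}(Z,t)\bigr)=\sigma\bigl(N(u),\, u\le t\bigr)
\end{equation*}
as ``Corollary~\ref{lem_hawkes_core:egalité_tribu_hawkes} transposed to the abstract probability space''. But the corollary states $\mathcal{Z}_{s}=\mathcal{N}_{y(Z,s)}$ for \emph{deterministic} $s$; passing to the \emph{random} argument $s=y^{*}(Z,t)$ is precisely the delicate step you would have to supply. Stopping the filtration $\bigl(\mathcal{Z}_{s}\bigr)_{s}$ at the stopping time $\tau=y^{*}(Z,t)$ and then replacing $\mathcal{Z}_{s}$ by $\mathcal{N}_{y(Z,s)}$ involves a random change of variable inside the universal quantifier defining $\mathcal{Z}_{\tau}=\{A:A\cap(\tau\le s)\in\mathcal{Z}_{s}\ \forall s\}$, and this does not follow formally from the deterministic-time equality. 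The paper sidesteps this issue entirely: using \cite[Proposition~3.40]{JacodCalculstochastiqueproblemes1979} it writes $\mathcal{N}^{1}_{y^{*}(\eta_{1},t)}$ as generated by explicit sets of the form
\begin{equation*}
\bigcap_{j=0}^{q}\bigl(T_{j}(\eta_{1})\le a_{j}\bigr)\cap\bigl(T_{q}(\eta_{1})<y^{*}(\eta_{1},t)\le T_{q+1}(\eta_{1})\bigr),
\end{equation*}
transforms each via the pathwise g-Hawkes relation $T_{k}(\eta_{1})=y^{*}(\eta_{1},T_{k}(\eta_{2}))$, and uses that $y(\eta_{1},a_{j})$ is an $\mathcal{N}^{2}$-stopping time (item~\ref{item_hawkes_core:7} of the g-Hawkes definition) to place the transformed set in $\mathcal{N}^{2}_{t}$. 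That explicit generating-set computation is the actual content of the first step, and it is exactly what your proposal outsources to the unproved ``transposed corollary''. If you fill in that argument (either by proving the random-time filtration identity carefully or by reproducing the generating-set computation), the rest of your proof — the pullback along $Z$ and the conclusion via the r.c.d. — goes through, and it is more elementary than the paper's use of martingale representation.
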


\begin{proof}
  We first prove that for $B\in \mathcal{N}^{1}_{y^{*}(\eta_{1},t)}$, there
  exists $\theta$ measurable, such that
  \begin{equation}
    \label{eq_hawkes_core:34}
    \indic_{B}(\eta_{1})=\theta(\eta_{2}^{t}), \ \mu\text{-a.s.}
  \end{equation}
  or equivalently, $\indic_{B}(\eta_{1})\in \mathcal{N}^{2}_{t}.$ According to
  \cite{JacodCalculstochastiqueproblemes1979}, we know that
  $\mathcal{N}^{1}_{y^{*}(\eta_{1},t)}$ is generated by sets of the form
  \begin{equation}\label{eq_hawkes_core:35}
    \bigcap_{j=0}^{q}\Bigl(T_{j}(\eta_{1})\le a_{j}\Bigr)\cap \Bigl(T_{q}(\eta_{1})<y^{*}(\eta_{1},t)\le T_{q+1}(\eta_{1})\Bigr)
  \end{equation}
  for some $a_{j}\in \R^{+}$ and some $q\in \N$. Recall that
  \eqref{eq_hawkes_core:26} implies that
  \begin{equation*}
    \mu\Bigl(\eta_{1}(t)=\eta_{2}\bigl(y(\eta_{1},\, t)\bigr), \ \forall t\ge 0\Bigr)=1.
  \end{equation*}
  This means that
  \begin{equation*}
    y \bigl(\eta_{1},T_{k}(\eta_{1})\bigr)=T_{k}(\eta_{2})
    \Longleftrightarrow T_{k}(\eta_{1})=y^{*}\bigl(\eta_{1},T_{k}(\eta_{2})\bigr).
  \end{equation*}
  Hence for a set $B$ of the form \eqref{eq_hawkes_core:35}, we have
  \begin{align*}
    B & =\bigcap_{j=0}^{q}\Bigl(y^{*}\bigl(\eta_{1},T_{j}(\eta_{2})\bigr)\le a_{j}\Bigr)\cap \Bigl(T_{q}(\eta_{2})<t\le T_{q+1}(\eta_{2})\Bigr) \\
      & =\bigcap_{j=0}^{q}\Bigl(T_{j}(\eta_{2})\le y(\eta_{1},a_{j})\Bigr)\cap \Bigl(T_{q}(\eta_{2})<t\le T_{q+1}(\eta_{2})\Bigr)
  \end{align*}
  By definition of the g-Hawkes problem, $y(\eta_{1},a)$ is a
  $\mathcal{N}^{2}$-stopping time hence for any $j\le q$,
  \begin{equation*}
    \Bigl(T_{j}(\eta_{2})\le y(\eta_{1},a_{j})\Bigr)\in \mathcal{N}^{2}_{T_{j}(\eta_{2})\wedge y(\eta_{1},a_{j})}\subset \mathcal{N}^{2}_{T_{j}(\eta_{2})}\subset \mathcal{N}^{2}_{T_{q}(\eta_{2})}.
  \end{equation*}
  Hence $ B\in \mathcal{N}^{2}_{t}. $

  Let $B\in \mathcal{N}_{y^{*}(\eta_{1},t)}^{1}$. For the second step of the
  proof, we have to prove that for $F\,:\, \NN\to \R$ measurable and bounded
  \begin{equation*}
    \int_{\NN\times \NN}F(\eta_{2})\indic_{B}(\eta_{1})\ \mu(\dif \eta_{1},\, \dif \eta_{2})= \int_{\NN\times \NN}F(\eta_{2})\mu_{\eta_{2}}^{t}(B)\ \mu(\dif \eta_{1},\, \dif \eta_{2}).
  \end{equation*}
  Since $\eta_{2}$ is a Poisson process, for any $F$ bounded, the martingale
  representation theorem valid for Poisson processes says that there exists
  $u_{F}$ which is $\mathcal{N}$-adapted such that
  \begin{equation*}
    \esp{\int_{0}^{\infty}u_{F}(\eta_{2},\,s)^{2}\dif s}<\infty
  \end{equation*}
  and
  \begin{equation*}
    F(\eta_{2})=\esp{F}+\int_{0}^{\infty}u_{F}(\eta_{2},\,s)\dif \tilde \eta_{2}(s)
  \end{equation*}
  where $\tilde \eta_{2}(t)=\eta_{2}(t)-t.$ The process
  \begin{equation*}
    (\eta_{2},t)\longmapsto \int_{0}^{t}u_{F}(\eta_{2},\,s)\dif \tilde\eta_{2}(s)
  \end{equation*}
  is a square integrable martingale with respect to the filtration $\mathcal{N}$
  and thus it is also a square martingale with respect to the filtration
  $\mathcal{N}^{\otimes (2)}$. Now then we have
  \begin{multline*}
    \int_{\NN\times \NN}F(\eta_{2})\indic_{B}(\eta_{1})\ \mu(\dif \eta_{1},\, \dif \eta_{2})= \esp{F}\int_{\NN\times \NN}\indic_{B}(\eta_{1})\ \mu(\dif \eta_{1},\, \dif \eta_{2})\\
    + \int_{\NN\times \NN}\left(\int_{t}^{\infty}u_{F}(\eta_{2},s)\dif \tilde\eta_{2}(s)\right)\indic_{B}(\eta_{1})\ \mu(\dif \eta_{1},\, \dif \eta_{2})\\+ \int_{\NN\times \NN}\left(\int_{0}^{t}u_{F}(\eta,s)\dif \tilde\eta(s)\right)\indic_{B}(\eta_{1})\ \mu(\dif \eta_{1},\, \dif \eta_{2}).
  \end{multline*}
  By the first part of the proof, $\indic_{B}(\eta_{1})=\theta(\eta_{2}^{t})$
  $\mu$-a.s. Hence, by the martingale property of the stochastic integral, the
  median term is null. We thus get
  \begin{align*}
    \int_{\NN\times \NN}F(\eta)\indic_{B}(z)\ \mu(\dif z,\, \dif \eta) & =\int_{\NN} \left[\esp{F}+\left(\int_{0}^{t}u_{F}(\eta,s)\dif \tilde\eta(s)\right)\right] \ \mu_{\eta}^{t}(B)\dif \pi(\eta) \\
                                                                       & =\int_{\NN} F(\eta)\ \mu_{\eta}^{t}(B)\dif \pi(\eta)
  \end{align*}
  by the same kind of reasoning.

\end{proof}
\begin{theorem}[Yamada-Watanabe]\label{thm_hawkes_core:YW}
  With the same notations as above. Consider the $\hawkes$ problem as in
  Definition~\ref{def_hawkes_core:hawkes_problem}. The following two properties
  hold:
  \begin{enumerate}
    \item Pathwise uniqueness implies weak uniqueness.
    \item Moreover, if there exists a solution $\ZZ=(Z,N)$ of $\hawkes$ on some
          $(\Omega,\mathcal{F},\P)$ and pathwise uniqueness holds then there
          exists $F\,:\, \NN\to \NN$ such that $Z=F(N)$ $\P$-a.s. Furthermore,
          the map $F$ is $\mathcal{N}_{t}\, /\, \mathcal{Z}_{y^{*}(Z,t)}$
          measurable, i.e. for any $t\ge 0$,
          \begin{equation*}
            \mathcal{Z}_{y ^{*}(Z,t)}=\sigma\left\{F\bigl(N(s)\bigr),\, s\le t\right\}.
          \end{equation*}
  \end{enumerate}
\end{theorem}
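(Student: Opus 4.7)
The plan is to mimic the classical Yamada-Watanabe gluing argument, using Lemma \ref{lem_hawkes_core:mesurabilite_Z} as the crucial technical input; both assertions of the theorem follow from the same coupling construction. Given two weak solutions $\Upsilon_i=(Z_i,N_i)$ on $(\Omega_i,\mathcal{F}^i,\mathbf{P}_i)$, $i=1,2$ (for the second assertion we take $\Upsilon_1=\Upsilon_2=\Upsilon$, the given solution), set $\mu_i=\Upsilon_i^{\#}\mathbf{P}_i$ on $(\NN^2,\mathcal{N}^{\otimes(2)})$. Because $N_i\sim\pi$, the second marginal of $\mu_i$ is $\pi$, so I may disintegrate $\mu_i(d\eta_1,d\eta_2)=\mu_{i,\eta_2}(d\eta_1)\,\pi(d\eta_2)$. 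On $\NN^3$ with coordinate maps $(\eta_1,\eta_1',\eta_2)$, form the glued probability
\[
\hat\mu(d\eta_1,d\eta_1',d\eta_2)=\mu_{1,\eta_2}(d\eta_1)\otimes\mu_{2,\eta_2}(d\eta_1')\otimes\pi(d\eta_2),
\]
and equip $\NN^3$ with the filtration $\hat{\mathcal{F}}_t=\mathcal{N}^2_t$ generated by $\eta_2$ alone.

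The key step is to verify that both $(\eta_1,\eta_2)$ and $(\eta_1',\eta_2)$ are solutions of $\hawkes$ on $(\NN^3,\hat{\mathcal{F}},\hat\mu)$. Conditions \ref{item_hawkes_core:6}, \ref{item_hawkes_core:8} and \ref{item_hawkes_core:9} are immediate from the marginal laws being $\mu_i$ and from $\eta_2\sim\pi$, so $\eta_2(t)-t$ is automatically an $\mathcal{N}^2$-local martingale. The non-trivial condition \ref{item_hawkes_core:7}, that $y(\eta_i,t)$ is an $\hat{\mathcal{F}}$-stopping time, is exactly where Lemma \ref{lem_hawkes_core:mesurabilite_Z} applies: inspecting its proof (see equation \eqref{eq_hawkes_core:34}), for any $B\in\mathcal{N}^1_{y^*(\eta_1,t)}$ the indicator $\mathbf{1}_B(\eta_1)$ coincides $\mu_i$-a.s. with an $\mathcal{N}^2_t$-measurable function of $\eta_2$. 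Hence $y^*(\eta_i,t)$ is $\hat\mu$-a.s. equal to an $\mathcal{N}^2_t$-measurable random variable, so $\{y(\eta_i,t)\le s\}=\{y^*(\eta_i,s)\ge t\}\in\mathcal{N}^2_s=\hat{\mathcal{F}}_s$.

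Pathwise uniqueness now forces $\eta_1=\eta_1'$ $\hat\mu$-a.s. By construction, however, $\eta_1$ and $\eta_1'$ are $\hat\mu$-conditionally independent given $\eta_2$ with laws $\mu_{1,\eta_2}$ and $\mu_{2,\eta_2}$; the product $\mu_{1,\eta_2}\otimes\mu_{2,\eta_2}$ can sit on the diagonal of $\NN^2$ only when both factors are a common Dirac mass $\delta_{F(\eta_2)}$ for some measurable $F\colon\NN\to\NN$. This yields $\mu_1=\mu_2$, which is weak uniqueness, and, applied with $\mu_1=\mu_2=\mu$, gives the strong representation $Z=F(N)$ $\mathbf{P}$-a.s. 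The measurability claim is then a direct translation of the same lemma: the identity $\mathbf{1}_B(F(\eta_2))=\mu_{\eta_2}(B)$ shows that for $B\in\mathcal{N}^1_{y^*(\eta_1,t)}$ one has $F^{-1}(B)\in\mathcal{N}^2_t$, which rewrites as $\mathcal{Z}_{y^*(Z,t)}=\sigma\{F(N)(s),\,s\le t\}$.

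The principal obstacle is the verification in the second paragraph: without Lemma \ref{lem_hawkes_core:mesurabilite_Z}, one would have to enlarge the filtration on $\NN^3$ to incorporate the stopped information of $\eta_1$ and $\eta_1'$ and then argue, via an immersion-type property, that $\eta_2$ remains a unit-rate Poisson process in this enlarged filtration -- a delicate point for a discontinuous martingale. The lemma sidesteps this entirely by showing that, from the point of view of the coupled measure $\hat\mu$, the relevant information about each $\eta_i$ up to its random time $y^*(\eta_i,t)$ is already carried by $\mathcal{N}^2_t$, which is why the whole argument can be run with the minimal filtration $\hat{\mathcal{F}}=\mathcal{N}^2$.
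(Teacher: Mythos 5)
Your overall architecture matches the paper's: you build the three-coordinate glued measure and want to apply pathwise uniqueness on the coupled space. But the paper and your proposal use Lemma \ref{lem_hawkes_core:mesurabilite_Z} in opposite directions, and the route you chose creates a genuine technical gap.

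The paper equips $\NN^3$ with the \emph{enlarged} filtration $\mathcal{N}^1_{y^*(\eta_1,t)}\otimes\mathcal{N}^2_{y^*(\eta_2,t)}\otimes\mathcal{N}^3_t$. In this filtration, adaptedness of $\eta_1,\eta_2$ and the stopping-time requirement for $y(\eta_i,t)$ hold \emph{exactly} (not merely almost surely), because $y(\eta_i,t)$ is an $\mathcal{N}^i$-stopping time by construction. The whole burden then falls on showing that the Poisson coordinate $\eta_3$ remains an $\hat{\mathcal{F}}$-Poisson process in this bigger filtration, and this is precisely what Lemma \ref{lem_hawkes_core:mesurabilite_Z} delivers, via the computation around \eqref{eq_hawkes_core:27}: after disintegrating against $\eta_3$, the conditional expectations $\int F_i\,\dif\mu_{\eta_3}$ are $\mathcal{N}^3_s$-measurable, so the martingale increment has zero expectation against $\mathcal{N}^{\otimes(3)}_s$. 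This is exactly the immersion argument you say you are avoiding.

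You instead keep the \emph{minimal} filtration $\hat{\mathcal{F}}=\mathcal{N}^2$ generated by the Poisson coordinate alone, and use the lemma (more precisely \eqref{eq_hawkes_core:34}) to claim that $y^*(\eta_i,t)$ is already $\mathcal{N}^2_t$-measurable. But \eqref{eq_hawkes_core:34} only says $\indic_B(\eta_1)=\theta(\eta_2^t)$ \emph{$\mu$-almost surely}; on the product space $\NN^3$, $\eta_1$ is a genuine coordinate that varies independently of $\eta_2$, so $\{y(\eta_i,t)\le s\}$ is an event in $\mathcal{N}^i_\infty$ and \emph{not} an element of $\mathcal{N}^2_s$, only $\hat\mu$-a.s.\ equal to one. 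Your sentence ``so $\{y(\eta_i,t)\le s\}=\{y^*(\eta_i,s)\ge t\}\in\mathcal{N}^2_s=\hat{\mathcal{F}}_s$'' silently promotes an a.s.\ equality to a literal membership. The paper is explicit that it works in Jacod's uncompleted framework (``\emph{the notion of predictability is defined without any reference to the completion of the filtration}''), and Definition~\ref{def_hawkes_core:hawkes_problem} demands $y(Z,t)$ to \emph{be} an $\mathcal{F}$-stopping time, not merely a.s.\ equal to one. Moreover, the pathwise uniqueness hypothesis is stated for solutions of $\hawkes$ in this strict sense; if you modify $\eta_1,\eta_1'$ to be genuinely $\mathcal{N}^2$-adapted, you have essentially already built the representing map $F$, which makes the argument circular. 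To make your route rigorous you would need to show that the pathwise-uniqueness hypothesis is stable under passage to a $\hat\mu$-completion or to $\hat\mu$-indistinguishable modifications, and to construct a version of $y^*(\eta_i,\cdot)$ that is simultaneously (in $t$) $\mathcal{N}^2$-adapted and still right-continuous and increasing; none of this is addressed. The paper's choice of the enlarged filtration sidesteps all of these issues because the stopping-time and adaptedness requirements are literal there, and only the (provably true via the lemma) immersion of the Poisson martingale needs to be established.

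The remainder of your argument -- conditional independence of $\eta_1,\eta_1'$ given $\eta_2$, equality of the two a.s.\ implying both conditional laws are a common Dirac $\delta_{F(\eta_2)}$, hence weak uniqueness and the strong representation -- is the same as in the paper and is fine. The final measurability claim (that $\mathcal{Z}_{y^*(Z,t)}=\sigma\{F(N)(s),s\le t\}$) is asserted but not derived; the paper goes through Lemma~\ref{lem:EgaliteFiltration} and Corollary~\ref{lem_hawkes_core:egalité_tribu_hawkes} plus the Doob functional representation lemma to get the measurability of $F$ at the filtration level, which is a nontrivial step that your one-line ``direct translation'' elides.
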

\begin{proof}
  Let $\ZZ=(Z,N)$ and $\ZZ'=(Z',N')$ two solutions of $\hawkes$, which are
  possibly defined on different probability spaces, and $\mu$ and $\mu'$ there
  respective distribution on $(\NN^{2};\, \mathcal{N}_{\infty}^{\otimes(2)})$.
  On $(\NN^{3},\, \mathcal{N}_{\infty}^{\otimes(3)})$, we define the probability
  measure
  \begin{equation*}
    \nu(\dif \eta_{1},\dif \eta_{2},\dif \eta_{3})=\mu_{{\eta_3}}(\dif \eta_{1})\mu'_{{\eta_3}}(\dif \eta_{2})\dif \pi(\eta_{3}).
  \end{equation*}
  We are going to prove that $\eta_{{3}}$ is a $\nu$ Poisson process. For, for
  $i=1,2,3$, let $F_{i}$ a bounded function,
  $\mathcal{N}_{y^{*}(\eta_{i},s)}^{i}\, /\, \mathcal{B}(\R)$ measurable for
  $i=1,2$ and $\mathcal{N}^{3}_{s}\,/\, \mathcal{B}(\R)$ measurable for $i=3$.
  We have
  \begin{multline}\label{eq_hawkes_core:27}
    \int_{\NN^{3}}F_{1}(\eta_{1})\ F_{2}(\eta_{2})\ F_{3}(\eta_{3})\ \bigl(\tilde \eta_{3}(t)-\tilde \eta_{3}(s)\bigr)\nu(\dif \eta_{1},\dif \eta_{2},\dif \eta_{3})\\=
    \int_{\NN} \left(\int_{\NN}F_{1}(\eta_{1})\mu_{\eta_{3}}(\dif \eta_{1})\right) \left(\int_{\NN }F_{2}(\eta_{2})\mu'_{\eta_{3}}(\dif \eta_{2})\right)\bigl(\tilde \eta_{3}(t)-\tilde \eta_{3}(s)\bigr)\dif \pi(\eta_{3}).
  \end{multline}
  {where  $\tilde{\eta}_{3}(t)= \eta_3(t) -t.$}
  According to Lemma \ref{lem_hawkes_core:mesurabilite_Z}, the random variables
  \begin{equation*}
    \int_{\NN}F_{1}(\eta_{1})\mu_{\eta_{3}}(\dif \eta_{1}) \text{ and }
    \int_{\NN}F_{2}(\eta_{2})\mu'_{\eta_{3}}(\dif \eta_{2})
  \end{equation*}
  are $\mathcal{N}^{3}_{s}$ measurable
  hence $\mathcal{N}^{\otimes(3)}_{s}$-measurable. It follows that the
  right-hand-side of \eqref{eq_hawkes_core:27} is equal to zero and that
  $\eta_{3}$ is a
  $\Bigr(\nu,\, \bigl(\mathcal{N}^{1}_{y^{*}(\eta_{1},t)}\otimes \mathcal{N}^{2}_{y^{*}(\eta_{2},t)}\otimes \mathcal{N}^{3}_{t},\ t\ge 0\bigr)\Bigl)$
  unit rate Poisson process.

  We thus have two solutions $(\eta_{1},\eta_{3})$ and $(\eta_{2},\eta_{3})$ on
  the same probability space.
  The pathwise uniqueness then implies that $\eta_{1}=\eta_{2}$, $\nu$-a.s.
  Thus, $\mu(\dif \eta_{1},\dif \eta_{3})=\mu'(\dif \eta_{2},\dif \eta_{3})$ and
  the uniqueness in law holds.

  In view of Lemma~\ref{lem:EgaliteFiltration} and Corollary
  \ref{lem_hawkes_core:egalité_tribu_hawkes}, the Doob Lemma says that there
  exist two functions $F_{i},i=1,2$ respectively measurable from
  $(\NN, \mathcal{N}^{3}_{\infty})$ to
  $(\NN,\mathcal{N}^{i}_{y^{*}(\eta_{i},\infty)})=(\NN,\mathcal{N}^{i}_{\infty}), i=1,2$
  such that $\eta_{1}^{y^{*}(\eta_{1},t)}=F_{1}(\eta_{3}^{t})$ and
  $\eta_{2}^{y^{*}(\eta_{2},t)}=F_{2}(\eta_{3}^{t})$ for any $t\ge 0$.
  Furthermore, for $\pi$-almost all $\eta_{3}$,
  \begin{equation*}
    \mu_{\eta_{3}}\otimes \mu_{\eta_{3}}\left(\eta_{1}=\eta_{2}\right)=1
  \end{equation*}
  and this implies that $F_{1}=F_{2}$, i.e. there exists $F$ such that
  $\eta_{1}=\eta_{2}=F(\eta_{3})$.
\end{proof}

\begin{theorem}\label{thm_hawkes_brouillon:weakExistence}
  Let $(\Omega,\F,\P)$ be a probability space and $N$ a unit Poisson process
  such that $\F=\mathcal{N}$. We denote by $\pi$ the law of $N$. For any
  $\dot y\in \pred^{++}_{2}(\mathcal{N},\pi_{y})$, there exists a (weak)
  solution to $\hawkes$.
\end{theorem}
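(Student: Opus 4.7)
The plan is to construct a weak solution directly on the canonical space $(\NN,\mathcal{N}_\infty)$ endowed with the measure $\pi_y$, and then invoke Theorem~\ref{thm:weakToMartingale} to conclude. This mirrors the way one builds a weak solution of a volatility-$1$ Brownian SDE from the Girsanov-transformed measure: we place ourselves on the space where the prospective solution process lives as the canonical process, and then recover the driving Poisson process by applying the time change.

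First, I would use Theorem~\ref{thm_hawkes_core:watanabe_charac}\ref{item_hawkes_core:4}: since $\dot y\in\pred^{++}_2(\mathcal{N},\pi_y)$ is in particular non-negative, $\mathcal{N}$-predictable and locally integrable with $y$ almost surely finite, there exists a unique probability measure $\pi_y$ on $(\NN,\mathcal{N}_\infty)$ under which the canonical process $Z$ is a point process with $(\mathcal{N},\pi_y)$-compensator $y(Z,\cdot)$. I then take as filtered probability space $(\Omega,\F,\P)=(\NN,\mathcal{N}^{y^*}_\infty,\pi_y)$ with $\F_t=\mathcal{N}_{y^*(Z,t)}$; note that $y^*(Z,t)$ is $\F_t$-adapted by the very definition of $\F$, and, by~\eqref{eq_hawkes_core:28}, so is $y(Z,t)$.

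Next, I would verify that $(Z,\F)$ solves the martingale g-Hawkes problem $\hawkes^m$. Condition~\ref{item:8} ($\F$-adaptedness of $y^*(Z,\cdot)$) is immediate. Condition~\ref{item:9} follows by applying Jacod's time-change theorem (Theorem~10.27 of~\cite{JacodCalculstochastiqueproblemes1979}, already invoked in the proof of the quasi-invariance Theorem~\ref{thm_hawkes_brouillon:Pregirsanov}) to the $(\mathcal{N},\pi_y)$-local martingale $Z-y(Z,\cdot)$: its image under the time change $y^*$ is an $(\mathcal{N}^{y^*},\pi_y)$-local martingale. Since $\dot y\in\pred^{++}_2$ forces $y$ to be an homeomorphism of $\R^+$ onto itself, we have $y\circ y^*=\mathrm{Id}$ by~\eqref{eq_hawkes_brouillon:3}, so this time-changed martingale is exactly $Z\circ y^*-\mathrm{Id}$, which is precisely the process required in~\ref{item:9}.

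Finally, I would invoke Theorem~\ref{thm:weakToMartingale} to obtain the desired weak solution: concretely, setting $N(t):=Z(y^*(Z,t))$, the pair $(Z,N)$ satisfies all four items of Definition~\ref{def_hawkes_core:hawkes_problem} on $(\Omega,\F,\P)$, with the identity $Z(t)=N(y(Z,t))$ following from $y^*\circ y=\mathrm{Id}$. The only delicate step is the correct application of Jacod's time-change theorem to identify the compensator after the change of clock; however this is exactly the computation already carried out inside the proof of Theorem~\ref{thm_hawkes_brouillon:Pregirsanov}, so no genuinely new obstacle arises. The whole construction really only relies on the existence statement of Theorem~\ref{thm_hawkes_core:watanabe_charac} together with the time-change machinery of Section~\ref{sec:random-time-changes}.
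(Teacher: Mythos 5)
Your proposal is correct and follows essentially the same route as the paper: both work on the canonical space with measure $\pi_y$, take the canonical process as the g-Hawkes process, and recover the driving Poisson process by applying the time change $y^*$, using the filtration $\F^{y^*}$. The one genuine difference is how you justify that $Z\circ y^*-\mathrm{Id}$ is a local martingale. The paper cites the quasi-invariance Theorem~\ref{thm_hawkes_brouillon:Pregirsanov}, whose proof proceeds under $\pi_\kappa$ and then uses a Girsanov step to pass to $\pi_{y}$. You instead apply Jacod's time-change theorem directly to the $(\mathcal{N},\pi_y)$-local martingale $Z - y(Z,\cdot)$, noting that under $\pi_y$ the compensator of $Z$ is already $y(Z,\cdot)$ (by definition of $\pi_y$ via Theorem~\ref{thm_hawkes_core:watanabe_charac}), so no change of measure is needed. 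This is a small but genuine streamlining: it re-derives the one consequence of quasi-invariance that is actually needed here without invoking the density computation. Combined with the reformulation through $\hawkes^m$ and Theorem~\ref{thm:weakToMartingale} (which is just unwrapping the definitions, exactly as the paper does implicitly when it writes out $N(t)=Y(y(N,t))$), your argument is a valid and marginally more self-contained variant of the paper's proof.
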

\begin{proof}
  In view of Theorem \ref{thm_hawkes_brouillon:absolueContinuite}, $\pi_{y}$ is
  locally absolutely continuous with respect to~$\pi$ and the quasi-invariance
  theorem says that the process $Y$ defined by
  \begin{equation*}
    Y(t):= N\Bigl( y^{*}(N,t) \Bigr),
  \end{equation*}
  is a $(\pi_{y},\F^{y^{*}})$ unit Poisson process. Thus, we have
  \begin{equation*}
    N(t)=Y\bigl( y(N,t) \bigr)
  \end{equation*}
  and $y(N,t)$ is a $\F^{y^{*}}$-stopping time, hence $(N,Y,\F^{y^{*}})$ solves
  $\hawkes^{w}$.
\end{proof}

\begin{theorem}\label{thm_hawkes_brouillon:strongExistenceClassicalHawkes}
  Consider $(\Omega,\F,\P)$ be a filtered probability space and $N$ a unit
  Poisson process. Let $\dot y\in \pred^{++}_{2}(\mathcal{N},\pi_{y})$ such that
  for any $\omega,\eta\in \NN$, for any $t\ge 0$
  \begin{equation}
    \label{eq:LipschitzCondition}
    |\dot y(\omega,t)-\dot y(\eta,t)| \le \int_{0}^{t } |\phi(t-s)| \dif |\omega-\eta|(s)
  \end{equation}
  where $|\omega-\eta|$ is the point process defined by
  \begin{equation*}
    |\omega-\eta|(t)=|\omega(t)-\eta(t)|
  \end{equation*}
  and $\phi$ is such that
  \begin{equation*}
    \int_{0}^{\infty}\phi(s)\dif s<1.
  \end{equation*}
  Then, there exists a unique solution to $\hawkes^{s}$.
\end{theorem}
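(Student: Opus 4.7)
The plan is to combine weak existence from Theorem~\ref{thm_hawkes_brouillon:weakExistence} with pathwise uniqueness driven by the Lipschitz condition~\eqref{eq:LipschitzCondition} together with the hypothesis $\int\phi<1$, and then invoke Theorem~\ref{thm_hawkes_core:YW} (Yamada-Watanabe) to upgrade this to strong existence. Weak existence is immediate: since $\dot y\in\pred^{++}_{2}(\mathcal{N},\pi_{y})$ by hypothesis, Theorem~\ref{thm_hawkes_brouillon:weakExistence} directly yields a weak solution, so the main task is pathwise uniqueness.

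For uniqueness, I would consider two solutions $(Z,N)$ and $(Z',N)$ defined on the same filtered probability space $(\Omega,\F,\P)$ sharing the driving process~$N$, and show $Z=Z'$ $\P$-a.s. Subtracting the two copies of~\eqref{eq_hawkes_core:26} gives $Z(t)-Z'(t)=N(y(Z,t))-N(y(Z',t))$, and since $y(Z,t)$ and $y(Z',t)$ are $\F$-stopping times, optional stopping applied to the compensated Poisson martingale $N-\Id$ produces
\begin{equation*}
  \espi{\P}{|Z(t)-Z'(t)|}\le \espi{\P}{|y(Z,t)-y(Z',t)|}.
\end{equation*}
Then \eqref{eq:LipschitzCondition}, Fubini, and an integration by parts (with $\Phi(x)=\int_{0}^{x}\phi(u)\dif u$, boundary terms vanishing, and interpreting $\dif|Z-Z'|(r)$ as the Stieltjes differential of the BV function $r\mapsto|Z(r)-Z'(r)|$) rewrite the right-hand side as $\int_{0}^{t}\phi(t-r)\,\espi{\P}{|Z(r)-Z'(r)|}\dif r$. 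Setting $h(t):=\espi{\P}{|Z(t)-Z'(t)|}$, this produces the Volterra inequality $h(t)\le\int_{0}^{t}\phi(t-r)\,h(r)\dif r$. With $\rho:=\int_{0}^{\infty}\phi(s)\dif s<1$, taking suprema on any $[0,T]$ gives $\sup_{s\le T}h(s)\le\rho\sup_{s\le T}h(s)$ and forces $h\equiv 0$. Finiteness of $h$ required for this step comes from a preliminary a priori bound: applying \eqref{eq:LipschitzCondition} with $\eta\equiv 0$ yields $\dot y(Z,t)\le \dot y(0,t)+\int_{0}^{t}\phi(t-s)\dif Z(s)$, and a first Volterra-Gronwall step gives $\espi{\P}{Z(t)}\le (1-\rho)^{-1}\int_{0}^{t}\dot y(0,s)\dif s<\infty$. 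Since two counting processes with identical one-dimensional marginals must coincide pathwise, $Z=Z'$ $\P$-a.s.

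Strong existence then follows from Theorem~\ref{thm_hawkes_core:YW}: pathwise uniqueness combined with the weak existence produces a measurable functional $F\colon\NN\to\NN$ with $Z=F(N)$ $\P$-a.s., which is the desired strong solution. The main technical subtlety lies in the integration-by-parts step converting $\int_{0}^{t}\Phi(t-r)\dif|Z-Z'|(r)$ into $\int_{0}^{t}\phi(t-r)|Z(r)-Z'(r)|\dif r$: the interpretation of the measure $\dif|Z-Z'|(r)$ as the Stieltjes differential of $r\mapsto|Z(r)-Z'(r)|$ is crucial, since otherwise one would close the Volterra inequality on the much larger expected symmetric-difference count of~$Z$ and~$Z'$ (of order $\espi{\P}{Z(t)+Z'(t)}$), and no contraction would be available.
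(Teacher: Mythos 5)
Your overall architecture (weak existence from Theorem~\ref{thm_hawkes_brouillon:weakExistence}, pathwise uniqueness, then Yamada--Watanabe via Theorem~\ref{thm_hawkes_core:YW}) matches the paper, but the pathwise-uniqueness step is handled very differently and your version has a genuine gap.

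The paper does not run a Gronwall/Volterra argument at all. It observes that for any solution $X$ of~\eqref{eq_hawkes_core:26}, identity~\eqref{eq_hawkes_core:6} determines the jump times of $X$ recursively from those of $N$: since $y$ is predictable, $y\bigl(\sum_{j<q}\epsilon_{T_j(X)},\,T_q(X)\bigr)=T_q(N)$, and since $\dot y>0$ the map $t\mapsto y(\omega,t)$ is a strictly increasing homeomorphism for each fixed $\omega$. Hence $T_1(X)$, then $T_2(X)$, etc., are pinned down one by one, and an induction gives $T_q(X)=T_q(Y)$ for all $q$. This argument uses only positivity and predictability of $\dot y$; it does not invoke~\eqref{eq:LipschitzCondition} or the contraction hypothesis $\int\phi<1$ at all.

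Your contraction route has a concrete obstruction that you yourself identify but do not resolve. If $\dif|Z-Z'|(r)$ is read as the Stieltjes differential of the (generally non-monotone) BV function $r\mapsto|Z(r)-Z'(r)|$, then the right-hand side of~\eqref{eq:LipschitzCondition} is the integral of a non-negative kernel against a \emph{signed} measure, which can be negative, so the stated inequality cannot hold under that interpretation. If instead one takes the standard reading (the atoms of $\dif|Z-Z'|$ sit at the jump times of $Z$ and $Z'$ that do not cancel, i.e.\ the variation measure of $Z-Z'$), then the integration by parts produces the expected total variation of $Z-Z'$ on $[0,t]$, which dominates $\espi{\P}{|Z(t)-Z'(t)|}$ but is \emph{not} dominated by $\sup_{r\le t}\espi{\P}{|Z(r)-Z'(r)|}$; the Volterra inequality does not close, precisely because the variation count and the net difference are not comparable without a pathwise ordering of $Z$ and $Z'$ that is not available a priori. (Your a~priori moment bound $\espi{\P}{Z(t)}<\infty$ via $(1-\rho)^{-1}$ is fine, but it does not repair this.) Replacing the Gronwall step by the paper's direct induction on jump times both closes the gap and reveals that the Lipschitz hypothesis is not what drives uniqueness here.
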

\begin{remark}
  The condition \eqref{eq:LipschitzCondition} is the condition used in
  \cite{MR0166826,massoulie96} to ensure existence and uniqueness of the
  solution of the (weak) g-Hawkes problem.
\end{remark}
\begin{proof}
  The weak existence is guaranteed by
  Theorem~\ref{thm_hawkes_brouillon:weakExistence}. It remains to show the
  strong uniqueness and to conclude with Theorem~\ref{thm_hawkes_core:YW}. If
  $X$ and $Y$ are two solutions of $\hawkes$ on the same probability space, we
  have
  \begin{equation*}
    X(t)=N\bigl(y(X,t)\bigr) \text{ and } Y(t)=N\bigl(y(Y,t)\bigr).
  \end{equation*}
  Since they both are point processes, we can speak of their jump times
  $\bigl(T_{q}(X),\, q\ge 1\bigr)$ and $\bigl(T_{q}(Y),\, q\ge 1\bigr)$. We have
  \begin{equation*}
    y(\emptyset,\, T_{1}(X))=T_{1}(N) \text{ and } y(\emptyset,\, T_{1}(Y))=T_{1}(N).
  \end{equation*}
  Since $y(N,.)$ is an homeomorphism, $T_{1}(X)=T_{1}(Y)$. Suppose proved that
  \begin{equation*}
    T_{j}(X)=T_{j}(Y) \text{ for }j=1,\cdots,q-1.
  \end{equation*}
  Since $y$ is predictable
  \begin{align*}
    y\bigl(\sum_{j=1}^{q-1}\epsilon_{T_{j}(X)},\, T_{q}(X)\bigr)=T_{q}(N)= y\bigl(\sum_{j=1}^{q-1}\epsilon_{T_{j}(X)},\, T_{q}(Y)\bigr).
  \end{align*}
  Hence $T_{q}(X)=T_{q}(Y)$ and then $X=Y$. Thus the strong uniqueness holds.
\end{proof}

\begin{remark}
  Actually, we have used in this proof the construction inherited from Algorithm
  7.4.III of \cite{Daley2003} to simulate a g-Hawkes process. In view of
  \eqref{eq_hawkes_brouillon:7} and \eqref{eq_hawkes_brouillon:sautsDeY},
  $T_{1}(Z)$ is the solution of the equation
  \begin{equation*}
    y\left(\emptyset, T_{1}(Z) \right)=T_{1}(N).
  \end{equation*}
  In turn, $T_{2}(Z)$ solves
  \begin{equation*}
    y\left( \epsilon_{T_{1}(Z)},\, T_{2}(Z) \right)=T_{2}(N).
  \end{equation*}
  More generally, $T_{k}(Z)$ is given by
  \begin{equation*}
    y\left( \sum_{j=1}^{k-1}\epsilon_{T_{j}(Z)},\, T_{k}(Z) \right)=T_{k}(N).
  \end{equation*}
  The usual algorithm which is based on the thinning of a Poisson measure gives
  the sample path of $Z$ up to a given time and suffers from the rejection of a
  possibly large number of points. This algorithm yields the number of jumps we
  desire and may suffer from numerical instability in the root findings.
\end{remark}

We can now retrieve the classical existence result for classical Hawkes
processes \cite{hawkes74,massoulie96}.
\begin{corollary}
  Let $\dot y $ be a predictable process which satisfies the following
  hypothesis
  \begin{equation*}
    \dot y(N,t)=\varphi\left( \alpha+\int_{0}^{t} h(t-s)\dif N(s) \right)
  \end{equation*}
  where $\alpha>0$, $\varphi$ is a Lebesgue a.s. positive Lipschitz function and
  $h$ is a non-negative measurable function such that
  \begin{equation}\label{eq_hawkes_brouillon:contration}
    \|\varphi\|_{\text{Lip}}   \,\int_{0}^{\infty} h(t) \dif t <1.
  \end{equation}
  Then, there exists a unique strong solution to $\hawkes$.
\end{corollary}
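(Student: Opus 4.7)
The strategy is to invoke Theorem~\ref{thm_hawkes_brouillon:strongExistenceClassicalHawkes} with the choice $\phi(s) = \|\varphi\|_{\text{Lip}}\, h(s)$. First I would verify the Lipschitz-type pathwise condition \eqref{eq:LipschitzCondition}. For any two configurations $\omega, \eta \in \NN$ and any $t \ge 0$, the Lipschitz property of $\varphi$ together with the linearity of $\int h(t-\cdot)\dif \cdot$ gives
\begin{equation*}
  |\dot y(\omega,t) - \dot y(\eta,t)| \le \|\varphi\|_{\text{Lip}}\, \left|\int_0^t h(t-s) \dif(\omega-\eta)(s)\right| \le \|\varphi\|_{\text{Lip}} \int_0^t h(t-s) \dif|\omega-\eta|(s),
\end{equation*}
which is exactly \eqref{eq:LipschitzCondition} with $\phi = \|\varphi\|_{\text{Lip}}\, h$. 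The contraction hypothesis \eqref{eq_hawkes_brouillon:contration} then becomes $\int_0^\infty \phi(s) \dif s < 1$, exactly as required.

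Next I would verify that $\dot y \in \pred^{++}_2(\mathcal{N}, \pi_y)$. The $\mathcal{N}$-predictability of $\dot y$ follows from the fact that $N$ has a.s. only finitely many jumps on each compact interval, so $t \mapsto \int_0^{t-} h(t-s)\dif N(s)$ is left-continuous and $\mathcal{N}$-adapted, hence predictable, and composition with the measurable map $\varphi$ preserves predictability. Almost-sure strict positivity of $\dot y(N,s)$ for Lebesgue-a.e. $s$, $\pi_y$-a.s., follows from $\alpha > 0$, $h \ge 0$ and the Lebesgue-a.e. positivity of $\varphi$. For the non-explosion $y(N,t)<\infty$ and $y(N,t) \to \infty$ under $\pi_y$, together with the Hellinger-type condition \eqref{eq_hawkes_brouillon:24}, I would adapt the classical estimates of \cite{massoulie96}: taking expectations under $\pi_y$ in $\espi{\pi_y}{\dot y(N,t)} \le \varphi(\alpha) + \|\varphi\|_{\text{Lip}}\int_0^t h(t-s)\espi{\pi_y}{\dif N(s)}$ and applying a Gr\"onwall-type argument in conjunction with \eqref{eq_hawkes_brouillon:contration} yields linear-in-$t$ control of $\espi{\pi_y}{N(t)}$, so that $\dot y(N,\cdot)$ is locally bounded $\pi_y$-a.s., which ensures finiteness of $y(N,t)$ on each compact interval and, via the bound $(1-\sqrt{x})^2 \le |x-1|$, reduces \eqref{eq_hawkes_brouillon:24} to a finite integral on each finite horizon.

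With both hypotheses of Theorem~\ref{thm_hawkes_brouillon:strongExistenceClassicalHawkes} in place, the theorem delivers at once the strong existence and path-wise uniqueness of a solution to $\hawkes$, which is exactly a classical Hawkes process with stochastic intensity $\varphi\bigl(\alpha + \int_0^t h(t-s)\dif Z(s)\bigr)$, now obtained intrinsically on $(\NN,\pi)$ without the auxiliary marked Poisson measure used in \cite{hawkes74,massoulie96}. The main obstacle is the verification of $\dot y \in \pred^{++}_2(\mathcal{N},\pi_y)$, and specifically the non-explosion under $\pi_y$: the existence of $\pi_y$ as a bona fide probability rather than a sub-probability is precisely what the contraction hypothesis \eqref{eq_hawkes_brouillon:contration} secures, by preventing the self-exciting term from cascading the intensity to infinity.
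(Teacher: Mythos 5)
Your verification of the Lipschitz condition \eqref{eq:LipschitzCondition} with $\phi = \|\varphi\|_{\text{Lip}}\, h$ is correct and matches the paper, and your Fubini/contraction estimate giving control of $\espi{\pi_y}{N(T)}$ is essentially the same computation the authors carry out. However, there is a real gap in the membership claim $\dot y \in \pred^{++}_2(\mathcal{N},\pi_y)$, which is the hypothesis of Theorem~\ref{thm_hawkes_brouillon:strongExistenceClassicalHawkes} you want to invoke. By definition, $\pred^{++}_2$ requires
\begin{equation*}
  \pi_y\left(\int_{0}^{\infty}\bigl(1-\sqrt{\dot y(N,s)}\bigr)^{2}\dif s < \infty\right) = 1,
\end{equation*}
i.e.\ integrability on the \emph{infinite} horizon, and your argument only delivers, as you yourself write, ``a finite integral on each finite horizon.'' The two are not equivalent here: unless $\varphi(\alpha)$ happens to be close to $1$, the process $\dot y(N,\cdot)$ has no reason to approach $1$ at infinity, so $\int_0^\infty(1-\sqrt{\dot y})^2\dif s$ will generically diverge and the process does not belong to $\pred^{++}_2(\mathcal{N},\pi_y)$ at all.

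The paper circumvents this precisely because of that obstacle. It replaces $\dot y$ by the truncated process $\dot y_T$ that agrees with $\dot y$ on $[0,T]$ and is identically $1$ on $(T,\infty)$, which trivially lies in $\pred^{++}_2$ once the finite-horizon estimate (your Gr\"onwall/Fubini step, together with $(1-\sqrt{1+x})^2 \le x^2\car_{\{|x|\le 1\}} + x\car_{\{|x|>1\}}$) is established. It then applies Theorem~\ref{thm_hawkes_brouillon:strongExistenceClassicalHawkes} to each $\dot y_T$, obtaining a solution $Y_T$ on $[0,T]$, and uses the predictability of $y$ together with pathwise uniqueness to show that the $Y_T$ are consistent, so that they glue into a global solution $Y(r) := Y_r(r)$. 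This truncation-and-gluing step is not cosmetic; it is the mechanism by which one passes from finite-horizon control to a global strong solution, and it is missing from your argument. Without it, the appeal to the theorem is not justified.
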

\begin{proof}
  Let $T>0$ and
  \begin{equation*}
    \dot y_{T}(t)=
    \begin{cases}
      \dot y(N,s) & \text{ if }s\le T \\
      1           & \text{ if } s>T.
    \end{cases}
  \end{equation*}
  We first prove that there exists a solution to $\mathfrak H_{y_{T}}$. We have
  to prove that
  \begin{equation}\label{eq_hawkes_core:10}
    \pi_{y}\left( \int_{0}^{T}\left( 1-\sqrt{\dot y(N,s)} \right)^{2}\dif s <\infty\right)=1.
  \end{equation}
  It is easily seen that for all $x\ge -1$, we have
  \begin{equation*}
    \left(  \sqrt{1+x}-1 \right)^{2}\le  x^{2}\car_{\{|x|\le 1\}}+x\car_{\{|x|> 1\}} .
  \end{equation*}
  Since $\varphi$ is supposed to be Lipschitz continuous, we get
  \begin{multline*}
    \int_{0}^{T}\left( 1-\sqrt{\dot y(N,s)} \right)^{2}\dif s  \le T+\int_{0}^{T}\left|  \dot y(N,s)-1\right|\dif s                                                                          \\
    \le (1+|\varphi(0)-1|+\alpha\|\varphi\|_{\text{Lip}})T+\|\varphi\|_{\text{Lip}}\int_{0}^{T}\int_{0}^{s}h(s-u)\dif N(u)\dif s.
  \end{multline*}
  Thus, proving \eqref{eq_hawkes_core:10} amounts to prove that
  \begin{equation*}
    \pi_{y}\left(\int_{0}^{T} \int_{0}^{s}h(s-u)\dif N(u)\dif s<\infty  \right)=1.
  \end{equation*}
  Denote by $H$ the first quadrature of $h$:
  \begin{equation*}
    H(t)=\int_{0}^{t}h(s)\dif s.
  \end{equation*}
  By Fubini's Theorem, we have
  \begin{equation*}
    \int_{0}^{T} \int_{0}^{s}h(s-u)\dif N(u)\dif s=\int_{0}^{T}H(T-s)\dif N(s)\le H(T)N(T).
  \end{equation*}
  The same argument shows that
  \begin{equation*}
    \espi{\pi_{y}}{N(T)}\le cT+\|\varphi\|_{\text{Lip}}H(T)\,\espi{\pi_{y}}{N(T)}.
  \end{equation*}
  In view of \eqref{eq_hawkes_brouillon:contration}, this induces that
  $\espi{\pi_{y}}{N(T)}$ is finite and thus that \eqref{eq_hawkes_core:10}
  holds. It is clear that $\dot y$ so defined satisfies
  \eqref{eq:LipschitzCondition} and then, Theorem
  \ref{thm_hawkes_brouillon:strongExistenceClassicalHawkes} ensures the
  existence of the Hawkes process associated to $y$ on $[0,T]$ for any $T>0$. We
  denote this process by $Y_{T}$. Since $y$ is predictable, for all
  $r\le T\le S$,
  \begin{align*}
    Y_{T}(r) & =N\left( y_{T}(Y_{T},r) \right)  \\
             & =N\left( y_{S}(Y_{T},r) \right).
  \end{align*}
  By uniqueness, this means that $Y_{T}$ and $Y_{S}$ coincide on $[0,T]$. We can
  then define
  \begin{equation*}
    Y(r)=Y_{r}(r),
  \end{equation*}
  which satisfies
  \begin{equation*}
    Y(r)=N\left( y_{r}(Y(r),r) \right)=N\left( y(Y(r),r) \right)
  \end{equation*}
  by the very definition of $\dot y_{r}$. We have thus proved the existence of a
  strong solution to the Hawkes problem associated to $y$. The uniqueness comes
  from the Lipschitz condition \eqref{eq:LipschitzCondition} as in the proof of
  Theorem \ref{thm_hawkes_brouillon:strongExistenceClassicalHawkes}.
\end{proof}

\bibliography{hawkes_biblio} \bibliographystyle{amsplain}
\end{document}